\newtheorem{thm}{Theorem}[section]
\newtheorem{lem}[thm]{Lemma}
\newtheorem{prop}[thm]{Proposition}
\newtheorem{cor}[thm]{Corollary}
\newtheorem{de}[thm]{Definition} 
\newcommand{\mb}{\mathbb}
\def \t{\theta}
\def \p{\phi}
\def \o{\omega}
\numberwithin{equation}{section}
\begin{document}

\subjclass[2000]{Primary: 37D45, 37C40}
\keywords{Chaotic behavior, positive entropy, compact random sets, random attractors,
random dynamical systems, stochastic partial differential equations.}

\thanks{This work is partially supported by grants from NSFC (11225105, 11331007, 11431012) and NSF (1413603)}


\author{Wen Huang} \address[Wen Huang] {School of Mathematics, Sichuan University, Chengdu 610064, Sichuan, P. R. China\\ and
 School of Mathematical Sciences\\
    University of Science and Technology of China \\Hefei 230026,
Anhui, P. R. China}
\email[W.~Huang]{wenh@mail.ustc.edu.cn}

\author{Kening Lu} \address[Kening Lu] {Department of Mathematics\\
 Brigham Young University\\
 Provo, Utah 84602, USA}
\email[k.~Lu]{klu@math.byu.edu}

\title[Entropy, Chaos and  weak Horseshoe for Infinite Dimensional Random Dynamical Systems]{Entropy, Chaos and weak Horseshoe
for Infinite Dimensional Random Dynamical Systems}

\pagestyle{plain}

\begin{abstract} {In this paper, we study the complicated dynamics of infinite dimensional random dynamical systems which include deterministic dynamical systems as their special cases in a Polish space. Without assuming any hyperbolicity, we proved if a continuous random map has a positive topological entropy,  then it contains a  topological horseshoe. We also show that the positive topological entropy implies the chaos in the sense of Li-Yorke. The complicated behavior exhibiting here is induced by the positive entropy but not the randomness of the system.
} \end{abstract}

\maketitle

\section{Intoduction}

Entropy plays an important role in the study of behavior of dynamical systems. It measures the rate of increase in dynamical complexity as the system evolves with time. The measure-theoretic entropy was introduced in 1950's by Kolmogorov \cite{Kol} and Sinai \cite{Sinai2} for studying measurable dynamical systems. Sinai \cite{Sinai} studied an ergodic measure preserving automorphism  $f$ of a Lebesgue space $(X, \mu)$  and proved that if the measure-theoretic entropy of $f$ is positive, then $f$ contains factor automorphisms  which are isomorphic to Bernoulli shifts.

The topological entropy was first introduced in 1965 by Adler, Konheim and McAndrew for studying dynamical systems in topological spaces. In metric spaces a different definition of topological entropy was introduced by Bowen in 1971 and independently Dinaburg in 1970. A fundamental problem is to  characterize the chaotic behavior of orbits of a $C^k$ ($k\geq 0$) dynamical  system $f$ topologically or geometrically (in terms of horseshoe) in the presence of positive topological entropy.

In his remarkable paper \cite{K}, A. Katok proved that for a measure preserving hyperbolic $C^2$ diffeomorphism on a compact Riemannian manifold, the positive entropy implies the existence of a Smale horseshoe.

Without assuming any hyperbolicity, Blanchard, Glasner, Kolyada, and Maass \cite{BGKM}
showed that for a  homeomorphism on a compact metric space $X$, the positive topological entropy implies the chaos in the sense of Li-Yorke \cite{LY},
i.e., there is a subset $S$ of $X$, which is a union of countably many Cantor sets, and $\kappa>0$ such that
for every pair $x_1,x_2$ of distinct points in $S$, the following holds.
\begin{align*} &\liminf_{n\rightarrow +\infty} d(f^n(x_1),f^n(x_2))=0, \\
&\limsup_{n\rightarrow +\infty} d(f^n(x_1),f^n(x_2))\ge \kappa.
\end{align*}
However, this result does not yield the existence of a horseshoe.

Recently, Lian and Young obtained remarkable results on the implication of positive entropy for infinite dimensional deterministic dynamical systems. In \cite{LianY1}, they extended Katok's results to  $C^2$ differentiable maps with a nonuniformly hyperbolic compact invariant set supported by an invariant measure in a separable Hilbert space. In their second paper \cite{LianY2}, Lian and Young  went much further and studied a $C^2$ semiflow  in a Hilbert space and proved that if it has a nonuniformly hyperbolic compact invariant set supported by an invariant measure, then the positive entropy implies the existence of horseshoes. In this case, the semiflow may have one simple zero Lyapunov exponent, which implies that the associated time-one map restricted to this invariant set is partially hyperbolic with one-dimensional center direction. This result is new even for flow generated by ordinary differential equations.

The proofs of the results obtained by Katok and Lian-Young rely on not only the positive of entropy but also the hyperbolic geometric structures of systems. The horseshoes are constructed by using stable and unstable manifolds.

\smallskip

In present paper, we study $C^0$ infinite dimensional random dynamical systems which include deterministic dynamical systems as their special cases in a Polish space. Without assuming any hyperbolicity, we proved if a continuous random map has a positive topological entropy,  then it contains a  topological horseshoe. We also show that the positive topological entropy implies the chaos in the sense of Li-Yorke. The complicated behavior exhibiting here is induced by the positive entropy but not the randomness of the system.

Since there is no any hyperbolic geometric structure available, we take a different approach. We use Rohlin's theory of Lebesques systems as a basic tool and utilize the disintegration of measures, Pinsker algebra, entropy, and ergodic theory. To overcome the obstacle due to lack of hyperbolicity, we construct an ``independent'' partition with an equal conditional probability measure $\bar \mu_y$ for almost all $y$ via the disintegration of a measure relative to a factor. This partition is the key for constructing the horseshoe. Other challenges are: (i) the infinite dimensional dynamical systems generated, for example, by parabolic PDEs are not invertible and (ii) the phase space is not
locally compact.

\smallskip

Let $(\Omega, \mathcal{F}, P)$ be a probability space
and $(\theta^n)_{n\in \mathbb{Z}}$ be a measurable ${P}$-measure
preserving dynamical system on $\Omega$. A discrete time
random dynamical system (or a cocycle) on a metric space $X$ over
the dynamical system ${\theta}^n$ is a measurable map
$$\phi(n,\cdot,\cdot):\Omega \times X \to X, \quad (\omega, x) \mapsto\phi(n, \omega, x), \quad\text{for } n\in\mathbb{Z}^+$$
such that the map $\phi(n,\omega):=\phi(n,\omega,\cdot)$ forms a
cocycle over $\theta^n$:
\[
\phi(0, \omega)=Id, \quad \hbox{ for all }\; \omega \in \Omega,
\]
\[
\phi(n+m,\omega)=\phi(n,\theta^{m}\omega)\phi(m,\omega), \quad
\hbox{ for all }\; m, n \in \mathbb{Z}^+, \quad\omega \in \Omega.
\]
When $\phi(n, \omega, \cdot): X \to X$ is
continuous, $\phi(n, \omega, x)$ is called a continuous random
dynamical system. Replacing $\mathbb{Z}$ and $\mathbb{Z}^+$ by $\mathbb{R}$ and $\mathbb{R}^+$ repectively gives a continuous time random dynamical system, see Section 2 for details. \smallskip

\smallskip
A typical example of random dynamical systems is the solution operator for a stochastic
differential equation:
\[
d x_t= f_0(x_t) dt + \sum_{k=1}^d f_k(x_t) \circ dB^k_t
\]
where $x\in \mathbb{R}^d$, $f_k, 0\leq k\leq d$, are smooth vector
fields, and $B_t=(B_t^1, \cdots, B_t^d)$ is  the standard
d-dimensional Brownian motion defined on the probability space
$(\Omega, \mathcal{F}, {P})$ and $d B^k_t$ is the
Stratonovich differential. Here, $(\Omega, \mathcal{F},
\mathbb{P})$ is the classic Wiener space, i.e., $\Omega =\{
\omega\,:\, \omega(\cdot) \in C(\mathbb{R}, \mathbb{R}^d),
\omega(0)=0\}$ endowed with the open compact topology so that $\Omega$ is a Polish space and
$\mathbb{P}$ is the Wiener measure. Define a measurable dynamical
system $\theta^t$ on the probability space
$(\Omega, \mathcal{F}, {P})$ by the Wiener shift
$(\theta^t\omega)(\cdot)=\omega(t+\cdot)-\omega(t)$ for $t>0$.  It
is well-known that ${P}$ is invariant and ergodic under
$\theta^t$.
This
measurable dynamical system $\theta^t$ is also
called a metric dynamical system. It models
the noise of the system.

\smallskip

We consider a random set  ${K}\in {\mathcal F}\otimes\mathcal B(X)$ and  use  $K(\o)$ to denote its $\o$-section $\{x\in X\;|\;( \o,x)\in K\}$. $K$ is said to be invariant under $\phi(n,\omega)(x)$ if
for all $n\in \mb Z^+$
\[\p(n, \o)K(\o)\subset K( \t^n \o)\quad  P-a.s..\]
Examples of  such random invariant sets in applications are the global random attractors of dissipative stochastic partial differential equations.



\smallskip

We study the complicated dynamics of infinite dimensional random dynamical systems restricted to random invariant sets.  We assume that $(\Omega, \mathcal{F},P, \theta)$ is a Polish system (see Section 3) and the phase space $X$ is a Polish space with the distance function $d$. We consider a continuous random dynamical system $\phi(n, \omega, x)$ and write the time-one map $\phi(1, \omega, x)$ as  $\phi(\omega)(x):=\phi(1, \omega,
x)$. Then $\phi(\omega)$ is the so-called random map.  This random map generates the random dynamical system:
\[
\phi(n, \omega, x)=
\begin{cases} \phi(\theta^{n-1}\omega)\cdots
\phi(\omega)(x), & n >0, \\
I, & n=0. \\
\end{cases}
\]
Our main result can be stated as follows.

\medskip
\noindent {\bf Main Theorem.}  {\it  Let $\phi$ be an injective continuous random dynamical system on a Polish space $X$  over an ergodic Polish system $(\Omega, \mathcal{F},P,\theta)$. Let ${K}$ be a random $\phi$-invariant set with compact $\omega$-section ${K}(\omega)$ such that $\phi(\omega)(K(\omega))=K(\theta \omega)$.    If the topological  entropy is positive, i.e.,
\[h_{\text{top}}(\phi,\mathcal{K})>0,
\]
then
\begin{itemize}
\item[(i)] the dynamics of $\phi$ restricted to ${K}$ is chaotic;
\item[(ii)] $\phi$ restricted to ${K}$ has a weak horseshoe of two symbols;
\item[(iii)] in addition, if $(\Omega, \mathcal{F}, {P},\theta)$ is a
compact metric system and $K(\omega),\,\omega\in\Omega$ is  a strongly compact random set, then $\phi$ restricted to ${K}$ has a full horseshoe of two symbols.
\end{itemize}
}

\smallskip

 {$\phi$ has  a weak horseshoe of two symbols if there exist  subsets $U_1,U_2$ of $X$ such that the following
properties hold
\begin{enumerate}
\item $U_1$ and $U_2$ are non-empty, bounded,  and closed  and
$d(U_1,U_2)>0$.

\item there is a constant $b>0$ satisfying for $P$-a.e. $\omega\in \Omega$,
there exists $M_{b,\omega}\in \mathbb{N}$ such that for any natural
number $m\ge M_{b,\omega}$, there is a subset $J_m\subset
\{0,1,2,\cdots,m-1\}$ with $|J_m|\ge bm$ (positive density), and for any $s\in
\{1,2\}^{J_m}$, there exists $x_s\in K(\omega)$ with
$\phi(j,\omega,x_s)\in U_{s(j)}\cap K(\theta^j \omega)$ for any
$j\in J_m$.
\end{enumerate}}

\smallskip
By a {\it full horseshoe of two symbols} we mean that there exist  subsets $U_1,U_2$ of $X$  such that the following
properties hold
\begin{enumerate}
\item $U_1$ and $U_2$ are non-empty, bounded, and closed subsets of $X$  and
$d(U_1,U_2)>0$.

\item  there is  a constant $b>0$ satisfying for $P$-a.e. $\omega\in \Omega$,
there exists $J(\omega)\subset  \mathbb{N}_0$ such that the limit
$\lim_{m\rightarrow +\infty}\frac{1}{m}|J(\omega)\cap \{0,1,2,\cdots, m-1\}|$ exists and is larger than or equal to $b$
(positive density), and for any $s\in
\{1,2\}^{J(\omega)}$, there exists $x_s\in K(\omega)$ with
$\phi(j,\omega,x_s)\in U_{s(j)}\cap K(\theta^j \omega)$ for any
$j\in J(\omega)$.
\end{enumerate}
\smallskip
The horseshoe here is an extension of Smale's horseshoe. The main difference is that the time spent by  the orbit $\phi(j,\omega,x_s)$  bouncing between $U_1$ and $U_2$ is nonuniform.

\smallskip
We point out that the random dynamical systems $\phi$ generated by both random parabolic PDEs and random wave equations are continuous and injective.

\smallskip

For deterministic dynamical systems, we have the following result ( see \cite{HY} for topological dynamical systems in  compact metric spaces and \cite{KL-2006} for $C^*$-dynamics).
\smallskip

\noindent {\bf Corollary.}  {\it  Let $f$ be an injective continuous map on a Polish space $X$.  Let ${K}$ be a compact invariant set of $f$.    If the topological  entropy is positive, i.e.,
\[h_{\text{top}}(f,\mathcal{K})>0,
\]
then $f|_K$ has a full horseshoe of two symbols.}

\medskip

Random dynamical systems arise in the modelling of many phenomena in physics, biology, climatology, economics, etc. when uncertainties or random influences, called noises, are taken into account. These random effects are not only introduced to compensate for the defects in some deterministic models, but also are often rather intrinsic phenomena. The need for studying random dynamical systems was pointed out by Ulam and von Neumann \cite{UvN} in 1945. It has flourished since the 1980's due to the discovery that stochastic ordinary differential equations generate finite dimensional random dynamical systems through the efforts of Harris, Elworthy, Baxendale, Bismut, Ikeda, Kunita, Watanabe, and others. Random dynamical systems are nonuniform in nature in terms of hyperbolicity. There is an extensive literature on the nonuniformly hyperbolic theory and the ergodic theory for both independent and identically distributed random transformations and stationary random dynamical systems, which we refer to Arnold \cite{A}, Kifer \cite{K86, Kifer88, K01},  Ledrappier and Young \cite{LeY1, LeY2},  Liu and Qian \cite{LQ}, Liu \cite{L-p1}, Kifer and Liu \cite{KL},  and the references therein.


\smallskip

 The study of infinite dimensional random dynamical systems was initiated by Ruelle in \cite{Ruelle1, Ruelle2}  where the Oseledets' multiplicative ergodic theorem and Pesin's stable manifold theorem were established in a Hilbert space, and the notion of random attractor was introduced. Infinite dimensional random dynamical systems are usually  generated by stochastic partial differential equations (SPDEs) and contain randomness in many ways, such as stochastic forcing, uncertain parameters, random sources or inputs, and random initial and boundary conditions.  There is a vast amount of works on the ergodic theory, the existence of random attractors  and the theory of invariant manifold. We do not attempt to give an exhaustive list of references. Results on ergodic theory can be found in Mane \cite{Mane}, Thieullen \cite{Thieu}, Schauml\"offel and Flandoli \cite{SchFla}, Da Prato and Zabczyk \cite{DaPZab96}, E, Khanin, Mazel, and Sinai \cite{EKMS}, Hairer and Mattingly \cite{HM}, Lian and Lu \cite{LLu}. For the existence of random attractors we refer to    Crauel, Debussche, and Flandoli \cite{CDF},
 Crauel and Flandoli \cite{cr-fl}, Schmalfuss \cite{Schm97c}, and Bates, Lu, and Wang \cite{BLW}. Theory of random invariant manifolds can be found in Da Prato and Debussche \cite{DaPDeb96}, Mohammed and Scheutzow \cite{MoS},
 Duan, Lu and Schmalfuss \cite{DLS}, and Mohammed, Zhang, and Zhao \cite{MZZ}. The problem we study here is about the complicated dynamical behavior on random invariant sets such random attractors.


\smallskip

We organize the paper as follows. In Section 2, we introduce basic concepts concerning random
dynamical systems and random invariant sets and state  our main results. In Section 3, we review some of basic concepts and results from measurable dynamical systems and introduce some basic lemmas. The proof of the main result is given in Section 4 and Section 5.

\section{{Statement of Results}}
In this section, we first review some of the basic concepts
on RDS, which are taken from Arnold \cite{A}. Then we state our main results.

\subsection{Random Dynamical Systems}

Let $(\Omega, \mathcal{F}, {P})$ be a
probability space and $\mathbb{T}$ denote one of the sets: $\mathbb{R}$, $\mathbb{R}^+$, $\mathbb{Z}$, and $\mathbb{Z}^+$. $\mathbb{T}$ is endowed with its Borel $\sigma$-algebra $\mathcal{B}(\mathbb{T})$.
Let $\theta=(\theta^t)_{t\in \mathbb{R}}$ be a
measurable ${P}$-measure preserving flow on
$\Omega$, see Arnold \cite{A}.  $(\Omega, \mathcal{F}, {P}, \theta^t)$  is called a metric
dynamical system over the probability space $(\Omega, \mathcal{F}, {P})$. This metric dynamical system models the evolution of noise of the system. For the discrete time metric dynamical system, we replace $\mathbb{R}$ by $\mathbb{Z}$.
\smallskip

As an example, we take $(\Omega, \mathcal{F}, {P})$ to be  the Wiener space, i.e.,  $\Omega =\{
\omega\,:\, \omega(\cdot) \in C(\mathbb{R}, U),
\omega(0)=0\}$ for some separable Hilbert space $U$ endowed with the  compact open topology, $\mathcal{F}=\mathcal{B}(\Omega)$ and
${P}$ is the Wiener measure for a trace class covariance operator $Q$ on $U$. In fact, $\Omega$ is a Polish space.
We define a measurable flow $\theta^t$ on the probability space
$(\Omega, \mathcal{F}, \mathbb{P})$ by the Wiener shift
$(\theta^t\omega)(\cdot)=\omega(t+\cdot)-\omega(t)$ for $t\in\mathbb{R}$.  It is well-known that ${P}$ is invariant and ergodic under
$\theta^t$.


\smallskip

A random dynamical system in a metric space $X$  over the metric dynamical system
$(\Omega, \mathcal{F}, {P}, \theta^t)$   is a measurable map
$$
\phi: \mathbb{T} \times \Omega \times X \to X, \quad (t, \omega,
x) \mapsto \phi(t, \omega, x),
$$
and $\phi(t, \omega)(x):=\phi(t, \omega, x)$ forms a cocycle:
\[
\phi(0, \omega)=Id, \quad \hbox{ for all }\; \omega \in \Omega,
\]
\[
\phi(t+s,\omega)=\phi(t,\theta^{s}\omega)\circ \phi(s,\omega),
\quad \hbox{ for all }\; s, t \in \mathbb{T}, \quad\omega \in
\Omega.
\]
When $\phi(t, \omega, \cdot):X \to X$ is
continuous, $\phi$ is called a continuous random dynamical system.

\smallskip
As we mentioned in the introduction, a typical example in finite dimensional space  is the solution operator of stochastic differential equations.

\smallskip
 An infinite dimensional random dynamical system can be generated by the solutions of partial differential equations driven by a stochastic process of the form
\[
u_t = \Delta u+ f(u, x, \theta_t \omega), \quad x\in U
\] with the Dirichlet boundary condition or the Neumann boundary,  where $U \subset \mathbb{R}^n$ is a bounded region with a smooth boundary.    It can also be generated by the solutions of stochastic partial differential equations of the form
\begin{equation*}
    du =(Au+F(u)) dt + d{W},
\end{equation*}
where $A$ is an elliptic operator, $F$ is a smooth nonlinear functional, and $d{W}$ is a white noise given as the generalized temporal
differential of a Wiener process with continuous paths in the phase space.

\smallskip
Other examples of random dynamical systems are generated by the solutions of stochastic differential equations driven by a fractional Brownian motion \cite{GLS}.

\subsection{Random Invariant Sets and Random Attractors}

We first recall that a multifunction
$M=(M(\omega))_{\omega\in\Omega}$ of nonempty closed sets
$M(\omega),\,\omega\in\Omega$, contained in $X$ is
called {\it a random set} if
\[
\omega\mapsto\inf_{y\in M(\omega)}d(x,y)
\]
is a random variable for any $x\in X$.  A random set $M$ is  invariant for random dynamical system $\phi$ if
\[
\phi(t,\omega,M(\omega))\subset M(\theta_t\omega)\quad \text{for} \quad t\geq 0.
\]
A random set $\mathcal{A}=\{\mathcal{A}(\omega)\}_{\omega \in \Omega}$
of $X$
is called {\it a global random   attractor}  for
  $\phi$
if the following  conditions are satisfied, for $\mathbb{P}$-a.e. $\omega \in \Omega$,
\begin{itemize}
\item[(i)]  $\mathcal{A}(\omega)$ is compact;

\item[(ii)] $\{\mathcal{A}(\omega)\}_{\omega \in \Omega}$ satisfies for $t\geq 0$:
\[
\phi(t, \omega, \mathcal{A}(\omega))=\mathcal{A}(\theta_t\omega);
\]

\item[(iii)] $\{\mathcal{A}(\omega)\}_{\omega \in \Omega}$
attracts  every tempered random set
 $B = \{B(\omega)\}_{\omega \in \Omega}$, that is,
$$ \lim_{t \to  \infty} d (\phi(t, \theta_{-t}\omega, B(\theta_{-t}\omega)), \mathcal{A}(\omega))=0,
$$
where $d$ is the Hausdorff semi-metric given by
$d(Y,Z) =
  \sup_{y \in Y }
\inf_{z\in  Z}  \| y-z\|_{X}
 $ for any $Y\subseteq X$ and $Z \subseteq X$.
\end{itemize}

The study of global random attractors was initiated by Ruelle \cite{Ruelle1}. The fundamental theory of global random attractors for
stochastic partial differential equations was developed by Crauel,
Debussche, and Flandoli \cite{CDF}, Crauel and Flandoli
\cite{cr-fl}, Flandoli and Schmalfuss \cite{fl-schm}, Imkeller and
Schmalfuss \cite{ imk-schm},  and others. It has recently attracted more attention due to new equations and models arising in applications such as stochastic infinite dimensional lattice dynamical systems \cite{BLL}.

Due to the unbounded fluctuations in the systems caused by the white noise, the concept
of pull-back global random attractor was introduced to capture the essential dynamics with possibly extremely wide fluctuations. This
is significantly different from the deterministic case.

\subsection{Main Result}
In this paper, we study the complicated dynamics of infinite dimensional random dynamical systems restricted to random invariant sets such as global attractors.  We assume that $(\Omega, \mathcal{F},P, \theta)$ is a Polish system (see Section 3) and  $X$ is a Polish space with the distance function $d$. We consider a continuous random dynamical system $\phi(n, \omega, x)$ generated by a random map $\phi(\omega)(x)$ defined on $X$ over the metric dynamical system $(\Omega, \mathcal{F},P, \theta)$, i.e.,
\[
\phi(n, \omega, x)=
\begin{cases} \phi(\theta^{n-1}\omega)\cdots
\phi(\omega)(x), & n >0, \\
I, & n=0. \\
\end{cases}
\] Here, $\phi(\omega):X\rightarrow X$ is
continuous almost surely.

\smallskip
We consider a $\phi$-invariant random set ${K}\in {\mathcal F}\otimes\mathcal B(X)$  with compact $\omega$-section ${K}(\omega)$.
For $\omega \in \Omega$, $\epsilon>0$  and $n\ge 1$, we define $$d^\omega_n(x,y)=\max \limits_{0\le k\le n-1}
d(\phi(k, \omega, x),\phi(k, \omega, y)), \, \text{ for }x,y\in X.$$  A
subset $E$ of $K(\omega)$ is called {\it $(\omega,n,\epsilon,\phi)$-separated subset of  $K(\omega)$} if for all
$x,y\in E,x\neq y$, one has $d^\omega_n(x,y)>\epsilon$. We denote by $r_n(K,\omega,\epsilon,\phi)$ the maximal
cardinality of all $(\omega,n,\epsilon,\phi)$-separated subset of $K_\omega$.  The {\it topological entropy}  of $(\phi,K)$ based on Bowen and Dinaburg's definition is given by
$$h_{\text{top}}(\phi,K):=\lim_{\epsilon\rightarrow 0} \limsup_{n\rightarrow +\infty} \frac{1}{n} \int_\Omega
\log r_n(K,\omega,\epsilon,\phi)  d P(\omega).$$
See,   Bogenschutz
\cite{B92} and Kifer \cite{K01}  for  related notions  in the case of $X$ being compact.

\smallskip

 Let $\phi$ be an injective continuous random dynamical system
on Polish space $X$   over an ergodic Polish system $(\Omega,
\mathcal{F}, {P},\theta)$. Let $K$ be a $\phi$-invariant random set
with compact $\omega$-section $K(\omega)$ such that
$\phi(\omega)(K(\omega))=K(\theta \omega)$. Subsets $U_1,U_2$ of $X$
is called {\it weak Horseshoe} of $(\phi,K)$, if the following
properties hold
\begin{enumerate}
\item $U_1$ and $U_2$ are non-empty, closed, and bounded subsets of $X$ and
$d(U_1,U_2)>0$.

\item there is a $b>0$ satisfying for $P$-a.e. $\omega\in \Omega$,
there exists $M_{b,\omega}\in \mathbb{N}$ such that for any natural
number $m\ge M_{b,\omega}$, there is a subset $J_m\subset
\{0,1,2,\cdots,m-1\}$ with $|J_m|\ge bm$(positive density), and for any $s\in
\{1,2\}^{J_m}$, there exists $x_s\in K(\omega)$ with
$\phi(j,\omega,x_s)\in U_{s(j)}\cap K(\theta^j \omega)$ for any
$j\in J_m$.
\end{enumerate}

The first result is on the existence of a weak horseshoe.
\begin{thm}\label{MTH1} Let $\phi$ be an injective continuous random dynamical system on Polish space $X$   over an ergodic Polish system
$(\Omega, \mathcal{F}, {P},\theta)$. Let $K$ be a $\phi$-invariant
random set with compact $\omega$-section $K(\omega)$ such that
$\phi(\omega)(K(\omega))=K(\theta \omega)$. If
$h_{\text{top}}(\phi,K)>0$, then there exists a weak Horseshoe
$\{U_1,U_2\}$ for $(\phi,K)$.
\end{thm}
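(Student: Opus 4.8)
The plan is to reduce the random topological statement to a statement about a measure-preserving system with positive measure-theoretic entropy, and then to extract the two ``horseshoe sets'' $U_1,U_2$ from a nontrivial two-set partition of positive conditional entropy, using a density (ergodic-theorem) argument to control the times $J_m$ at which the orbit is forced to visit prescribed elements of the partition. Concretely, I would proceed as follows.

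\emph{Step 1: From positive topological entropy to an invariant measure with positive entropy.} Using the variational principle for random dynamical systems on compact random sets (Kifer \cite{K01}, Bogenschutz \cite{B92}), together with the injectivity of $\phi$ and the compactness of the sections $K(\omega)$, I would produce a $\Phi$-invariant probability measure $\mu$ on the bundle $\mathcal{K}=\{(\omega,x):x\in K(\omega)\}$ with marginal $P$ on $\Omega$ whose fiber (relative) entropy $h_\mu(\Phi\mid\Omega)$ is positive; here $\Phi(\omega,x)=(\theta\omega,\phi(\omega)x)$ is the skew product. One may further assume $\mu$ is ergodic by the ergodic decomposition, since fiber entropy is affine over ergodic components and $P$-ergodicity of $\theta$ is preserved.

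\emph{Step 2: A two-set partition of positive conditional entropy relative to the Pinsker factor.} Let $\mathcal{P}_\mu$ denote the Pinsker $\sigma$-algebra of $(\mathcal{K},\mu,\Phi)$ (relative to $\Omega$). Since $h_\mu(\Phi\mid\Omega)>0$, there is a finite measurable partition $\xi$ of $\mathcal{K}$ with $h_\mu(\Phi,\xi\mid\mathcal{P}_\mu)>0$; refining and using the entropy-additivity, one reduces to a two-element partition $\xi=\{A_1,A_2\}$ still having positive conditional entropy over $\mathcal{P}_\mu$. Passing to the factor $(\mathcal{Y},\nu,\sigma)$ generated by $\mathcal{P}_\mu$ and disintegrating $\mu=\int \bar\mu_y\,d\nu(y)$, positive conditional entropy translates (via Rohlin's theory of Lebesgue spaces and the relative Kolmogorov--Sinai theorem) into the statement that under iteration the conditional measures $\bar\mu_y$ become asymptotically ``independent'' along $\xi$: for a positive-density set of times the two cells $A_1,A_2$ each carry a definite fraction of $\bar\mu_y$-mass, \emph{independently of the Pinsker data and of each other}. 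This ``independent partition with equal conditional probability $\bar\mu_y$'' is exactly the construction the introduction flags as the key new ingredient; I would isolate it as a lemma: there exist $c>0$, a density-$b$ set $J$ of integers, and for each finite $s\in\{1,2\}^{J\cap\{0,\dots,m-1\}}$ a Pinsker-fiber set of positive $\bar\mu_y$-measure of points $x$ with $\Phi^j(\omega,x)\in A_{s(j)}$ for all relevant $j$.

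\emph{Step 3: Regularization of $\xi$ into closed sets and conclusion.} By regularity of $\mu$ on the Polish bundle, approximate $A_1,A_2$ from inside by fiberwise-closed, bounded sets $\tilde U_1,\tilde U_2$ with $d(\tilde U_1(\omega),\tilde U_2(\omega))>0$ on a positive-measure set of $\omega$; shrinking $b$ slightly, the mass estimate of Step 2 survives. Then fix neighborhoods/boundedness to pass from the fiberwise sets to fixed sets $U_1,U_2\subset X$ (using compactness of the $K(\omega)$ and a measurable selection / Lusin argument to get a positive-$P$-measure, then by $\theta$-ergodicity an a.e.-$\omega$, set of base points where the construction works). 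Finally, the Birkhoff ergodic theorem applied to the frequency of the good time set, combined with the positivity of the conditional measures, shows: for $P$-a.e.\ $\omega$ there is $M_{b,\omega}$ so that for all $m\ge M_{b,\omega}$ and all $s\in\{1,2\}^{J_m}$ with $J_m\subset\{0,\dots,m-1\}$, $|J_m|\ge bm$, one may \emph{choose} $x_s\in K(\omega)$ realizing $\phi(j,\omega,x_s)\in U_{s(j)}\cap K(\theta^j\omega)$ for all $j\in J_m$ — because a set of positive $\bar\mu_y$-measure is in particular nonempty. This is precisely the definition of a weak horseshoe.

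\emph{Main obstacle.} The crux is Step 2: manufacturing the two-set partition whose forward iterates are independent \emph{uniformly in the Pinsker fiber}, with a positive-density set of independence times, from the sole hypothesis that the relative entropy is positive. Positive entropy gives asymptotic independence only in a Cesàro / along-a-subsequence sense, and one must upgrade this to a genuine positive lower density of times while keeping the conditional masses bounded below by a fixed constant $c>0$ simultaneously for \emph{all} sign patterns $s$. This requires a careful quantitative version of the Shannon--McMillan--Breiman theorem relative to the Pinsker algebra (so that the ``good'' cylinder sets in the $\xi$-name have comparable, hence bounded-below, conditional measure), plus a combinatorial pigeonhole to locate the density-$b$ index set $J_m$ on which all $2^{|J_m|}$ patterns are feasible. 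The infinite-dimensionality and non-local-compactness of $X$ do not enter this step directly — they are handled once and for all by the compactness of $K(\omega)$ and Polishness — but the non-invertibility of $\phi$ means one cannot use a two-sided Pinsker factor, so all of the entropy/independence bookkeeping must be done with one-sided (future) $\sigma$-algebras, which is the technical price paid throughout.
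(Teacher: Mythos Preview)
Your overall architecture is right, but there are two genuine gaps, and on one point you have the strategy inverted relative to what actually works.

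First, you explicitly decide to avoid the natural extension and work one-sidedly. The paper does the opposite: it passes to the natural extension $(\bar K,\bar{\mathcal B_\mu},\bar\mu,\bar\Phi)$ of the skew product and to the natural extension of $(\Omega,\mathcal B_P,P,\theta)$, checks via Abramov--Rohlin that the relative entropy is preserved (Lemma~\ref{na=zero} plus Lemma~\ref{GAR}), and only then invokes the Pinsker-factor machinery, which in the form used (Lemma~\ref{key-lem}: non-atomic disintegration and ergodicity of the relative self-product) is stated for \emph{invertible} Lebesgue systems. Working one-sidedly as you propose would require redeveloping that machinery, and there is no need to.

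Second, your Step~2/Step~3 order is backwards, and this is where the real gap lies. You build a two-set partition first and then try to ``regularize'' it into fixed closed $U_1,U_2\subset X$ with $d(U_1,U_2)>0$; that passage from bundle-measurable sets to fixed closed sets in $X$ is not routine and your sketch does not close it. The paper instead first \emph{chooses} $U_1,U_2$ as closed balls in $X$ (from a countable family) for which $\bar\mu\times_Y\bar\mu(\widetilde{U_1}\times\widetilde{U_2})>0$, using non-atomicity of $\bar\mu_y$ and ergodicity of the relative product. Then, via Rohlin's lemma on carving sets of prescribed conditional measure (Lemma~\ref{rohlin}), it builds an $r$-element partition $\alpha=\{B_1,\dots,B_r\}$ with $\bar\mu_y(B_j)=1/r$ for $\nu$-a.e.\ $y$ and with $B_i\cap\pi_1^{-1}(A)\subset\widetilde{U_i}$ for $i=1,2$ on a positive-measure base set $A$ (Lemma~\ref{lem-5.3}). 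The equal conditional masses force $H_{\bar\mu}(\alpha\mid P_{\bar\mu}(\bar\pi))=\log r$ exactly, so by Lemma~\ref{key-lem-1} one finds $\ell$ with $h_{\bar\mu}(\bar\Phi^\ell,\alpha\mid\bar\pi^{-1}\bar{\mathcal B_P})>\nu(Y\setminus A)\log r+\nu(A)\log(r-1)$. A fiberwise entropy-chaining argument (Lemma~\ref{lem-5.1} plus Birkhoff) then yields, along a positive-density set of return times to $A$, at least $((r-1)\lambda)^k$ realized $\alpha$-cylinders for some $\lambda>1$. The step you call ``combinatorial pigeonhole'' is in fact the Karpovsky--Milman/Sauer--Shelah lemma (Lemma~\ref{com-lem}): from $|S|\ge((r-1)\lambda)^k$ one extracts $I\subset\{1,\dots,k\}$ with $|I|\ge ek$ and $S|_I=\{1,\dots,r\}^I$; restricting to the first two symbols gives all $\{1,2\}^I$ patterns, and since the index set sits inside the return times to $A$, each realized cylinder lands in $\widetilde{U_1}$ or $\widetilde{U_2}$ as prescribed. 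Neither SMB nor ordinary pigeonhole suffices here; without the Sauer--Shelah step you cannot pass from ``exponentially many cylinders'' to ``\emph{every} pattern on a positive-density index set,'' which is exactly the content of the weak horseshoe.
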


Let $(\Omega, \mathcal{F}, {P},\theta)$ be a {\it compact metric system}. Namely,
$\Omega$ is a compact metric space, $\mathcal{F}$ is the Borel $\sigma$-algebra $\mathcal{B}_{\Omega}$ of $\Omega$,
$P$ is a Borel probability measure
on $\Omega$ and $\theta:\Omega\rightarrow \Omega$ is a continuous map preserving the measure $P$.

A multifunction
$M=(M(\omega))_{\omega\in\Omega}$ of nonempty closed sets
$M(\omega),\,\omega\in\Omega$, contained in $X$ is
called {\it a strongly compact random set} if the following  conditions are satisfied, for each $\omega \in \Omega$,
\begin{itemize}
\item[(i)]  $M(\omega)$ is compact.

\item[(ii)] the function
$\omega\mapsto\inf_{y\in M(\omega)}d(x,y)$
is lower semi-continuous  for any $x\in X$.
\end{itemize}
It is not hard to see that if a multifunction
$M=(M(\omega))_{\omega\in\Omega}$ of nonempty closed sets
$M(\omega),\,\omega\in\Omega$, contained in $X$ such that the set $\bigcup_{\omega\in \Omega}\{\omega\}\times M(\omega)$ is a compact subset of $\Omega\times X$, then
$M$ is strongly compact random set.

By a {\it full horseshoe of two symbols} we mean that there exist  subsets $U_1,U_2$ of $X$  such that the following
properties hold
\begin{enumerate}
\item $U_1$ and $U_2$ are non-empty, closed, and bounded subsets of $X$  and
$d(U_1,U_2)>0$.

\item  there is  a constant $b>0$ satisfying for $P$-a.e. $\omega\in \Omega$,
there exists $J(\omega)\subset  \mathbb{N}_0$ such that the limit
$\lim_{m\rightarrow +\infty}\frac{1}{m}|J(\omega)\cap \{0,1,2,\cdots, m-1\}|$ exists and is larger than or equal to $b$
(positive density), and for any $s\in
\{1,2\}^{J(\omega)}$, there exists $x_s\in K(\omega)$ with
$\phi(j,\omega,x_s)\in U_{s(j)}\cap K(\theta^j \omega)$ for any
$j\in J(\omega)$.
\end{enumerate}

The second result is the existence of a full horseshoe.

\begin{thm}\label{MTH3} Let $\phi$ be an injective continuous random dynamical system on Polish space $X$   over an ergodic compact metric system
$(\Omega, \mathcal{F}, {P},\theta)$ satisfying the map $(\omega,x)\mapsto \phi(\omega)x$ is a continuous map from $\Omega\times X$ to $X$. Let $K$ be a $\phi$-invariant
strongly compact random set such that
$\phi(\omega)(K(\omega))=K(\theta \omega)$. If
$h_{\text{top}}(\phi,K)>0$, then there exists a full Horseshoe
$\{U_1,U_2\}$ for $(\phi,K)$.
\end{thm}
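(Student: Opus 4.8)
The plan is to derive Theorem~\ref{MTH3} from the weak horseshoe of Theorem~\ref{MTH1}, using the extra compactness to replace its ``for every large $m$ there is $J_m$'' by a single density set $J(\omega)$. First apply Theorem~\ref{MTH1} to $(\phi,K)$ to get non-empty closed bounded sets $U_1,U_2$ with $d(U_1,U_2)>0$ and a constant $b>0$ forming a weak horseshoe. Since $\Omega$ is a compact metric space, $\theta$ is continuous, $(\omega,x)\mapsto\phi(\omega)x$ is continuous and $K$ is strongly compact, a short argument from lower semicontinuity of $\omega\mapsto d(x,K(\omega))$ together with compactness of the sections $K(\omega)$ shows that $\mathcal K:=\bigcup_{\omega\in\Omega}\{\omega\}\times K(\omega)$ is a compact subset of $\Omega\times X$; the skew product $\Phi(\omega,x):=(\theta\omega,\phi(\omega)x)$ is then a continuous self-map of $\mathcal K$ with continuous factor map $q:\mathcal K\to\Omega$, $q\circ\Phi=\theta\circ q$. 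Put $A_i:=\mathcal K\cap(\Omega\times U_i)$; these are disjoint compact subsets of $\mathcal K$, and ``$\phi(j,\omega,x)\in U_{s(j)}\cap K(\theta^j\omega)$'' is exactly ``$\Phi^j(\omega,x)\in A_{s(j)}$''. In this compact setting one expects $\limsup$-type statements to upgrade to genuine limits via invariant measures and the Birkhoff theorem, and this is the mechanism I would exploit.

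Next I would manufacture an invariant measure carrying the independence. Let $\Sigma=\{1,2,*\}^{\mathbb N_0}$ with the one-sided shift $\kappa$, where the symbol $*$ marks times outside the horseshoe. Using a measurable selection of the weak-horseshoe data, for $P$-a.e.\ $\omega$ and each large $m$ fix $J_m^\omega\subset\{0,\dots,m-1\}$ with $|J_m^\omega|\ge bm$ and points $x_s^\omega\in K(\omega)$, $s\in\{1,2\}^{J_m^\omega}$, realizing $s$ on $J_m^\omega$; let $r_s^\omega\in\Sigma$ agree with $s$ on $J_m^\omega$ and be $*$ elsewhere, and set
\[
\nu_m:=\int_\Omega\Big(\frac1m\,2^{-|J_m^\omega|}\sum_{s\in\{1,2\}^{J_m^\omega}}\,\sum_{j=0}^{m-1}\delta_{(\Phi^j(\omega,x_s^\omega),\,\kappa^j r_s^\omega)}\Big)\,dP(\omega),
\]
a probability measure on $\mathcal K\times\Sigma$. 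Let $\nu$ be a weak-$*$ limit along a subsequence $m\to\infty$. A telescoping estimate gives $\|(\Phi\times\kappa)_*\nu_m-\nu_m\|\le 2/m+o(1)$, so $\nu$ is $(\Phi\times\kappa)$-invariant; since $P$ is $\theta$-invariant and the set of ``good'' $\omega$ exhausts $\Omega$, the $\Omega$-marginal of $\nu$ equals $P$; every atom of $\nu_m$ lies in the closed invariant set $\mathcal R:=\{((\omega,x),r):x\in K(\omega),\ r(i)\neq*\Rightarrow\Phi^i(\omega,x)\in A_{r(i)}\}$, whence $\mathrm{supp}\,\nu\subseteq\mathcal R$; and a direct count on (clopen) cylinders shows, for every finite $F\subset\mathbb N_0$ and every $a\in\{1,2\}^F$, the relative-independence identity $\nu(\{r|_F=a\})=2^{-|F|}\,\nu(\{r(i)\neq*\ \forall i\in F\})$, together with $\nu(\{r(0)\neq*\})\ge b$.

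Then I would disintegrate $\nu=\int_\Omega\mu_\omega\,dP(\omega)$ over $\Omega$, so that $(\phi(\omega)\times\kappa)_*\mu_\omega=\mu_{\theta\omega}$ for $P$-a.e.\ $\omega$, and read off the deterministic horseshoe fibrewise. Applying the Birkhoff theorem to $\mathbf 1_{\{r(0)\neq*\}}$ under $\Phi\times\kappa$ and using ergodicity of $(\Omega,\mathcal F,P,\theta)$, the invariant limit has fibrewise integral $\ge b$, so for $P$-a.e.\ $\omega$ there is a $\mu_\omega$-positive set of generic points $(x^\omega,r^\omega)$ for which $J(\omega):=\{i\ge0:r^\omega(i)\neq*\}$ has density equal to that limit; in particular the limit exists and is $\ge b'$ for a fixed $b'>0$ (e.g.\ $b'=b/2$). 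Given $s\in\{1,2\}^{J(\omega)}$, one wants for each $m$ a point $x_m\in K(\omega)$ with $\Phi^i(\omega,x_m)\in A_{s(i)}$ for all $i\in J(\omega)\cap[0,m)$ --- this is where the relative-independence identity and $\mathrm{supp}\,\nu\subseteq\mathcal R$ must be brought in --- and then a subsequential limit $x_s\in K(\omega)$ (continuity of $\Phi$, closedness of $A_1,A_2$, compactness of $K(\omega)$) realizes $s$ on all of $J(\omega)$. This yields that $\{U_1,U_2\}$ is a full horseshoe.

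The main obstacle is exactly this last step: promoting the \emph{global} relative-independence of $\nu$ to a \emph{fibrewise} statement, i.e.\ showing that for $P$-a.e.\ $\omega$ \emph{every} coding $s\in\{1,2\}^{J(\omega)}$ is realized by a point of the \emph{same} fibre $K(\omega)$ --- the identity above only controls $\int\mu_\omega(\cdot)\,dP$, not each $\mu_\omega$. Overcoming this should require building the empirical measures so that the conditional law of the $\{1,2\}$-values given the $*$-pattern is already fibrewise uniform --- the analogue here of the ``independent partition with equal conditional measure $\bar\mu_y$'' used in the proof of Theorem~\ref{MTH1} --- together with a relative (random) version of the pointwise ergodic theorem and a diagonal argument over a countable dense family of finite patterns, so that the quantifiers ``for $P$-a.e.\ $\omega$'' and ``for all $s$'' may be interchanged. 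It is precisely here that the hypotheses of Theorem~\ref{MTH3} beyond those of Theorem~\ref{MTH1} --- compactness of $\Omega$, strong compactness of $K$, and joint continuity of $(\omega,x)\mapsto\phi(\omega)x$ --- are indispensable, the measurable-selection step defining $\nu_m$ relying likewise on the Polish setting.
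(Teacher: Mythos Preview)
Your strategy --- pass from the weak horseshoe to an invariant measure on an auxiliary symbolic space and read off a density set $J(\omega)$ via Birkhoff --- is the right one, and you have correctly isolated the real difficulty: the quantifier ``for all $s\in\{1,2\}^{J(\omega)}$'' is the whole point, and your relative-independence identity for $\nu$ controls only averages over $\omega$, not individual fibres. The further repairs you suggest (fibrewise-uniform empirical measures, random ergodic theorem, diagonal over patterns) would be delicate, and there is a second loose end you pass over: your $\nu_m$ requires $J_m^\omega$ and $x_s^\omega$ to depend \emph{measurably} on $\omega$, which Theorem~\ref{MTH1} does not provide.

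The paper sidesteps both problems with one move. Instead of tracking codings in $\{1,2,*\}^{\mathbb N_0}$, it works in the compact space $\Omega\times\{0,1\}^{\mathbb N_0}$ with $T(\omega,u)=(\theta\omega,\sigma u)$ and defines
\[
Y=\Big\{(\omega,u):\ \text{setting }J_u=\{n:u(n)=1\},\ \emph{every}\ s\in\{1,2\}^{J_u}\ \text{is realized by some }x_s\in K(\omega)\Big\}.
\]
The universal quantifier over $s$ is thus \emph{baked into the state space}. One checks $T(Y)\subset Y$; closedness of $Y$ uses exactly the extra hypotheses (joint continuity of $(\omega,x)\mapsto\phi(\omega)x$, closedness of $U_1,U_2$, and lower semicontinuity of $\omega\mapsto d(x,K(\omega))$ from strong compactness) to pass to limits. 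Now fix a \emph{single} $P$-generic point $\omega_*$ in the good set from Theorem~\ref{MTH1}; for each large $m$ let $v_m\in\{0,1\}^{\mathbb N_0}$ encode $J_m(\omega_*)$, so $(\omega_*,v_m)\in Y$. Form the empirical measures $\mu_m=\tfrac1m\sum_{j=0}^{m-1}\delta_{T^j(\omega_*,v_m)}$ on the compact set $Y$, take a weak-$*$ limit $\mu$, observe that its $\Omega$-marginal is $P$ (genericity of $\omega_*$) and that $\mu([u(0){=}1])\ge b$ (since $|J_m(\omega_*)|\ge bm$), and then pass to an ergodic component $\nu$ with the same properties. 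For $\nu$-generic $(\omega,u_\omega)\in Y$ the set $J(\omega)=\{n:u_\omega(n)=1\}$ has the required density, and membership in $Y$ gives the ``for all $s$'' statement for free.

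In short: your proposal carries the codings through the limit and then struggles to recover them fibrewise; the paper carries only the \emph{time set} and lets closedness of $Y$ do the work of preserving full independence. The single-generic-point construction also removes any need for measurable selection.
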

As a consequence, we have
\begin{cor}\label{Cor1} Let $\phi$ be an injective continuous on Polish space $X$   and $K$ be a $\phi$-invariant
compact set. If
$h_{\text{top}}(\phi,K)>0$, then there exists a full Horseshoe
$\{U_1,U_2\}$ for $(\phi,K)$.
\end{cor}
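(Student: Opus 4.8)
The plan is to deduce Corollary~\ref{Cor1} from Theorem~\ref{MTH3} by regarding the deterministic system $(f,K)$ as a random dynamical system over a trivial base. Take $\Omega=\{*\}$, $\mathcal F=\{\emptyset,\Omega\}$, $P=\delta_*$, and $\theta=\mathrm{id}_\Omega$; then $(\Omega,\mathcal F,P,\theta)$ is a compact metric system, and it is ergodic, since the only $\theta$-invariant members of $\mathcal F$ are $\emptyset$ and $\Omega$, of measure $0$ and $1$. Put $\phi(*):=f$, so the generated cocycle is $\phi(n,*,x)=f^n(x)$: it is injective because $f$ is, it is continuous, and the map $(\omega,x)\mapsto\phi(\omega)x$ is just $x\mapsto f(x)$, hence continuous from $\Omega\times X$ to $X$, as required by Theorem~\ref{MTH3}.

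Next I would arrange the surjectivity hypothesis $\phi(\omega)(K(\omega))=K(\theta\omega)$. If ``$\phi$-invariant'' already means $f(K)=K$ there is nothing to do. Otherwise $f(K)\subseteq K$, and I would pass to the eventual image $\widehat K:=\bigcap_{n\ge 0}f^n(K)$, a nonempty compact set: the inclusion $f(\widehat K)\subseteq\widehat K$ is clear, and for the reverse, given $y\in\widehat K$ the sets $f^{-1}(y)\cap f^n(K)$ are nonempty, compact and decreasing in $n$, so their intersection is a nonempty subset of $f^{-1}(y)\cap\widehat K$; hence $f(\widehat K)=\widehat K$. Moreover $h_{\text{top}}(f,\widehat K)=h_{\text{top}}(f,K)$: any $f$-invariant Borel probability measure $\mu$ on $K$ satisfies $\mu(f^n(K))=\mu(f^{-n}(f^n(K)))=1$ for every $n$ and therefore $\mu(\widehat K)=1$, so the two suprema in the variational principle agree. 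Thus I may set $K(*):=\widehat K$ (or $K$ when $f(K)=K$); over the one-point base this is automatically a strongly compact random set and satisfies $\phi(\omega)(K(\omega))=K(\theta\omega)$.

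With these identifications the entropy $h_{\text{top}}(\phi,K)=\lim_{\epsilon\to 0}\limsup_n\frac1n\int_\Omega\log r_n(K,\omega,\epsilon,\phi)\,dP$ collapses to the topological entropy of $f|_{\widehat K}$ in the sense of Bowen and Dinaburg, which is positive by the previous step. Theorem~\ref{MTH3} then yields non-empty, closed, bounded sets $U_1,U_2\subset X$ with $d(U_1,U_2)>0$ and a constant $b>0$ such that, for the single point of $\Omega$, there is $J\subset\mathbb{N}_0$ for which $\lim_{m\to\infty}\frac1m|J\cap\{0,1,\dots,m-1\}|$ exists and is $\ge b$, and for every $s\in\{1,2\}^{J}$ there is $x_s\in\widehat K$ with $f^j(x_s)=\phi(j,*,x_s)\in U_{s(j)}\cap K(\theta^j *)=U_{s(j)}\cap\widehat K\subseteq U_{s(j)}$ for all $j\in J$. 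Since $\widehat K\subseteq K$, this is precisely a full horseshoe of two symbols for $(f,K)$, proving the corollary. The argument carries no real mathematical content beyond Theorem~\ref{MTH3}; the only step needing attention is the reduction to $f(K)=K$ so as to match the surjectivity hypothesis of Theorem~\ref{MTH3} (and the bookkeeping check that the measurability and strongly compact random set requirements are vacuous over a one-point probability space), and I do not expect a genuine obstacle there.
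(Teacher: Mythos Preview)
Your proposal is correct and is exactly the intended argument: the paper states Corollary~\ref{Cor1} without proof, simply as a consequence of Theorem~\ref{MTH3}, and your reduction to the one-point base $(\Omega,\mathcal F,P,\theta)=(\{*\},\{\emptyset,\{*\}\},\delta_*,\mathrm{id})$ is the natural way to make this explicit. Your extra care in passing to the eventual image $\widehat K=\bigcap_{n\ge 0}f^n(K)$ so as to meet the surjectivity hypothesis $\phi(\omega)K(\omega)=K(\theta\omega)$, together with the variational-principle argument that this does not change the topological entropy, is a detail the paper does not spell out.
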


The notion of Li-Yorke chaos was introduced in \cite{LY} for interval maps. With a small modification this notion can be extended to a metric space. Following the idea of Li and Yorke, a subset $D$ of $K({\omega})$,  is called {\it $\kappa$-chaotic set} for $(\omega,\phi)$, where $\omega\in
\Omega$   and $\kappa>0$, if for every pair $(x_1,x_2)$ of distinct
points in $D$, one has $$\liminf_{n\rightarrow +\infty} d(\phi(n, \omega, x_1),\phi(n, \omega, x_2))=0 \text{ and }
\limsup_{n\rightarrow +\infty} d(\phi(n, \omega, x_1),\phi(n, \omega, x_2))\ge \kappa.$$


\smallskip
The final result is about the positive entropy implying the existence of Li-Yorke chaos.

\begin{thm}\label{MTH} Let $\phi$ be an injective continuous random dynamical system on Polish space $X$   over an ergodic Polish system
$(\Omega, \mathcal{F}, {P},\theta)$. Let $K$ be a $\phi$-invariant random set with compact $\omega$-section $K(\omega)$ such that $\phi(\omega)(K(\omega))=K(\theta \omega)$. If $h_{\text{top}}(\phi,K)>0$,
then there exists  $\kappa>0$ such that for ${P}$-a.e. $\omega\in \Omega$ there is a
$\kappa$-chaotic set $S(\omega)\subset K(\omega)$ of a union of countably many Cantor sets for $(\omega,\phi)$. \end{thm}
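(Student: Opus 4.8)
\medskip
\noindent\textbf{Plan of proof of Theorem~\ref{MTH}.}
The plan is to reduce Li--Yorke chaos to a statement about an invariant measure and then to cut out the scrambled Cantor sets by a Kuratowski--Mycielski type selection. As in the proof of the horseshoe theorems, the first step is to pass from $h_{\text{top}}(\phi,K)>0$ to a $\phi$--invariant Borel probability measure $\mu$ on $\bigcup_{\omega\in\Omega}\{\omega\}\times K(\omega)$, with marginal $P$ on $\Omega$ and positive fibre entropy $h_\mu(\phi\mid\cF)>0$, which may be taken ergodic; this is the variational principle for fibre entropy (see \cite{B92,K01}), applied to the compact random set $K$. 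Disintegrate $\mu=\int_\Omega\mu_\omega\,dP(\omega)$ with $\mu_\omega\in\mc P(K(\omega))$, and write $\Phi(\omega,x)=(\theta\omega,\phi(\omega)x)$ for the skew product.

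Next I would form the relative independent self--joining of $\mu$ over its Pinsker $\sigma$--algebra. Let $\Pi$ be the Pinsker $\sigma$--algebra of $(\Omega\times X,\mu,\Phi)$ relative to $\cF$, Rohlin--disintegrate $\mu$ over $\Pi$ as $\mu=\int\bar\mu_\zeta\,d\mu(\zeta)$, and set
\[
\lambda=\int \bar\mu_\zeta\times\bar\mu_\zeta\;d\mu(\zeta).
\]
Then $\lambda$ is a self--joining of $\mu$ over $\theta$ (invariant under $\Phi\times_\theta\Phi$), it projects to $\mu$ on each factor, and it disintegrates over $\Omega$ as $\lambda=\int_\Omega\lambda_\omega\,dP(\omega)$ with $\lambda_\omega$ carried by $K(\omega)\times K(\omega)$. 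Since $h_\mu(\phi\mid\cF)>0$, the algebra $\Pi$ is properly contained in $\cF\otimes\cB(X)$ modulo $\mu$ -- this is exactly the place where the ``independent'' two--atom partition with equal conditional masses used for the horseshoe enters -- and this forces $\lambda$ not to be carried by the fibrewise diagonal. Using inner regularity of $\lambda_\omega$ together with ergodicity of $\mu$ over $\theta$, I obtain a constant $\kappa>0$, disjoint non--empty closed bounded sets $U_1,U_2\subset X$ with $d(U_1,U_2)>\kappa$, and the property that $\lambda_\omega(\{(x,x'):d(x,x')>\kappa\})>0$ for $P$--a.e.\ $\omega$.

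The heart of the argument is then to show that $\lambda$--almost every pair is a fibrewise Li--Yorke pair: for $\lambda$--a.e.\ $(\omega,x,x')$,
\[
\liminf_{n\to+\infty}d(\phi(n,\omega,x),\phi(n,\omega,x'))=0\qquad\text{and}\qquad\limsup_{n\to+\infty}d(\phi(n,\omega,x),\phi(n,\omega,x'))\ge\kappa .
\]
The $\limsup$ assertion is the easy half: Poincar\'e recurrence (or relative ergodicity of the joining over the base) applied to the positive--measure set $\{d>\kappa\}$ shows that $\lambda$--a.e.\ orbit meets it infinitely often. The $\liminf$ assertion -- proximality of $\lambda$--a.e.\ pair -- is the main obstacle: it has to be extracted from the fact that $\lambda$ is \emph{relatively independent over the Pinsker factor}, via a random, infinite--dimensional version of the Blanchard--Host--Ruette mechanism (the Pinsker factor already carries all the deterministic behaviour, so conditioned on it the two coordinates decouple and their orbits are forced arbitrarily close infinitely often). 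Carrying this out fibrewise with every estimate measurable in $\omega$, over a Polish non--locally compact $X$ and with a non--invertible cocycle, is the delicate step; it is also where the analysis overlaps with the construction behind Theorem~\ref{MTH1}.

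Finally I would extract the Cantor sets. Put $R=\{(\omega,x,x'):\liminf_n d=0,\ \limsup_n d\ge\kappa\}\cup\{(\omega,x,x):\omega\in\Omega,\ x\in K(\omega)\}$. The fibre relation $R_\omega$ is a $G_\delta$ subset of $K(\omega)\times K(\omega)$, and by the previous step $\lambda(R)=1$, so $R_\omega$ is comeager in a perfect compact set carrying $\lambda_\omega$ for $P$--a.e.\ $\omega$. A Kuratowski--Mycielski type theorem (as used in \cite{BGKM}), in a form with measurable dependence on the parameter $\omega$, then produces for $P$--a.e.\ $\omega$ a Cantor set $S_0(\omega)\subset K(\omega)$ with $S_0(\omega)\times S_0(\omega)\subset R_\omega$ and $\omega\mapsto S_0(\omega)$ measurable. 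Exhausting $\lambda_\omega$ by a countable increasing family of such sets yields $S(\omega)=\bigcup_{k}S_k(\omega)$, a union of countably many Cantor sets, any two distinct points of which form a $\kappa$--scrambled pair for $(\omega,\phi)$, which is the conclusion of the theorem. \hfill$\square$
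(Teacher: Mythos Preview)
Your plan has the right skeleton (variational principle, relative Pinsker algebra, relatively independent self-joining, Mycielski extraction), but it omits a step that the paper treats as essential: the passage to the natural extension. The machinery you invoke---the relative Pinsker $\sigma$-algebra, non-atomicity of the conditional measures, and above all ergodicity of the relatively independent joining (Lemma~\ref{key-lem})---is stated for \emph{invertible} Lebesgue systems, whereas $\Phi(\omega,x)=(\theta\omega,\phi(\omega)x)$ is only injective. The paper lifts $(K,\Phi)$ and $(\Omega,\theta)$ to their natural extensions $(\bar K,\bar\Phi)$, $(\bar\Omega,\bar\theta)$, verifies via Lemma~\ref{GAR} and Lemma~\ref{na=zero} that $h_{\bar\mu}(\bar\Phi\mid\bar\pi)=h_\mu(\phi)>0$, and only then forms the Pinsker factor $\pi_1:\bar K\to Y$ and the joining $\bar\mu\times_Y\bar\mu$. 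Without this lift your disintegration over $\Pi$ and the claimed ergodicity of $\lambda$ are not justified.

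Second, you flag proximality ($\liminf=0$) as the delicate step and leave it to an unspecified ``Blanchard--Host--Ruette mechanism'' with fibrewise measurability constraints. In the paper this is actually short once ergodicity of the joining is in hand: one covers $X^{\mathbb Z}$ by countably many small $\rho$-balls, finds one $D_r$ with $\bar\mu(D_r)>0$, and notes $\bar\mu\times_Y\bar\mu(D_r\times D_r)\ge(\int_Y\bar\mu_y(D_r)\,d\nu)^2>0$; Birkhoff then pushes $\bar\mu\times_Y\bar\mu$-a.e.\ orbits into every $A_n$. Mycielski is then applied not in $K(\omega)$ but in the compact fibre $\bar K_y\subset X^{\mathbb Z}$ over the Pinsker factor, and the resulting set is projected to $K(\omega)$, injectivity of the projection being checked via the $\limsup\ge 4\kappa$ property. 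No measurable-in-$\omega$ selection is needed, since the statement only claims existence for $P$-a.e.\ $\omega$.
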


\section{Basic Concepts and Lemmas on Measurable Dynamical Systems}
In this section, we review some of basic concepts and results from the theory of measurable dynamical systems and introduce several lemmas that we need for the proofs of the main theorems.

\subsection{Various Dynamical Systems} In this paper for a probability space $(X,\mathcal{B},\mu)$ we always require
that $\mathcal{B}$ is countably generated ($\mu$-mod $0$), that is, there exists $\{ A_i\}_{i=1}^\infty \subset
\mathcal{B}$ such that for any $A\in \mathcal{B}$ and  $\epsilon>0$ there is $i:=i(A,\epsilon)\in
\mathbb{N}$ satisfying $\mu(A\Delta A_i)<\epsilon$. {\it A measure-theoretic dynamical system} (MDS)
$(X,\mathcal{B},\mu,T)$  is a measure-preserving map $T$ on a probability space $(X,\mathcal{B},\mu)$.

{\it A Polish probability space}  $(X,\mathcal{B}_X,\mu)$ means that $X$ is a separable topological space
whose topology is metrizable by a complete metric, $\mathcal{B}_X$ is the Borel $\sigma$-algebra, and $\mu$
is a  Borel probability measure on $X$. {\it A Polish system}  $(X,\mathcal{B}_X,\mu,T)$  is a
measure-preserving map $T$ on a Polish space $(X,\mathcal{B}_X,\mu)$. {\it A Lebesgue system}
$(X,\mathcal{B},\mu,T)$  is a measure-preserving map $T$ on a Lebesgue space $(X,\mathcal{B},\mu)$ (see
\cite{R}). For a Polish system
$(X,\mathcal{B}_X,\mu,T)$, the MDS $(X,\mathcal{B}_\mu,\mu,T)$  constitutes  a Lebesgue system, where
$\mathcal{B}_\mu$ is the completion of the Borel $\sigma$-algebra $\mathcal{B}_X$ with  respect to $\mu$.

A MDS $(Y,\mathcal{D},\nu,S)$  is  said to be {\it a factor} of $(X,\mathcal{B},\mu,T)$ if there is  a
measure-preserving map $\pi:(X,\mathcal{B},\mu)\rightarrow (Y,\mathcal{D},\nu)$ such that $\pi T=S\pi$.
Equivalently, we say that $(X,\mathcal{B},\mu,T)$ is an extension of $(Y,\mathcal{D},\nu,T)$. In this case,
we also  say $\pi:(X,\mathcal{B},\mu,T)\rightarrow (Y,\mathcal{D},\nu,S)$ is  {\it a factor map}.

\subsection{Conditional entropy} In this subsection, we first recall the notation of the conditional entropy
of a MDS. Then we state some results about the conditional entropy. Consider an MDS $(X,\mathcal{B},\mu,T)$. {\it A
partition} of $X$ is a family of pairwise disjoint sets in $\mathcal{B}$ whose union is $X$. For a given
partition $\alpha$ of $X$ and $x\in X$, we denote by $\alpha(x)$ the atom of $\alpha$ containing $x$.

We denote the set of finite partitions of $X$ by $\mathcal{P}_X(\mathcal{B})$ or for simplicity
$\mathcal{P}_X$. Given two partitions $\alpha,\beta$ of $X$, $\alpha$ is said to be finer than $\beta$
(denote by $\alpha\succeq \beta$) if each element of $\alpha$ is  contained in some element of $\beta$. Let
$\alpha\vee\beta=\{A\cap B: A\in \alpha,B\in \beta \}$.

For any given $\alpha \in \mathcal{P}_X$ and any sub-$\sigma$-algebra $\mathcal{C}$ of $\mathcal{B}$, let
$$H_{\mu}(\alpha)=\sum_{A\in \alpha} -\mu(A) \log \mu(A)\ \text{and}\ \
H_{\mu}(\alpha|\mathcal{C})=\sum_{A\in \alpha} \int_X
-\mathbb{E}_\mu(1_A|\mathcal{C}) \log
\mathbb{E}_\mu(1_A|\mathcal{C}) d \mu,$$ where
$\mathbb{E}_\mu(1_A|\mathcal{C})$ is the conditional expectation of
the characterization function $1_A$ of $A$ with respect to
$\mathcal{C}$. One standard fact states that $H_\mu(\alpha|
\mathcal{C})$ increases with respect to $\alpha$ and decreases with
respect to $\mathcal{C}$. When $T^{-1}\mathcal{C}\subseteq
\mathcal{C}$, it is not hard to see that
$H_\mu(\bigvee_{i=0}^{n-1}T^{-i}\alpha|\mathcal{C})$ is non-negative
and sub-additive sequence for a given $\alpha\in \mathcal{P}_X$, so
we can define $$h_\mu(T,\alpha|\mathcal{C})=\lim_{n\rightarrow
+\infty} \frac{1}{n}
H_\mu(\bigvee_0^{n-1}T^{-i}\alpha|\mathcal{C})=\inf_{n\ge 1}
\frac{1}{n} H_\mu(\bigvee_0^{n-1}T^{-i}\alpha|\mathcal{C}).$$ The
{\it measure-theoretic entropy} of $\mu$ with respect to
$\mathcal{C}$ is defined as $$h_\mu(T|\mathcal{C})=\sup_{\alpha \in
\mathcal{P}_X} h_\mu(T,\alpha|\mathcal{C}). $$

Let $\pi:(X,\mathcal{B},\mu,T)\rightarrow (Y,\mathcal{D},\nu,S)$ be a factor map between two MDSes. We define
{\it the conditional entropy}  of $\mu$ with respect to $\pi$ as
$$h_\mu(T|\pi)=h_\mu(T|\pi^{-1}(\mathcal{D})).$$

The following result is a generalization of Abramov-Rohlin formula borrowed from \cite{BC}.
\begin{lem} \label{GAR} (Generalized Abramov-Rohlin formula) Let
$\pi:(X,\mathcal{B},\mu,T)\rightarrow (Y,\mathcal{D},\nu,S)$ and $\psi:(Y,\mathcal{B},\nu,S)\rightarrow
(Z,\mathcal{A},\lambda,R)$ be two factor maps between two MDSes. Then  $\psi\circ
\pi:(X,\mathcal{B},\mu,T)\rightarrow (Z,\mathcal{A},\lambda,R)$ is also a factor map between MDSes and
$$h_\mu(T|\psi\circ \pi)=h_\mu(T|\pi)+h_\nu(S|\psi).$$ \end{lem}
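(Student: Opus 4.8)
The plan is to restate the identity as a conditional (generalized) Abramov--Rohlin formula for an increasing pair of $T$-invariant sub-$\sigma$-algebras of $\mathcal B$, to dispatch the easy ``$\ge$'' half by the partition chain rule, to prove the ``$\le$'' half --- the substantial part --- by a telescoping-plus-martingale argument, and to identify the intermediate term with $h_\nu(S|\psi)$ by a short lifting computation. That $\psi\circ\pi$ is a factor map is immediate: it is measure-preserving as a composition of such, and $(\psi\circ\pi)T=\psi(\pi T)=\psi(S\pi)=(R\psi)\pi=R(\psi\circ\pi)$. Now put $\mathcal C_0:=(\psi\circ\pi)^{-1}(\mathcal A)=\pi^{-1}(\psi^{-1}\mathcal A)$ and $\mathcal C_1:=\pi^{-1}(\mathcal D)$. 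Then $\mathcal C_0\subseteq\mathcal C_1$ (since $\psi^{-1}\mathcal A\subseteq\mathcal D$), both are countably generated mod $0$ (being pullbacks of countably generated $\sigma$-algebras), and $T^{-1}\mathcal C_i\subseteq\mathcal C_i$ (since $\mathcal D$ and $\psi^{-1}\mathcal A$ are $S$-invariant and $\pi T=S\pi$). By definition $h_\mu(T|\psi\circ\pi)=h_\mu(T|\mathcal C_0)$, $h_\mu(T|\pi)=h_\mu(T|\mathcal C_1)$, and $h_\nu(S|\psi)=h_\nu(S|\psi^{-1}\mathcal A)$, so --- writing $h_\mu(T;\mathcal C_1|\mathcal C_0):=\sup\{h_\mu(T,\gamma|\mathcal C_0):\gamma\in\mathcal P_X,\ \gamma\subseteq\mathcal C_1\}$ --- it suffices to prove
\[
h_\mu(T|\mathcal C_0)=h_\mu(T|\mathcal C_1)+h_\mu(T;\mathcal C_1\,|\,\mathcal C_0)\qquad\text{and}\qquad h_\mu(T;\mathcal C_1\,|\,\mathcal C_0)=h_\nu(S\,|\,\psi^{-1}\mathcal A).
\]

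The second identity is a routine lifting argument: every finite partition $\gamma\subseteq\mathcal C_1=\pi^{-1}\mathcal D$ has the form $\pi^{-1}\beta$ with $\beta\in\mathcal P_Y$, and since $\pi$ is measure-preserving with $\pi T=S\pi$,
\[
H_\mu\Big(\bigvee_{i=0}^{n-1}T^{-i}\pi^{-1}\beta\;\Big|\;\mathcal C_0\Big)=H_\nu\Big(\bigvee_{i=0}^{n-1}S^{-i}\beta\;\Big|\;\psi^{-1}\mathcal A\Big)\qquad(n\ge1),
\]
whence $h_\mu(T,\pi^{-1}\beta|\mathcal C_0)=h_\nu(S,\beta|\psi^{-1}\mathcal A)$; taking the supremum over $\beta$ gives it. For the first identity the ``$\ge$'' direction is direct: for $\alpha\in\mathcal P_X$ and finite $\gamma\subseteq\mathcal C_1$ the chain rule for conditional entropy of partitions gives
\[
H_\mu\Big(\bigvee_{i=0}^{n-1}T^{-i}(\alpha\vee\gamma)\,\Big|\,\mathcal C_0\Big)=H_\mu\Big(\bigvee_{i=0}^{n-1}T^{-i}\gamma\,\Big|\,\mathcal C_0\Big)+H_\mu\Big(\bigvee_{i=0}^{n-1}T^{-i}\alpha\,\Big|\,\bigvee_{i=0}^{n-1}T^{-i}\gamma\vee\mathcal C_0\Big),
\]
and, as $\bigvee_{i=0}^{n-1}T^{-i}\gamma\vee\mathcal C_0\subseteq\mathcal C_1$, the last term is $\ge H_\mu(\bigvee_{i=0}^{n-1}T^{-i}\alpha\,|\,\mathcal C_1)$; dividing by $n$, letting $n\to\infty$, and taking suprema over $\alpha$ and $\gamma$ yields $h_\mu(T|\mathcal C_0)\ge h_\mu(T;\mathcal C_1|\mathcal C_0)+h_\mu(T|\mathcal C_1)$.

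The reverse inequality is the heart of the matter (this is the content of the generalized Abramov--Rohlin formula borrowed from \cite{BC}). For $\alpha\in\mathcal P_X$ and finite $\gamma\subseteq\mathcal C_1$, monotonicity together with the chain rule above gives $h_\mu(T,\alpha|\mathcal C_0)\le h_\mu(T,\alpha\vee\gamma|\mathcal C_0)=h_\mu(T,\gamma|\mathcal C_0)+L(\alpha,\gamma)$, where $L(\alpha,\gamma):=\lim_n\frac1n H_\mu(\bigvee_{i=0}^{n-1}T^{-i}\alpha\,|\,\bigvee_{i=0}^{n-1}T^{-i}\gamma\vee\mathcal C_0)$. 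One must then show that, choosing finite $\gamma_k\subseteq\mathcal C_1$ with $\gamma_k\nearrow\mathcal C_1$, one has $L(\alpha,\gamma_k)\to h_\mu(T,\alpha|\mathcal C_1)$ as $k\to\infty$; since also $h_\mu(T,\gamma_k|\mathcal C_0)\le h_\mu(T;\mathcal C_1|\mathcal C_0)$, passing to the limit in the displayed inequality and then taking the supremum over $\alpha$ gives $h_\mu(T|\mathcal C_0)\le h_\mu(T;\mathcal C_1|\mathcal C_0)+h_\mu(T|\mathcal C_1)$. The proof that $L(\alpha,\gamma_k)\to h_\mu(T,\alpha|\mathcal C_1)$ is the delicate step: it proceeds by telescoping the conditional entropy defining $L$ in the time variable, using $T^{-1}\mathcal C_1\subseteq\mathcal C_1$ (so that $\bigvee_{i\ge0}T^{-i}\gamma_k\vee\mathcal C_0\nearrow\mathcal C_1$), Cesàro averaging, and the martingale convergence $H_\mu(\,\cdot\,|\,\mathcal G_k)\to H_\mu(\,\cdot\,|\,\mathcal G)$ for $\mathcal G_k\nearrow\mathcal G$. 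The main difficulty will be exactly this step --- the uniform-in-$n$ control of the fiber entropy term and the interchange of $\lim_{n\to\infty}$ with the passage $\gamma_k\nearrow\mathcal C_1$; all other pieces (the factor-map check, the reduction, the lifting, and the ``$\ge$'' direction) are routine manipulations with the conditional-entropy properties recalled in Section~3. One should note throughout that every quantity lives in $[0,+\infty]$ and that the additive formulation is essential: in the infinite-entropy regime the formula cannot be obtained by cancellation from the ordinary (non-conditional) Abramov--Rohlin formula.
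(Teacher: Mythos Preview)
The paper does not supply its own proof of this lemma: it is stated with the remark that it is ``borrowed from \cite{BC}'' and then used as a black box (only Lemma~\ref{na=zero} is proved in the appendix). So there is no in-paper argument to compare yours against.

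Your outline is the standard route to this result and matches the approach in the cited source: reduce to a pair of nested $T$-invariant sub-$\sigma$-algebras $\mathcal C_0\subseteq\mathcal C_1$, get the ``$\ge$'' half from the chain rule, and obtain the ``$\le$'' half by approximating $\mathcal C_1$ by finite partitions $\gamma_k$ and showing $L(\alpha,\gamma_k)\to h_\mu(T,\alpha\mid\mathcal C_1)$. The reduction, the lifting identity $h_\mu(T;\mathcal C_1\mid\mathcal C_0)=h_\nu(S\mid\psi^{-1}\mathcal A)$, and the ``$\ge$'' half are all correct as written. The one place your proposal is not yet a proof is precisely the step you flag yourself: the passage from $L(\alpha,\gamma_k)$ to $h_\mu(T,\alpha\mid\mathcal C_1)$. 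You assert that telescoping in $n$, Ces\`aro averaging, and the martingale convergence $H_\mu(\cdot\mid\mathcal G_k)\to H_\mu(\cdot\mid\mathcal G)$ will do it, but you have not actually carried out the interchange of the $n\to\infty$ and $k\to\infty$ limits, nor justified why the telescoped terms converge uniformly enough for Ces\`aro to help. This is exactly the technical core of \cite{BC}; until it is written out, what you have is a correct plan rather than a complete proof.
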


\smallskip
For a given Lebesgue system $(X,\mathcal{B},\mu,T)$, let
$$\bar{X}=\{ \bar{x}=(x_i)_{i\in  \mathbb{Z}}\in
X^{\mathbb{Z}}: \, Tx_{i}=x_{i+1}, i\in \mathbb{Z}\}
$$ and
$(\bar{X},\bar{\mathcal{B}},\bar{\mu},\bar{T})$ be the natural extension of $(X,\mathcal{B},\mu,T)$ (see
\cite[Section 3.7]{R}). More precisely, for $n\in \mathbb{Z}$, let $\Pi_{n,X}:\bar{X}\rightarrow X$ with
$\Pi_{n,X}((x_i)_{i\in \mathbb{Z}})=x_n$ for $\bar{x}=(x_i)_{i\in \mathbb{Z}}\in X^{\mathbb{Z}}$.
Set
$$\bar{\mathcal{B}}_n=\Pi_{n,X}^{-1}(\mathcal{B}).$$ Clearly, $\bar{\mathcal{B}}_i\supseteq
\bar{\mathcal{B}}_{i+1}$ for each $i\in \mathbb{Z}$. Let $\mathcal{D}_{\bar{X}}=\bigcup_{i\in
\mathbb{Z}}\bar{\mathcal{B}}_i$. Then $\mathcal{D}_{\bar{X}}$ is a algebra of subsets of  $\bar X$. The Lebesgue
measure $\bar{\mu}$ on  $\mathcal{D}_{\bar{X}}$ satisfies $\bar{\mu}(\Pi_{n,X}^{-1}(A))=\mu(A)$  for $A\in
\mathcal{B}$ and  $n\in \mathbb{Z}$.  $\bar{\mathcal{B}}$ is the completion of the $\sigma$-algebra
generated by $\mathcal{D}_{\bar{X}}$ with respect to $\bar{\mu}$. The self-map $\bar{T}$  defined on
$\bar{X}$ by
 $$\bar{T}( (x_i)_{i\in  \mathbb{Z}})=(Tx_{i})_{i\in \mathbb{Z}}=(x_{i+1})_{i\in \mathbb{Z}}$$
is an invertible measure-preserving map on Lebesgue space $(\bar{X},\bar{\mathcal{B}},\bar{\mu})$. Thus,
$(\bar{X},\bar{\mathcal{B}},\bar{\mu},\bar{T})$ is an invertible Lebesgue system.

Let $\Pi_X:=\Pi_{0,X}$.  Then $\Pi_X:(\bar{X},\bar{\mathcal{B}},\bar{\mu},\bar{T})\rightarrow
(X,\mathcal{B},\mu,T)$ is a factor  map,  which is called {\it the natural extension} of
$(X,\mathcal{B},\mu,T)$. In  \cite{R61} it is proved that $(\bar{X},\bar{\mathcal{B}},\bar{\mu},\bar{T})$ is
ergodic if and only if $(X,\mathcal{B},\mu,T)$ is ergodic.

The next lemma is on the entropy of the extended map conditional to the natural extension. Its proof is given in Appendix A.
\begin{lem} \label{na=zero} Let $\Pi_X:(\bar{X},\bar{\mathcal{B}},\bar{\mu},\bar{T})\rightarrow
(X,\mathcal{B},\mu,T)$ be  the natural extension of Lebesgue system $(X,\mathcal{B},\mu,T)$. Then
$h_{\bar{\mu}}(\bar{T}|\Pi_X)=0$.
\end{lem}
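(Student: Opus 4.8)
The plan is to show directly that $h_{\bar\mu}(\bar T,\alpha\,|\,\Pi_X^{-1}(\mathcal B))=0$ for every finite partition $\alpha\in\mathcal P_{\bar X}$; taking the supremum over $\alpha$ then gives $h_{\bar\mu}(\bar T\,|\,\Pi_X)=0$. Write $\mathcal C:=\Pi_X^{-1}(\mathcal B)=\bar{\mathcal B}_0$. First I would record two facts coming straight from the construction of the natural extension. Since $\Pi_{n,X}=\Pi_X\circ\bar T^{\,n}$ for all $n\in\mathbb Z$, we have $\bar{\mathcal B}_n=\bar T^{-n}\mathcal C$; in particular $\bar T^{-1}\mathcal C\subseteq\mathcal C$ (so that $h_{\bar\mu}(\bar T\,|\,\mathcal C)$ is well defined in the sense of Section~3.2), and $\bar T^{\,j}\mathcal C=\bar{\mathcal B}_{-j}$ for every $j\ge 0$. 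Moreover $\bigcup_{i\in\mathbb Z}\bar{\mathcal B}_i=\bigcup_{j\ge 0}\bar{\mathcal B}_{-j}=\mathcal D_{\bar X}$ generates $\bar{\mathcal B}$ modulo $\bar\mu$, so the increasing sequence of $\sigma$-algebras $\bar T^{\,j}\mathcal C=\bar{\mathcal B}_{-j}$ increases ($\bar\mu$-mod $0$) to $\bar{\mathcal B}$ as $j\to\infty$.

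Next I would expand, via the chain rule for conditional static entropy, $H_{\bar\mu}\!\big(\bigvee_{i=0}^{n-1}\bar T^{-i}\alpha\,\big|\,\mathcal C\big)=\sum_{j=0}^{n-1}H_{\bar\mu}\!\big(\bar T^{-j}\alpha\,\big|\,\big(\bigvee_{i=0}^{j-1}\bar T^{-i}\alpha\big)\vee\mathcal C\big)$, and then apply the invertible measure-preserving map $\bar T^{\,j}$ inside the $j$-th summand, using $H_{\bar\mu}(\bar T^{-j}\xi\,|\,\mathcal G)=H_{\bar\mu}(\xi\,|\,\bar T^{\,j}\mathcal G)$, to rewrite it as $H_{\bar\mu}(\alpha\,|\,\mathcal H_j)$ with $\mathcal H_j:=\big(\bigvee_{k=1}^{j}\bar T^{\,k}\alpha\big)\vee\bar T^{\,j}\mathcal C$. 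By the facts above, $\mathcal H_j$ increases to $\bar{\mathcal B}$ modulo $\bar\mu$. Hence, applying the increasing-martingale convergence theorem to $\mathbb{E}_{\bar\mu}(1_A\,|\,\mathcal H_j)$ for each of the finitely many atoms $A\in\alpha$, together with the boundedness and continuity of $t\mapsto -t\log t$ on $[0,1]$, we get $H_{\bar\mu}(\alpha\,|\,\mathcal H_j)\to H_{\bar\mu}(\alpha\,|\,\bar{\mathcal B})=0$. Therefore $\tfrac1n H_{\bar\mu}\!\big(\bigvee_{i=0}^{n-1}\bar T^{-i}\alpha\,\big|\,\mathcal C\big)=\tfrac1n\sum_{j=0}^{n-1}H_{\bar\mu}(\alpha\,|\,\mathcal H_j)\to 0$, being the Cesàro average of a null sequence, so $h_{\bar\mu}(\bar T,\alpha\,|\,\mathcal C)=0$, which completes the argument.

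The only slightly delicate points are bookkeeping ones: checking the reindexing $\bar T^{\,j}\big(\big(\bigvee_{i=0}^{j-1}\bar T^{-i}\alpha\big)\vee\mathcal C\big)=\big(\bigvee_{k=1}^{j}\bar T^{\,k}\alpha\big)\vee\bar T^{\,j}\mathcal C$ (this is where invertibility of $\bar T$, hence legitimacy of the positive powers $\bar T^{\,j}$ and of the identity $H_{\bar\mu}(\bar T^{-j}\xi\,|\,\mathcal G)=H_{\bar\mu}(\xi\,|\,\bar T^{\,j}\mathcal G)$, is used), and confirming that the $\sigma$-algebra generated by $\bigcup_{j\ge0}\bar{\mathcal B}_{-j}$ is all of $\bar{\mathcal B}$ up to $\bar\mu$-null sets, which is exactly how $\bar{\mathcal B}$ was defined. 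The martingale/continuity step is routine, so I do not expect a genuine obstacle. I note that one could alternatively obtain the lemma in one line by combining Lemma~\ref{GAR} applied with a trivial base (which gives $h_{\bar\mu}(\bar T)=h_{\bar\mu}(\bar T\,|\,\Pi_X)+h_\mu(T)$) with the classical identity $h_{\bar\mu}(\bar T)=h_\mu(T)$ for natural extensions; but since that identity is itself proved by essentially the same generating argument, I would prefer to present the self-contained computation above.
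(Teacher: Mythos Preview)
Your argument is correct. Both you and the paper prove $h_{\bar\mu}(\bar T,\alpha\mid\Pi_X^{-1}\mathcal B)=0$ for every finite $\alpha$, but by different mechanisms. The paper's proof is an approximation argument: given $\alpha$ and $m\in\mathbb N$, it chooses a partition $\gamma=\bar T^{-i_*}\Pi_X^{-1}\tau$ with atoms in some $\bar{\mathcal B}_{-i_*}$ so that $H_{\bar\mu}(\alpha\mid\gamma)<1/m$, and then estimates $H_{\bar\mu}\big(\bigvee_{i=0}^{n-1}\bar T^{-i}\alpha\mid\bar{\mathcal B}_0\big)$ by shifting by $i_*$ and conditioning on $\bigvee_{j=0}^{n-1}\bar T^{-j}\gamma$, ultimately bounding the entropy rate by $H_{\bar\mu}(\alpha\mid\gamma)$. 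Your route is more structural: you use the chain rule and invertibility to write the $n$-step conditional entropy as a sum $\sum_{j=0}^{n-1}H_{\bar\mu}(\alpha\mid\mathcal H_j)$ with $\mathcal H_j\supseteq\bar T^{\,j}\mathcal C=\bar{\mathcal B}_{-j}\nearrow\bar{\mathcal B}$, and then invoke martingale convergence to get $H_{\bar\mu}(\alpha\mid\mathcal H_j)\to 0$, hence the Ces\`aro average vanishes. Your proof is slightly slicker and conceptually transparent (it exhibits the lemma as a direct consequence of the fact that $\bar T^{\,j}\Pi_X^{-1}\mathcal B$ exhausts $\bar{\mathcal B}$), at the cost of citing the increasing martingale theorem; the paper's version is more self-contained, relying only on elementary monotonicity and subadditivity of conditional entropy. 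Your side remark about deducing the lemma from Lemma~\ref{GAR} together with $h_{\bar\mu}(\bar T)=h_\mu(T)$ is also valid, though as you note that identity is usually proved by essentially the same generating argument.
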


\subsection{Relative Pinsker $\sigma$-algebra} In this
subsection, we recall some notations and results on the  relative
Pinsker $\sigma$-algebra.

Let $(X,\mathcal{B},\mu,T)$ be an invertible Lebesgue system.
A $T$-invariant sub-$\sigma$-algebra $\mathcal{F}$ (i.e., $T^{-1}\mathcal{F}=\mathcal{F}$) of $\mathcal{B}$
determines an invertible Lebesgue  factor $(Y,\mathcal{D},\nu,S)$ of $(X,\mathcal{B},\mu,T)$, that is, there
exists a factor map $\pi:(X,\mathcal{B},\mu,T)\rightarrow (Y,\mathcal{D},\nu,S)$ between two invertible
Lebesgue systems such that $\pi^{-1}(\mathcal{D})=\mathcal{F}$. This factor is  unique, up to isomorphism
(see for example \cite[Section 4.1]{P69}).

For a  factor map $\pi:(X,\mathcal{B},\mu,T)\rightarrow (Y,\mathcal{D},\nu,S)$  between two invertible
Lebesgue systems, there is a set of conditional probability measures $\{\mu_y\}_{y\in Y}$ with the following
properties: \begin{itemize} \item $\mu_y$  is a Lebesgue  measure on $X$ with $\mu_y(\pi^{-1}(y))=1$ for all
$y\in Y$.

\item for each $f \in L^1(X,\mu)$,  one  has $f \in L^1(X,\mu_y)$ for $\nu$-a.e. $y\in Y$, the map $y \mapsto
    \int_X f\,d\mu_y$ is in $L^1(Y,\nu)$ and $\int_Y \left(\int_X f\,d\mu_y \right)\, d\nu(y)=\int_X f
    \,d\mu$. \end{itemize}

\medskip In this case,  we say that $\mu=\int_Y \mu_y d \nu(y)$ is {\it disintegration of $\mu$ relative to
the  factor $(Y,\mathcal{D},\nu,S)$}. The measures $\{ \mu_y\}$ are essentially unique; that is, $\{ \mu_y\}$
and $\{  \mu_y'\}$  have the above properties, then $\mu_y=\mu_y'$  for $\nu$-a.e. $y\in Y$. In particular,
it follows that $T\mu_y=\mu_{S y}$  for $\nu$-a.e. $y\in Y$. The conditional  expectations and the conditional measures
are related by
\begin{equation} \label{meas3} \mathbb{E}_{\mu}(f|\pi^{-1}\mathcal{D})(x)=\int_X f\,d\mu_{\pi(x)} \ \ \text{for
$\mu$-a.e. } x\in X
\end{equation} for every  $f\in
L^1(X,\mathcal{B},\mu)$.

The product of $(X,\mathcal{B},\mu,T)$ with itself relative to  factor $(Y,\mathcal{D},\nu,S)$ is the MDS
$$(X\times X,\mathcal{B}\times \mathcal{B},\mu \times_Y\mu, T\times T),$$ where the measure $$(\mu\times_Y
\mu)(B)=\int_{Y}(\mu_y\times \mu_y)(B) \, d \nu(y), \ \forall B\in \mathcal{B}\times \mathcal{B}.$$ The
measure $\mu\times_Y \mu$ is   $T\times T$-invariant and  is supported on $$R_\pi:=\{(x_1,x_2)\in X\times
X:\pi(x_1)=\pi(x_2)\}$$ since each measure $\mu_y\times \mu_y$  is supported
on $\pi^{-1}(y)\times \pi^{-1}(y)$.

In  the following,  we are to introduce  the relative Pinsker $\sigma$-algebra for a  factor map between two
invertible Lebesgue systems. First, we recall the relative Pinsker formula from \cite[p.66]{P81}.

\begin{lem}\label{RPF} (Relative Pinsker formula) Let $(X,\mathcal{B},\mu,T)$ be an invertible Lebesgue
systems and $\mathcal{A}$ be a sub-$\sigma$ algebra of $\mathcal{B}$ with $T^{-1}\mathcal{A}=\mathcal{A}$. Then
for any $\alpha,\beta\in \mathcal{P}_X$, $$h_\mu(T,\alpha\vee
\beta|\mathcal{A})=h_\mu(T,\beta|\mathcal{A})+h_\mu(T,\alpha|\beta^T\vee \mathcal{A})$$ where
$\beta^T=\bigvee_{i\in  \mathbb{Z}}T^{-i}\beta$. \end{lem}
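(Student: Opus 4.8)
\textbf{Proof plan for Theorem \ref{RPF} (the Relative Pinsker formula).}

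The plan is to reduce the relative statement to the absolute Pinsker (Rohlin–Sinai) formula by working inside the conditional measures $\{\mu_y\}$ supplied by the disintegration relative to the factor $(Y,\mathcal{D},\nu,S)$ associated to the $T$-invariant $\sigma$-algebra $\mathcal{A}$. First I would recall the defining identity for conditional entropy along the factor: for a partition $\gamma\in\mathcal{P}_X$ one has the fiber decomposition $H_\mu(\gamma\mid\mathcal{A})=\int_Y H_{\mu_y}(\gamma)\,d\nu(y)$, and, iterating over refinements $\bigvee_{i=0}^{n-1}T^{-i}\gamma$, the analogous identity for $h_\mu(T,\gamma\mid\mathcal{A})$ — this is where one uses that $T\mu_y=\mu_{Sy}$ and that $\mathcal{A}$ is $T$-invariant, so that the skew product over $(Y,S)$ with fibers $(\pi^{-1}(y),\mu_y)$ is well defined and its relative entropy equals the integral of the fiberwise entropies. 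The point is that $h_\mu(T,\gamma\mid\mathcal{A})$ behaves exactly like an entropy of the ``random'' transformation $\{T:\pi^{-1}(y)\to\pi^{-1}(Sy)\}$ integrated over $Y$.

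Next I would invoke the absolute Pinsker formula in the fibered/relative form: for a single ergodic (or merely measure-preserving) invertible system, $h(S\vee T,\alpha\vee\beta)=h(T,\beta)+h(T,\alpha\mid\beta^T)$, where $\beta^T=\bigvee_{i\in\mathbb{Z}}T^{-i}\beta$. The content of Lemma \ref{RPF} is the relativized version of this. I would obtain it by applying the generalized Abramov–Rohlin formula (Lemma \ref{GAR}) twice, once to the tower of factors
\[
(X,\mathcal{B},\mu,T)\xrightarrow{\ \rho\ }(X/\beta^{T}\!\vee\!\mathcal{A})\xrightarrow{\ }(Y,\mathcal{D},\nu,S),
\]
and comparing the two ways of expanding $h_\mu(T,\alpha\vee\beta\mid\mathcal{A})$: on one hand $h_\mu(T,\beta\mid\mathcal{A})+h_\mu(T,\alpha\mid\beta^T\vee\mathcal{A})$, on the other hand a direct computation of $h_\mu(T,\alpha\vee\beta\mid\mathcal{A})$ via the increasing-martingale argument $H_\mu(\bigvee_{0}^{n-1}T^{-i}(\alpha\vee\beta)\mid\mathcal{A}) = H_\mu(\bigvee_{0}^{n-1}T^{-i}\beta\mid\mathcal{A}) + H_\mu(\bigvee_{0}^{n-1}T^{-i}\alpha\mid \bigvee_{0}^{n-1}T^{-i}\beta\vee\mathcal{A})$ and then showing the second term, after dividing by $n$ and letting $n\to\infty$, converges to $h_\mu(T,\alpha\mid\beta^T\vee\mathcal{A})$. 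The convergence uses that $\bigvee_{0}^{n-1}T^{-i}\beta\nearrow\bigvee_{0}^{\infty}T^{-i}\beta$ and, for the two-sided refinement, the standard trick of replacing $\bigvee_{0}^{n-1}T^{-i}\beta$ by $\bigvee_{-m}^{n-1}T^{-i}\beta$ with $m\to\infty$ controlled independently, together with continuity of conditional entropy under increasing $\sigma$-algebras (Martingale convergence) — the $T$-invariance of $\mathcal{A}$ is what lets these limiting partitions interact cleanly with the dynamics.

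Since this lemma is quoted verbatim from \cite[p.66]{P81}, in the paper itself I would simply cite it; the only thing worth spelling out is the passage between the absolute statement in \cite{P81} and the ``conditional-on-a-$\sigma$-algebra'' phrasing used here, namely that $h_\mu(T,\cdot\mid\mathcal{A})$ is the relative entropy over the factor determined by $\mathcal{A}$ and that Pinsker's formula relativizes because all its ingredients (Kolmogorov–Sinai limit, the chain rule $H(\gamma\vee\delta\mid\mathcal{C})=H(\delta\mid\mathcal{C})+H(\gamma\mid\delta\vee\mathcal{C})$, and martingale convergence of conditional entropies) hold verbatim with $\mathcal{A}$ adjoined to every conditioning. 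The main obstacle, if one insisted on a self-contained proof, is the limiting argument identifying $\lim_n \tfrac1n H_\mu(\bigvee_{0}^{n-1}T^{-i}\alpha\mid\bigvee_{0}^{n-1}T^{-i}\beta\vee\mathcal{A})$ with $h_\mu(T,\alpha\mid\beta^{T}\vee\mathcal{A})$: one must upgrade the one-sided tail $\bigvee_{0}^{\infty}T^{-i}\beta$ to the two-sided $\beta^T$, and this is exactly the place in the Rohlin–Sinai proof where the invertibility of $T$ (available here, since $(X,\mathcal{B},\mu,T)$ is an invertible Lebesgue system) and a careful $\varepsilon$-bookkeeping over how much of the past $\bigvee_{-m}^{-1}T^{-i}\beta$ is needed are used. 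Given the hypothesis that the system is an invertible Lebesgue system, all of this goes through, so the cleanest route in the paper is the citation with the one-line remark above.
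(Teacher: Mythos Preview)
Your proposal is correct and matches the paper's approach: the paper does not prove Lemma~\ref{RPF} at all but simply cites it from \cite[p.~66]{P81}, exactly as you anticipate. Your additional sketch of how the relativized argument would go (chain rule for conditional entropy, martingale convergence to pass from the one-sided to the two-sided tail $\beta^T$, using invertibility) is sound and goes beyond what the paper provides, but is not needed here.
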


Let $\pi:(X,\mathcal{B},\mu,T)\rightarrow (Z,\mathcal{A},\lambda,R)$ be a  factor map between two invertible
Lebesgue systems. Put
$$P_\mu(\pi)=\{ A\in \mathcal{B}: h_\mu(T,\{  A,X\setminus
A\}|\pi^{-1}(\mathcal{A}))=0\}.
$$ It is well known that from Lemma \ref{RPF} it follows that  $P_\mu(\pi)$ is the smallest
sub-$\sigma$-algebra of $\mathcal{B}$ containing $\{ \alpha\in \mathcal{P}_X: h_\mu(T,\alpha|\pi^{-1}(\mathcal{A}))=0\}$ and $T^{-1}(P_\mu(\pi))=P_\mu(\pi)$.  $P_\mu(\pi)$ is called the {\it
relative Pinsker  $\sigma$-algebra} of $(X,\mathcal{B},\mu,T)$ with respect  to $\pi$. Note that
$$\pi^{-1}(\mathcal{A})\subseteq P_\mu(\pi)\subseteq \mathcal{B}.
$$ There exist an invertible Lebesgue system
$(Y,\mathcal{D},\nu,S)$  and  two factor maps $$\pi_1:(X,\mathcal{B},\mu,T)\rightarrow (Y,\mathcal{D},\nu
,S),\ \ \pi_2:(Y,\mathcal{D},\nu,S)\rightarrow (Z,\mathcal{A},\lambda,R)$$ such that $\pi_2\circ \pi_1=\pi$
and $\pi_1^{-1}(\mathcal{D})=P_\mu(\pi)$ (see for example \cite{P69}). The factor map $\pi_1:(X,\mathcal{B},\mu,T)\rightarrow
(Y,\mathcal{D},\nu ,S)$ is called {\it the Pinsker factor  of $(X,\mathcal{B},\mu,T)$ with  respect to
$\pi$}.

The  following result is  well known (see for example \cite[Theorem 2.1 and Theorem 2.3]{BGKM} or \cite[Lemma
4.1]{ZGH}).
\begin{lem} \label{key-lem} Let $\pi:(X,\mathcal{B},\mu,T)\rightarrow (Z,\mathcal{A},\lambda,R)$
be a  factor map between two invertible Lebesgue systems. Let  $\pi_1:(X,\mathcal{B},\mu,T)\rightarrow
(Y,\mathcal{D},\nu ,S)$ be the Pinsker factor  of $(X,\mathcal{B},\mu,T)$ with  respect to
$(Z,\mathcal{A},\lambda,R)$ and  $\mu=\int_Y  \mu_y d \nu$ be the disintegration of $\mu$ relative to the
factor $(Y,\mathcal{D},\nu,S)$. If $(X,\mathcal{B},\mu,T)$  is ergodic and $h_\mu(T|\pi)>0$, then
\begin{enumerate} \item $\mu_y$ is non-atomic (that is $\mu_y(\{x\})=0$ for each $x\in  X$) for $\nu$-a.e.
$y\in Y$.

\item $(X\times X,\mathcal{B}\times \mathcal{B},\mu \times_Y\mu, T\times T)$ is ergodic. \end{enumerate}
 \end{lem}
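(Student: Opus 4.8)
\emph{Proof idea.} The plan is to derive both assertions from two structural properties of the relative Pinsker factor $\pi_1\colon(X,\mathcal B,\mu,T)\to(Y,\mathcal D,\nu,S)$, which I would establish first. Write $\mathcal F_1=\pi_1^{-1}(\mathcal D)=P_\mu(\pi)$. Using the relative Pinsker formula (Lemma~\ref{RPF}) and approximating $P_\mu(\pi)$ by partitions of zero relative entropy in the usual way, one gets $h_\mu(T,\alpha\,|\,\pi^{-1}(\mathcal A))=h_\mu(T,\alpha\,|\,\mathcal F_1)$ for every $\alpha\in\mathcal P_X$. Taking the supremum over $\alpha$ yields $h_\mu(T|\pi_1)=h_\mu(T|\pi)>0$; and if $\alpha\in\mathcal P_X$ is not $\mathcal F_1$-measurable then $h_\mu(T,\alpha\,|\,\mathcal F_1)>0$, since otherwise $\alpha\subseteq P_\mu(\pi)=\mathcal F_1$. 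Thus $\pi_1$ is a completely positive relative entropy extension. I would also use repeatedly that $(Y,\mathcal D,\nu,S)$ is ergodic, being a factor of the ergodic system $(X,\mathcal B,\mu,T)$, and that $T\mu_y=\mu_{Sy}$ for $\nu$-a.e.\ $y$.

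For part (1) I would argue by contradiction, assuming $\mu_y(\{x\})>0$ for some $x$ on a set of $y$ of positive $\nu$-measure. The function $g(y):=\sup_{x\in X}\mu_y(\{x\})$ is measurable and, since $T$ is invertible with $T\mu_y=\mu_{Sy}$, is $S$-invariant, hence $\nu$-a.e.\ equal to a constant $c\in(0,1]$. For a.e.\ $y$ the set $A(y):=\{x:\mu_y(\{x\})=c\}$ is then finite, and its cardinality $n_0$ is again $S$-invariant, hence a.e.\ constant; moreover the $T$-invariant set $B:=\{x:\mu_{\pi_1(x)}(\{x\})=c\}$ has $\mu(B)=\int_Y n_0c\,d\nu=n_0c$ by the disintegration, so ergodicity of $\mu$ forces $n_0c=1$. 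Hence $\mu_y$ is the uniform measure on the $n_0$-point set $A(y)$ for a.e.\ $y$, i.e.\ $\pi_1$ is $n_0$-to-one. But then for every $\beta\in\mathcal P_X$ the restriction of $\bigvee_{i=0}^{m-1}T^{-i}\beta$ to a fibre of $\pi_1$ has at most $n_0$ atoms, so by \eqref{meas3} we get $H_\mu\big(\bigvee_{i=0}^{m-1}T^{-i}\beta\,\big|\,\mathcal F_1\big)=\int_Y H_{\mu_y}\big(\bigvee_{i=0}^{m-1}T^{-i}\beta\big)\,d\nu(y)\le\log n_0$ for all $m$, whence $h_\mu(T,\beta\,|\,\mathcal F_1)=0$; taking the supremum over $\beta$ gives $h_\mu(T|\pi_1)=0$, contradicting $h_\mu(T|\pi_1)>0$.

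For part (2) I would first note that, since $Y$ is ergodic, $(X\times X,\mathcal B\times\mathcal B,\mu\times_Y\mu,T\times T)$ is ergodic if and only if every $(T\times T)$-invariant function in $L^2(\mu\times_Y\mu)$ is measurable with respect to the common factor $Y$ (embedded by $\pi_1\circ\mathrm{pr}_1=\pi_1\circ\mathrm{pr}_2$), i.e.\ if and only if the extension $\pi_1$ is relatively weakly mixing over $Y$. Were this to fail, the relative Furstenberg structure theory would produce a nontrivial intermediate factor $Y\subsetneq W\subseteq X$ that is a compact (isometric) extension of $Y$; such an extension has zero relative entropy over $Y$, so by the first paragraph the $\sigma$-algebra of $W$ would be contained in $\mathcal F_1=\pi_1^{-1}(\mathcal D)$, forcing $W=Y$ --- a contradiction. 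Hence $\pi_1$ is relatively weakly mixing and $\mu\times_Y\mu$ is ergodic. (Alternatively, a completely positive relative entropy extension is a relative $\mathrm K$-extension and one invokes the relative analogue of ``$\mathrm K\times\mathrm K$ is $\mathrm K$''; in any case this is the content of \cite[Theorems~2.1 and 2.3]{BGKM} and \cite[Lemma~4.1]{ZGH}.)

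The main obstacle is precisely the structural step in part (2): converting the failure of relative weak mixing into a zero-relative-entropy factor strictly between $Y$ and $X$. This is the relative Furstenberg--Zimmer dichotomy together with the vanishing of relative entropy on compact extensions, and it is the one place where I would lean on the cited literature rather than argue from scratch; the approximation in the first paragraph and the measurability-and-ergodicity bookkeeping in part (1) are routine.
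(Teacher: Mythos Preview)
The paper does not prove this lemma at all: it is stated as well known and referred to \cite[Theorems~2.1 and 2.3]{BGKM} and \cite[Lemma~4.1]{ZGH}. Your proposal, by contrast, supplies an actual argument, and it is correct and essentially reconstructs what is in those references.

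A couple of remarks on the comparison. Your preliminary identity $h_\mu(T,\alpha\mid\pi^{-1}\mathcal A)=h_\mu(T,\alpha\mid\mathcal F_1)$ is exactly what the paper proves later as \eqref{peq-en} in Lemma~\ref{lem-5.1}, so you are not assuming anything the paper does not also establish. For part~(1), your ergodicity-and-counting reduction (constant maximal atom mass, constant fibre cardinality, hence $H_{\mu_y}(\cdot)\le\log n_0$ uniformly) is the standard route and is cleanly executed. For part~(2), invoking the relative Furstenberg--Zimmer dichotomy and the vanishing of relative entropy on isometric extensions is precisely the structural input underlying the cited results; you are right that this is the one nontrivial step, and leaning on the literature there is appropriate given that the paper itself does. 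The alternative you mention (relative $K$-property and its self-product) is the approach emphasized in \cite{ZGH}.
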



The following result is also well known (see for example \cite[Lemma 3.3]{HYZ}).
\begin{lem} \label{key-lem-1} Let $\pi:(X,\mathcal{B},\mu,T)\rightarrow (Z,\mathcal{A},\lambda,R)$
be a  factor map between two invertible Lebesgue systems. Let
$P_{\mu}(\pi)$ be the relative Pinsker $\sigma$-algebra of
$(X,\mathcal{B},\mu,T)$ relative to $\pi$. If $\alpha \in
\mathcal{P}_X$, then
$$\lim_{m \rightarrow +\infty} h_\mu(T^m,\alpha|\pi^{-1}(\mathcal{A}))=H_\mu(\alpha|P_\mu(\pi)).$$
 \end{lem}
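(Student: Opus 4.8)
\textbf{Proof proposal for Lemma \ref{key-lem-1}.} The plan is to reduce the relative statement to a known absolute-entropy fact by passing to the factor system, and then to use the characterization of the relative Pinsker algebra $P_\mu(\pi)$ as the factor $\pi_1^{-1}(\mathcal{D})$ together with the monotone convergence of conditional entropies. First I would recall the standard identity
\[
h_\mu(T^m,\alpha\,|\,\pi^{-1}(\mathcal{A})) = h_\mu(T^m,\alpha\,|\,\pi^{-1}(\mathcal{A})\vee \alpha_{T^m}^- ) + \text{(correction)},
\]
but more to the point, the cleanest route is: for each fixed $m$, the quantity $h_\mu(T^m,\alpha\,|\,\pi^{-1}(\mathcal{A}))$ is, by definition, $\lim_{n}\frac1n H_\mu\bigl(\bigvee_{i=0}^{n-1}T^{-im}\alpha \,\big|\, \pi^{-1}(\mathcal{A})\bigr)$. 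I would rewrite this using the telescoping/chain rule for conditional static entropy,
\[
H_\mu\Bigl(\bigvee_{i=0}^{n-1}T^{-im}\alpha \,\Big|\, \pi^{-1}(\mathcal{A})\Bigr) = \sum_{i=0}^{n-1} H_\mu\Bigl(T^{-im}\alpha \,\Big|\, \pi^{-1}(\mathcal{A})\vee \bigvee_{j<i}T^{-jm}\alpha\Bigr),
\]
and use $T$-invariance of $\pi^{-1}(\mathcal{A})$ to identify each summand, so that the Cesàro limit becomes $H_\mu\bigl(\alpha \,\big|\, \pi^{-1}(\mathcal{A})\vee \bigvee_{j\ge 1} T^{jm}\alpha \bigr)$, i.e.\ the conditional entropy of $\alpha$ given $\pi^{-1}(\mathcal{A})$ joined with the ``future'' (or past, depending on orientation) generated by $\alpha$ under $T^m$.

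Next I would take $m\to\infty$. As $m$ increases, the $\sigma$-algebra $\bigvee_{j\ge 1}T^{jm}\alpha$ decreases, and I would argue that its limit (the intersection over $m$, or more precisely the relevant tail) is, when joined with $\pi^{-1}(\mathcal{A})$, exactly the relative Pinsker algebra $P_\mu(\pi)$ up to $\mu$-null sets. This is the crux: one direction (the limit $\sigma$-algebra contains $P_\mu(\pi)$ modulo null sets, or vice versa) follows from the definition $P_\mu(\pi)=\{A: h_\mu(T,\{A,X\setminus A\}\,|\,\pi^{-1}(\mathcal{A}))=0\}$ and the fact that for any $A\in P_\mu(\pi)$ the partition $\{A,X\setminus A\}$ contributes zero conditional entropy, hence $A$ is measurable with respect to the joining of $\pi^{-1}(\mathcal{A})$ with the $T^m$-future of $\alpha$ asymptotically; the other direction uses that anything measurable with respect to all of these tail algebras has zero relative entropy contribution. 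With that identification in hand, the continuity of conditional static entropy $H_\mu(\alpha\,|\,\cdot)$ along decreasing $\sigma$-algebras (a consequence of the martingale convergence theorem applied to conditional expectations $\mathbb{E}_\mu(1_A|\mathcal{C}_m)$, exactly in the spirit of the formula for $H_\mu(\alpha|\mathcal{C})$ recalled in Section 3.2) gives
\[
\lim_{m\to\infty} h_\mu(T^m,\alpha\,|\,\pi^{-1}(\mathcal{A})) = H_\mu\bigl(\alpha \,\big|\, P_\mu(\pi)\bigr),
\]
which is the assertion.

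The main obstacle I anticipate is making the middle step rigorous: precisely identifying $\lim_{m} \bigl(\pi^{-1}(\mathcal{A})\vee \bigvee_{j\ge1}T^{jm}\alpha\bigr)$ with $P_\mu(\pi)$ modulo $\mu$-null sets. This requires care because the join and the limit do not obviously commute, and one must invoke the relative Pinsker formula (Lemma \ref{RPF}) to handle the additivity bookkeeping: writing $h_\mu(T,\alpha\vee\beta|\mathcal{A}) = h_\mu(T,\beta|\mathcal{A}) + h_\mu(T,\alpha|\beta^T\vee\mathcal{A})$ is exactly the tool that lets one peel off the Pinsker part and show the residual conditional entropy vanishes in the limit. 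Since the lemma is quoted from \cite{HYZ}, I would follow that argument: reduce to the case where $\alpha$ is a generator (or exhaust by generators), apply the relative Pinsker formula to split $h_\mu(T,\alpha|\pi^{-1}(\mathcal{A}))$, and then invoke the known fact that $T^m$-towers ``forget'' everything outside $P_\mu(\pi)$ as $m\to\infty$. The monotonicity of $m\mapsto h_\mu(T^m,\alpha|\pi^{-1}(\mathcal{A}))/1$ (it is in fact non-increasing after the natural normalization, or at least the relevant subsequence is monotone) guarantees the limit exists, so the only real content is its value.
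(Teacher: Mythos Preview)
The paper does not give its own proof of this lemma; it simply cites \cite[Lemma~3.3]{HYZ}. So there is no paper proof to compare against, and your outline is essentially the standard route: rewrite $h_\mu(T^m,\alpha\mid\pi^{-1}\mathcal{A})$ as $H_\mu\bigl(\alpha\mid \pi^{-1}\mathcal{A}\vee\bigvee_{j\ge1}T^{jm}\alpha\bigr)$ via the chain rule, replace $\pi^{-1}\mathcal{A}$ by $P_\mu(\pi)$ (this is exactly the identity \eqref{peq-en} the paper proves inside Lemma~\ref{lem-5.1}, using Lemma~\ref{RPF}), and then let $m\to\infty$.

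There is, however, a genuine gap in your limit step. The $\sigma$-algebras $\bigvee_{j\ge1}T^{jm}\alpha$ are \emph{not} decreasing in $m$: for instance $T^{3}\alpha$ need not be measurable with respect to $\sigma(T^{2}\alpha,T^{4}\alpha,\dots)$, so the $m=3$ algebra is not contained in the $m=2$ one. Monotonicity only holds along divisibility chains (e.g.\ $m=k!$), and your closing remark that the sequence is ``non-increasing after the natural normalization'' is likewise not correct for the full sequence. The clean repair is a sandwich: since $jm\ge m$ for $j\ge1$, one has $\bigvee_{j\ge1}T^{jm}\alpha\subseteq\bigvee_{j\ge m}T^{j}\alpha$, and the latter \emph{is} decreasing in $m$. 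This gives the lower bound
\[
h_\mu(T^m,\alpha\mid P_\mu(\pi))=H_\mu\Bigl(\alpha\,\Big|\,P_\mu(\pi)\vee\bigvee_{j\ge1}T^{jm}\alpha\Bigr)\ \ge\ H_\mu\Bigl(\alpha\,\Big|\,P_\mu(\pi)\vee\bigvee_{j\ge m}T^{j}\alpha\Bigr),
\]
while the upper bound $h_\mu(T^m,\alpha\mid P_\mu(\pi))\le H_\mu(\alpha\mid P_\mu(\pi))$ is immediate. The right-hand side above increases to $H_\mu(\alpha\mid P_\mu(\pi))$ precisely because $\bigcap_m\bigl(P_\mu(\pi)\vee\bigvee_{j\ge m}T^{j}\alpha\bigr)=P_\mu(\pi)$ mod~$\mu$; this is the relative Rohlin--Sinai theorem (the Pinsker factor $\pi_1$ is a relatively $K$ extension, so partition tails are trivial over it). That is the ``crux'' you flagged, and it is an established fact rather than something to derive from Lemma~\ref{RPF} alone---you should invoke it explicitly rather than try to reassemble it from the additivity formula.
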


\subsection{Entropy of compact Random set}

In this subsection, we first introduce the entropy of compact random set for the random dynamical system. We then present the variational principle.

Let $\phi(n, \omega, x)$ a random dynamical system (RDS) in a Polish space $X$ over the metric dynamical system  $(\Omega, \mathcal{F},P,\theta)$. We assume that $\Omega$ is a Polish system. Note that $\phi(n, \omega, x)$ is generated by a random map $\phi(\omega)(x):=\phi(1, \omega, x)$. We assume $\phi(\omega):X\rightarrow X$ is
continuous for $P$-a.e. $\omega\in \Omega$.

\begin{de} Suppose that $\phi$ is a RDS on $X$. The map
$$\Phi:\Omega\times X\rightarrow \Omega\times X,\ \ (\omega,x)\rightarrow (\theta \omega,\phi(\omega)x)$$ is said
to be {\it skew product} induced by $\phi$. \end{de}

Let $\mathcal{F}\times \mathcal{B}_X$ be the smallest
$\sigma$-algebra  on $\Omega\times X$ with respect to which both the canonical projections
$\Pi_X:\Omega\times X\rightarrow X$ and $\pi_\Omega:\Omega\times X\rightarrow \Omega$ are measurable. A
probability measure $\mu$ on the measurable space $(\Omega\times X,\mathcal{F}\times  \mathcal{B}_X)$ is said
to have {\it marginal $P$} on $\Omega$ if $\mu\circ \pi_\Omega^{-1}=P$.  Denote by
$\mathcal{P}_P(\Omega\times X)$ the collection of such measures. Let $\mathcal{M}_P(\Omega\times X,f)$ denote
the set of $\Phi$-invariant elements of $\mathcal{P}_P(\Omega\times X)$. If $(\Omega, \mathcal{F},P,\theta)$ is
an ergodic MDS, then we may consider the set $\mathcal{E}_P(\Omega\times X)$ of ergodic elements in
$\mathcal{M}_P(\Omega\times X,f)$.

Let $K\in \mathcal{F}\times \mathcal{B}_X$ be a forward $\phi$-invariant random set with compact $\omega$-section $K(\omega)$.  Then, there  exists $\mu\in  \mathcal{M}_P(\Omega\times X)$
that is supported on $K$, i.e., $\mu(K)=1$ (see \cite{C} and \cite[Theorem 1.6.13]{A}). For simplicity, we
denote $$\mathcal{M}^K_P(\Omega\times X)=\{ \mu\in \mathcal{M}_P(\Omega\times X):\mu(K)=1\}.$$

Let $\mu\in \mathcal{M}^K_P(\Omega\times X)$. Then, there is a decomposition $d\mu(\omega,x)=d\mu_\omega(x) dP(\omega)$ of $\mu$ into its sample measures $\mu_\omega,\omega\in \Omega$  and $P$ (see \cite{A} and \cite[Section
3]{C}). $\mu_\omega(K_\omega)=1$ for $P$-a.e. $\omega\in \Omega$. Since $K$ is a Borel subset of
$\Omega\times X$, $K$ is also Polish space and the Borel $\sigma$-algebra $\mathcal{B}_K$ of $K$ is just $\{
A\cap K: A\in \mathcal{F}\times \mathcal{B}_X\}$. Thus $(K,\mathcal{B}_K,\mu, \Phi)$ is a Polish system and
$\pi_\Omega: (K,\mathcal{B}_K,\mu, \Phi)\rightarrow (\Omega, \mathcal{F},P,\theta)$ is a factor map bewteen two
Polish systems.

 {\it The entropy of RDS $\phi$} with respect to $\mu$ is defined by
$$h_\mu(\phi):=h_\mu(\Phi|\pi_\Omega)=h_\mu(\Phi|\pi_\Omega^{-1}(\mathcal{F})).$$ That is, the entropy of RDS
$\phi$ with respect to $\mu$ is the entropy of $(K,\mathcal{B}_K,\mu, \Phi)$ with respect to
$\pi_\Omega^{-1}(\mathcal{F})$. By Bogenschutz \cite{B},
$$h_\mu(\phi)=\sup\{
h_\mu(\Phi,\pi^{-1}_X\alpha|\pi_\Omega^{-1}(\mathcal{F})):\alpha \text{ is a finite Borel partition of
}X\}.$$

If in addition that $P$ is ergodic, then $$\Gamma=\{ \mu\in \mathcal{P}_P(\Omega\times X):\mu(K)=1,\  \mu \text{
invariant for }\Phi\}$$ is a compact (in the narrow topology of $\mathcal{P}_P(\Omega\times X)$, which is a
metrizable topology) and convex, and its extremal points are ergodic by Lemma 6.19 in \cite{C}, particularly
there exists $\mu\in  \mathcal{E}_P(\Omega\times X)$ that is supported on $K$. We use
$\mathcal{E}^K_P(\Omega\times X)$ to denote the set of the ergodic elements of $ \mathcal{M}^K_P(\Omega\times X)$. Then
$\mathcal{E}^K_P(\Omega\times X)$ is the set of extremal points of $  \mathcal{M}^K_P(\Omega\times X)$, which is a Borel subset of $ \mathcal{M}^K_P(\Omega\times X)$.


\medskip

Using the approach of Kifer \cite{K01} and Lemma \ref{ergodic}, one has the following result.

\begin{prop} \label{VP} (Variational principle) Let  $\phi$ be a continuous RDS on Polish space $X$ with Borel
$\sigma$-algebra $\mathcal{B}_X$ over Polish system $(\Omega, \mathcal{F},P,\theta)$. Let $K$ be a
forward $\phi$-invariant random set with compact $\omega$-section $K(\omega)$. Then $$h_{\text{top}}(\phi,K)=\sup \{h_\mu(\phi):\mu\in
\mathcal{M}_P^K(\Omega\times X,\phi)\}.$$ If in addition $(\Omega, \mathcal{F},P,\theta)$ is ergodic, then
$$h_{\text{top}}(\phi,K)=\sup \{h_\mu(\phi):\mu\in \mathcal{E}_P^K(\Omega\times X,\phi)\}.$$ \end{prop}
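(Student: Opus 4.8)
The plan is to read Proposition~\ref{VP} as the variational principle for the fiber entropy of the bundle RDS $\phi$ over the compact fibers $K(\omega)$, and to prove it by relativizing Misiurewicz's proof of the classical variational principle to the skew product $\Phi$, working throughout with the conditional entropy $h_\mu(\Phi\mid\pi_\Omega^{-1}(\mathcal F))$ and with the disintegration $d\mu(\omega,x)=d\mu_\omega(x)\,dP(\omega)$ over the base, exactly as in Kifer \cite{K01}. One proves the two inequalities separately and then extracts the ergodic refinement from Lemma~\ref{ergodic}.

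\emph{The bound $h_\mu(\phi)\le h_{\text{top}}(\phi,K)$ for $\mu\in\mathcal M_P^K(\Omega\times X,\phi)$.} Fix a finite Borel partition $\alpha$ of $X$ and $\delta>0$. Using inner regularity of the sample measures $\mu_\omega$, which are Borel probability measures concentrated on the compact sets $K(\omega)$, replace $\alpha$ by a partition $\beta=\{B_0,B_1,\dots,B_k\}$ of $X$ with $B_1,\dots,B_k$ compact and pairwise disjoint, $B_0$ open, and $H_\mu(\pi_X^{-1}\alpha\mid\pi_X^{-1}\beta)<\delta$; let $\rho>0$ be a Lebesgue number so that every ball of radius $\rho$ in $X$ meets at most two members of $\beta$. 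For $P$-a.e.\ $\omega$, pick an $(\omega,n,\rho,\phi)$-spanning subset of $K(\omega)$ of minimal cardinality $s_n(K,\omega,\rho,\phi)\le r_n(K,\omega,\rho,\phi)$; since $\phi(i,\omega,\cdot)$ maps each $d_n^\omega$-ball of radius $\rho$ into a ball of radius $\rho$, such a ball meets at most $2^n$ atoms of the fiber trace of $\bigvee_{i=0}^{n-1}\Phi^{-i}\pi_X^{-1}\beta$, and therefore
\[
H_\mu\Big(\textstyle\bigvee_{i=0}^{n-1}\Phi^{-i}\pi_X^{-1}\beta\ \Big|\ \pi_\Omega^{-1}(\mathcal F)\Big)=\int_\Omega H_{\mu_\omega}\big(\textstyle\bigvee_{i=0}^{n-1}\Phi^{-i}\pi_X^{-1}\beta\big)\,dP(\omega)\le n\log 2+\int_\Omega\log s_n(K,\omega,\rho,\phi)\,dP(\omega).
\]
Dividing by $n$ and letting $n\to\infty$ gives $h_\mu(\Phi,\pi_X^{-1}\beta\mid\pi_\Omega^{-1}(\mathcal F))\le\log 2+h_{\text{top}}(\phi,K)$; the standard power argument applied fiberwise to $\Phi^m$ (as carried out in \cite{K01}) removes the additive $\log 2$, and combining with $H_\mu(\pi_X^{-1}\alpha\mid\pi_X^{-1}\beta)<\delta$, then letting $\delta\to0$, then taking the supremum over $\alpha$ and invoking Bogenschutz's formula for $h_\mu(\phi)$ \cite{B}, yields $h_\mu(\phi)\le h_{\text{top}}(\phi,K)$.

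\emph{The bound $h_{\text{top}}(\phi,K)\le\sup_\mu h_\mu(\phi)$.} Fix $\epsilon>0$. Since $\omega\mapsto r_n(K,\omega,\epsilon,\phi)$ is measurable and the fibers $K(\omega)$ are compact, a measurable selection (Crauel \cite{C}) produces a measurable family $\{E_n(\omega)\}_{\omega\in\Omega}$ of maximal $(\omega,n,\epsilon,\phi)$-separated subsets of $K(\omega)$. Set $\sigma_n=\int_\Omega\delta_\omega\otimes\big(|E_n(\omega)|^{-1}\sum_{x\in E_n(\omega)}\delta_x\big)\,dP(\omega)$ and $\mu_n=\frac1n\sum_{j=0}^{n-1}\Phi^j_{*}\sigma_n$, which lie in $\mathcal P_P(\Omega\times X)$ and are supported on $K$. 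Because $\Omega$ is Polish and the fibers $K(\omega)$ are compact, $\mathcal M_P^K(\Omega\times X)$ is compact in the narrow topology (cf.\ \cite{C}), so along a subsequence $n_i$ realizing $\limsup_n\frac1n\int_\Omega\log r_n(K,\omega,\epsilon,\phi)\,dP$ one has $\mu_{n_i}\to\mu\in\mathcal M_P^K(\Omega\times X,\phi)$. Choose a finite Borel partition $\alpha$ of $X$ with $\mathrm{diam}\,\alpha<\epsilon$ and $\mu(\Omega\times\partial A)=0$ for all $A\in\alpha$; then $E_n(\omega)$ being $(\omega,n,\epsilon)$-separated forces each fiber atom of $\bigvee_{i=0}^{n-1}\Phi^{-i}\pi_X^{-1}\alpha$ to meet $E_n(\omega)$ in at most one point, so $H_{\sigma_n}(\bigvee_{i=0}^{n-1}\Phi^{-i}\pi_X^{-1}\alpha\mid\pi_\Omega^{-1}(\mathcal F))=\int_\Omega\log|E_n(\omega)|\,dP(\omega)$. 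The usual fiberwise convexity comparison between $\sigma_n$ and $\mu_n$, followed by $n_i\to\infty$ (using $\mu_{n_i}\to\mu$ and the boundary condition) and then letting the block length tend to infinity, gives
\[
\limsup_{n\to\infty}\frac1n\int_\Omega\log r_n(K,\omega,\epsilon,\phi)\,dP(\omega)\ \le\ h_\mu(\Phi,\pi_X^{-1}\alpha\mid\pi_\Omega^{-1}(\mathcal F))\ \le\ h_\mu(\phi).
\]
Taking the supremum over $\mu$ and then $\epsilon\to0$ yields $h_{\text{top}}(\phi,K)\le\sup\{h_\mu(\phi):\mu\in\mathcal M_P^K(\Omega\times X,\phi)\}$, which together with the previous bound gives the first equality.

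\emph{The ergodic case, and the main obstacle.} When $(\Omega,\mathcal F,P,\theta)$ is ergodic, for $\mu\in\mathcal M_P^K(\Omega\times X,\phi)$ apply the ergodic decomposition supplied by Lemma~\ref{ergodic}, $\mu=\int\mu^{\xi}\,d\tau(\xi)$ with $\mu^{\xi}\in\mathcal E_P^K(\Omega\times X,\phi)$, together with the fact that $\nu\mapsto h_\nu(\phi)=h_\nu(\Phi\mid\pi_\Omega^{-1}(\mathcal F))$ is affine and measurable on $\mathcal M_P^K(\Omega\times X,\phi)$; then $h_\mu(\phi)=\int h_{\mu^{\xi}}(\phi)\,d\tau(\xi)\le\sup_{\nu\in\mathcal E_P^K}h_\nu(\phi)$, and since $\mathcal E_P^K\subseteq\mathcal M_P^K$ this gives $\sup_{\mathcal M_P^K}h_\mu(\phi)=\sup_{\mathcal E_P^K}h_\nu(\phi)$, hence the second equality. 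I expect the steps costing the most care to be the narrow compactness of $\mathcal M_P^K(\Omega\times X)$ in the absence of local compactness of $X$ and the measurable selection of the separated sets $E_n(\omega)$ — both of which can be quoted from Crauel \cite{C} — and, for the ergodic refinement, the affinity of the fiber entropy on $\mathcal M_P^K(\Omega\times X,\phi)$.
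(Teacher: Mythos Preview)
Your proposal is correct and follows precisely the route the paper indicates: the paper does not supply a detailed proof of Proposition~\ref{VP} but simply refers to ``the approach of Kifer \cite{K01} and Lemma~\ref{ergodic}'', and your argument is exactly a relativized Misiurewicz proof in the style of \cite{K01} for the first equality together with the ergodic-decomposition identity of Lemma~\ref{ergodic} for the second. One small imprecision worth tightening: in the lower bound you invoke compactness of $\mathcal M_P^K(\Omega\times X)$ to extract a convergent subsequence of the $\mu_{n}$, but the $\mu_n$ are not yet $\Phi$-invariant, so you should instead use narrow compactness of $\{\mu\in\mathcal P_P(\Omega\times X):\mu(K)=1\}$ (this is the statement available from \cite{C}) and then argue, as usual, that the limit is $\Phi$-invariant.
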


\section{Proof of Theorem \ref{MTH1}}
In this section, we first introduce a combinatorial lemma and some results on relative Pinsker $\sigma$-algebra and conditional entropy of a finite measurable partition.
We then prove  Theorem \ref{MTH1}.

\subsection{Condition Entropy and a combinatorial lemma}

Let $\pi:(X,\mathcal{B},\mu,T)\rightarrow (Z,\mathcal{A},\lambda,R)$
be a  factor map between two invertible Lebesgue systems. Let
$\pi_1:(X,\mathcal{B},\mu,T)\rightarrow (Y,\mathcal{D},\nu ,S)$ be
the Pinsker factor  of $(X,\mathcal{B},\mu,T)$ with  respect to
$(Z,\mathcal{A},\lambda,R)$ and  $\mu=\int_Y  \mu_y d \nu$ be the
disintegration of $\mu$ relative to the factor
$(Y,\mathcal{D},\nu,S)$. It is well known that for $\nu$-a.e. $y\in
Y$, $\mu_y(\pi_1^{-1}(y))=1$.

Given $\ell\in \mathbb{N}$, let  $\alpha$ be a finite measurable
partition of $X$. Define a function
$$h_\mu(T^\ell,\alpha,y):=\lim_{n\rightarrow
+\infty}H_{\mu_y}(\alpha|\bigvee_{i=1}^n T^{-i\ell}\alpha).$$ Then
$h_\mu(T^\ell,\alpha,y)$ is a measurable function on $Y$ and
$h_\mu(T^\ell,\alpha,y)\le \log \#(\alpha)$. We have
\begin{lem}\label{lem-5.1} $\int_Y h_\mu(T^\ell, \alpha,y)d
\nu(y)=h_\mu(T^\ell,\alpha|\pi^{-1}(\mathcal{A}))$.
\end{lem}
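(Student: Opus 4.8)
The statement asserts that the fibrewise quantity $h_\mu(T^\ell,\alpha,y)=\lim_{n\to\infty} H_{\mu_y}\bigl(\alpha\,|\,\bigvee_{i=1}^n T^{-i\ell}\alpha\bigr)$ integrates over $Y$ to the conditional entropy $h_\mu(T^\ell,\alpha\,|\,\pi^{-1}(\mathcal A))$. The natural strategy is to pass to the limit inside the integral after matching each finite-level term against a conditional-entropy expression. First I would record, by the decreasing-martingale theorem for conditional expectations, that $H_{\mu_y}(\alpha\,|\,\bigvee_{i=1}^n T^{-i\ell}\alpha)$ is the limit (in $n$) of the quantities $H_{\mu_y}(\alpha\,|\,\bigvee_{i=1}^n T^{-i\ell}\alpha)$ which are themselves non-increasing in $n$ (finer conditioning lowers entropy); hence $h_\mu(T^\ell,\alpha,y)=\inf_n H_{\mu_y}(\alpha\,|\,\bigvee_{i=1}^n T^{-i\ell}\alpha)$, a decreasing limit of measurable functions bounded above by $\log\#(\alpha)$, so the monotone (or dominated) convergence theorem applies and $\int_Y h_\mu(T^\ell,\alpha,y)\,d\nu(y)=\lim_{n\to\infty}\int_Y H_{\mu_y}(\alpha\,|\,\bigvee_{i=1}^n T^{-i\ell}\alpha)\,d\nu(y)$.

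The crux is then to identify $\int_Y H_{\mu_y}(\alpha\,|\,\bigvee_{i=1}^n T^{-i\ell}\alpha)\,d\nu(y)$ with a global conditional entropy. The key point is that $\pi_1^{-1}(\mathcal D)=P_\mu(\pi)$ is $T$-invariant and the disintegration satisfies $T\mu_y=\mu_{Sy}$; consequently, for any finite partition $\beta$ of $X$ (here $\beta=\bigvee_{i=1}^n T^{-i\ell}\alpha$), one has the fibrewise identity
$$
H_{\mu_y}(\alpha\,|\,\beta)=H_{\mu}\bigl(\alpha\,|\,\beta\vee\pi_1^{-1}(\mathcal D)\bigr)(y)
$$
in the sense that integrating the fibre quantity against $\nu$ gives $H_\mu(\alpha\,|\,\beta\vee P_\mu(\pi))$. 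This is the standard "relativization" of the formula $H_\mu(\alpha)=\int H_{\mu_y}(\alpha)\,d\nu$ obtained by conditioning further on $\beta$; it follows from the disintegration identity \eqref{meas3} for conditional expectations together with the fact that conditioning on $\pi_1^{-1}(\mathcal D)$ and then on $\beta$ can be done fibrewise because $\beta$ is $\pi_1^{-1}(\mathcal D)$-measurable modulo the fibre decomposition. Therefore
$$
\int_Y H_{\mu_y}\Bigl(\alpha\,\Big|\,\bigvee_{i=1}^n T^{-i\ell}\alpha\Bigr)\,d\nu(y)
= H_\mu\Bigl(\alpha\,\Big|\,\bigvee_{i=1}^n T^{-i\ell}\alpha\vee P_\mu(\pi)\Bigr).
$$

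It remains to let $n\to\infty$ on the right-hand side and recognize the limit as $h_\mu(T^\ell,\alpha\,|\,\pi^{-1}(\mathcal A))$. Since $P_\mu(\pi)=\pi_1^{-1}(\mathcal D)\supseteq\pi^{-1}(\mathcal A)$ is the relative Pinsker algebra, the bundle $\bigvee_{i=1}^\infty T^{-i\ell}\alpha\vee P_\mu(\pi)$ can be compared with $\bigvee_{i\in\mathbb Z} T^{-i\ell}\alpha\vee\pi^{-1}(\mathcal A)$; the difference between these two $\sigma$-algebras contributes zero entropy because any set on which $T^\ell$ has zero entropy relative to $\pi^{-1}(\mathcal A)$ lies in $P_\mu(\pi)$ — more precisely, I would invoke the Kolmogorov–Sinai/relative-Pinsker identity, in the spirit of Lemma \ref{key-lem-1}, that gives $\lim_n H_\mu(\alpha\,|\,\bigvee_{i=1}^n T^{-i\ell}\alpha\vee P_\mu(\pi)) = h_\mu(T^\ell,\alpha\,|\,\pi^{-1}(\mathcal A))$. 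Equivalently one uses the abstract fact that for the Pinsker factor relative to $\pi$, conditioning on the past of $\alpha$ together with $P_\mu(\pi)$ saturates, so the decreasing limit equals the conditional entropy rate. The main obstacle is precisely this last step: carefully justifying that the extra conditioning on $P_\mu(\pi)$ (rather than merely $\pi^{-1}(\mathcal A)$) does not change the limiting value — this requires the defining property of the relative Pinsker $\sigma$-algebra and the relative Pinsker formula (Lemma \ref{RPF}), exactly as in the proof of Lemma \ref{key-lem-1}, together with care about using $T^\ell$ in place of $T$, which is harmless since $(X,\mathcal B,\mu,T^\ell)$ is again an invertible Lebesgue system and $P_\mu(\pi)$ is $T^\ell$-invariant. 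The passage to the limit under the integral, by contrast, is routine given the monotonicity established at the outset.
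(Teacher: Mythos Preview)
Your proposal is correct and follows essentially the same route as the paper: both arguments pass the limit inside the integral by monotone convergence, identify the resulting integrals with $H_\mu\bigl(\alpha\,\big|\,\bigvee_{i=1}^n T^{-i\ell}\alpha\vee P_\mu(\pi)\bigr)$ via the disintegration formula \eqref{meas3}, and then reduce to the identity $h_\mu(T^\ell,\alpha\,|\,P_\mu(\pi))=h_\mu(T^\ell,\alpha\,|\,\pi^{-1}(\mathcal A))$, which the paper proves first (exactly as you indicate, via Lemma~\ref{RPF} and the observation that $h_\mu(T^\ell,\beta\,|\,\pi^{-1}(\mathcal A))\le \ell\,h_\mu(T,\beta\,|\,\pi^{-1}(\mathcal A))=0$ for $\beta\subset P_\mu(\pi)$). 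The only cosmetic difference is that the paper packages the computation through the sequence $a_n=\int_Y H_{\mu_y}\bigl(\bigvee_{i=0}^{n-1}T^{-i\ell}\alpha\bigr)\,d\nu(y)=H_\mu\bigl(\bigvee_{i=0}^{n-1}T^{-i\ell}\alpha\,\big|\,P_\mu(\pi)\bigr)$ and uses $\lim a_n/n=\lim(a_{n+1}-a_n)$, whereas you go directly to the conditional increments; the substance is identical.
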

\begin{proof} First, we  show that
 \begin{align}\label{peq-en}
 h_\mu(T^\ell,\alpha|\pi^{-1}(\mathcal{A}))=h_\mu(T^\ell,\alpha|P_\mu(\pi)).
\end{align}
Since $\pi^{-1}(\mathcal{A})\subseteq P_\mu(\pi)$, $h_\mu(T^\ell,\alpha|\pi^{-1}(\mathcal{A}))\ge h_\mu(T^\ell,\alpha|P_\mu(\pi))$.
Conversely, let $\beta_k\nearrow P_\mu(\pi)$ be  an increasing sequence of finite measurable partitions of $X$. Since $\beta_k\subseteq P_\mu(\pi)$,
$h_\mu(T,\beta_k|\pi^{-1}(\mathcal{A}))=0$ for $k\in \mathbb{N}$.
Moreover,
\begin{align*}
h_\mu(T^\ell,\beta_k|\pi^{-1}(\mathcal{A}))&\le
h_\mu(T^\ell,\bigvee_{j=0}^{\ell-1}T^{-j}\beta_k|\pi^{-1}(\mathcal{A}))
=\ell h_\mu(T,\beta_k|\pi^{-1}(\mathcal{A}))=0
\end{align*}
for $k\in \mathbb{N}$. Hence,
$h_\mu(T^\ell,\beta_k|\pi^{-1}(\mathcal{A}))=0$ for $k\in
\mathbb{N}$.

Now by Lemma \ref{RPF}, for $m,k\in \mathbb{N}$ we have
\begin{align*}
h_\mu(T^\ell,\alpha|\pi^{-1}(\mathcal{A}))&\le h_\mu(T^{\ell},\alpha\vee
\beta_k|\pi^{-1}(\mathcal{A}))\\
&=h_\mu(T^\ell,\beta_k|\pi^{-1}(\mathcal{A}))+h_\mu(T^\ell,\alpha|\bigvee_{t\in \mathbb{Z}}T^{t\ell}\beta_k\vee
\pi^{-1}(\mathcal{A}))\\
&=h_\mu(T^\ell,\alpha|\bigvee_{t\in \mathbb{Z}}T^{t\ell}\beta_k\vee \pi^{-1}(\mathcal{A}))\le
h_\mu(T^\ell,\alpha|\bigvee_{t\in \mathbb{Z}}T^{t\ell}\beta_k)\\
&=\inf_{n\ge 1} \frac{1}{n}H_\mu(\bigvee_{i=0}^{n-1}T^{-i\ell}\alpha|\bigvee_{t\in \mathbb{Z}}T^{t\ell}\beta_k)\le \frac{1}{m}H_\mu(\bigvee_{i=0}^{m-1}T^{-i\ell}\alpha|\bigvee_{t\in \mathbb{Z}}T^{t\ell}\beta_k)\\
&\le \frac{1}{m}H_\mu(\bigvee_{i=0}^{m-1}T^{-i\ell}\alpha|\beta_k).
\end{align*}
Fixing $m\in \mathbb{N}$ and letting $k\nearrow+\infty$ in the above
inequality we have
\[h_\mu(T^\ell,\alpha|\pi^{-1}(\mathcal{A}))\le
\frac{1}{m}H_\mu(\bigvee_{i=0}^{m-1}T^{-i\ell}\alpha|P_\mu(\pi))
\]
 since $H_\mu(\bigvee_{i=0}^{m-1}T^{-i\ell}\alpha|\beta_k)\searrow
H_\mu(\bigvee_{i=0}^{m-1}T^{-i\ell}\alpha|P_\mu(\pi))$ when $k\nearrow+\infty$. Then, letting $m\rightarrow +\infty$,
we ontain  $h_\mu(T^\ell,\alpha|\pi^{-1}(\mathcal{A}))
\le h_\mu(T^{\ell},\alpha|P_\mu(\pi))$. Thus, we have the equality \eqref{peq-en}.

\medskip
Next, let $a_n=\int_Y
H_{\mu_y}(\bigvee_{i=0}^{n-1}T^{-i\ell}\alpha)d \nu(y)$.  Since $T\mu_y=\mu_{Sy}$ for $\nu$-a.e. $y\in Y$ and $\nu$ is $S$-invariant, we have
\begin{align*}
a_n&=\int_Y
H_{\mu_{Sy}}(\bigvee_{i=0}^{n-1}T^{-i\ell}\alpha)d \nu(y)=\int_Y H_{T\mu_{y}}(\bigvee_{i=0}^{n-1}T^{-i\ell}\alpha)d \nu(y) \\
&=\int_Y H_{\mu_{y}}(T^{-1}(\bigvee_{i=0}^{n-1}T^{-i\ell}\alpha))d \nu(y)=\int_Y H_{\mu_{y}}(\bigvee_{i=1}^{n}T^{-i\ell}\alpha)d \nu(y).
\end{align*}
Moreover by the
monotone convergence Theorem, we have
\begin{align*}
\lim_{n\rightarrow +\infty} (a_{n+1}-a_n)&=\lim_{n\rightarrow
+\infty}\int_Y H_{\mu_y}(\alpha|\bigvee_{i=1}^n
T^{-i\ell}\alpha)d\nu(y)\\
&=\int_Y \lim_{n\rightarrow
+\infty}H_{\mu_y}(\alpha|\bigvee_{i=1}^n T^{-i\ell}\alpha)d \nu(y)\\
&=\int_Y h_\mu(T^\ell, \alpha,y)d \nu(y).
\end{align*}
Moreover, $$\lim_{n\rightarrow
+\infty}\frac{a_n}{n}=\lim_{n\rightarrow +\infty}
(a_{n+1}-a_n)=\int_Y h_\mu(T^\ell, \alpha,y)d \nu(y).$$ By
\eqref{meas3}, we have
$a_n=H_\mu(\bigvee_{i=0}^{n-1}T^{-i\ell}\alpha|P_\mu(\pi))$. Combing
this with \eqref{peq-en},
\begin{align*}
h_\mu(T^\ell,\alpha|\pi^{-1}(\mathcal{A}))&=h_\mu(T^\ell,\alpha|P_\mu(\pi))=\lim_{n\rightarrow
+\infty}\frac{a_n}{n}\\
&=\int_Y h_\mu(T^\ell, \alpha,y)d \nu(y).
\end{align*}
This completes the proof of the lemma.
\end{proof}

The following lemma is taken from \cite{R} (Lemma 3' in $\S$4 No.2).
\begin{lem}\label{rohlin} Suppose $\mu_y$ is non-atomic for $\nu$-a.e. $y\in Y$. If $B$ is a measurable set $X$ with $\mu_y(B)\ge r>0$ for
$\nu$-a.e. $y\in Y$, then for any $0\le s\le r$ there exists a measurable set $B_s$ such that $B_s\subseteq B$ and $\mu_y(B_s)=s$
for $\nu$-a.e. $y\in Y$.
\end{lem}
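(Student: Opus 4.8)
\smallskip
\noindent
The plan is to reduce the statement to a single fibrewise observation and then ``cut the fibres at height $s$''. Write $F(y):=\mu_y(B)$. By \eqref{meas3}, $F$ is $\mathcal D$-measurable, and the hypothesis on $B$ gives $F(y)\ge r>0$ for $\nu$-a.e.\ $y\in Y$. It suffices to construct a \emph{measurable} function $u\colon B\to[0,+\infty)$ such that
\begin{equation}\label{pp-goal}
\mu_y\bigl(\{x\in B:\ u(x)\le t\}\bigr)=\min\{t,F(y)\}\qquad\text{for $\nu$-a.e.\ }y\in Y\text{ and every }t\ge 0 .
\end{equation}
Indeed, given such a $u$ and $0\le s\le r$, the set $B_s:=\{x\in B:\ u(x)\le s\}$ is measurable, is contained in $B$, and satisfies $\mu_y(B_s)=\min\{s,F(y)\}=s$ for $\nu$-a.e.\ $y$, since $s\le r\le F(y)$ a.e.

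To build $u$ I would use the structure of Lebesgue spaces once: because $\mu$ restricted to $B$ is non-atomic (it is an average of the non-atomic measures $\mu_y$), the Lebesgue space $(B,\mathcal B|_B,\mu|_B)$ is isomorphic mod $0$ to $([0,c],\mathrm{Leb})$ with $c=\mu(B)$; transporting everything through this isomorphism (and using uniqueness of disintegrations), we may assume $B=[0,c]$, $\mu|_B=\mathrm{Leb}$, $\pi_1\colon[0,c]\to Y$ measurable, and the $\mu_y$ Borel measures on $[0,c]$ disintegrating $\mathrm{Leb}$ over $(Y,\mathcal D,\nu)$. Now set $u(x):=\mu_{\pi_1(x)}\bigl([0,x]\bigr)$ for $x\in[0,c]$. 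Measurability of $u$ holds because $\Psi(y,\xi):=\mu_y([0,\xi])$ is non-decreasing (and right-continuous) in $\xi$ and, for each fixed $\xi$, equals $\mathbb E_\mu(\mathbf 1_{[0,\xi]}\mid\pi_1^{-1}\mathcal D)$ by \eqref{meas3}, hence is $\mathcal D$-measurable in $y$; a function that is monotone in one variable and measurable in the other is jointly measurable, so $u=\Psi(\pi_1(\cdot),\cdot)$ is measurable. For $\nu$-a.e.\ $y$ the measure $\mu_y$ is non-atomic with total mass $F(y)$, so $g_y(\xi):=\mu_y([0,\xi])$ is continuous and non-decreasing on $[0,c]$ with $g_y(0)=0$ and $g_y(c)=F(y)$; therefore $\{x:u(x)\le t\}=\{x:g_y(x)\le t\}$ is an initial segment $[0,x_t]$ with $g_y(x_t)=\min\{t,F(y)\}$, which is \eqref{pp-goal}.

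Some care is needed to pass from ``for each $t$, for $\nu$-a.e.\ $y$'' to ``for $\nu$-a.e.\ $y$, for every $t$'': I would first establish \eqref{pp-goal} for all rational $t$ outside a single $\nu$-null set, and then extend to arbitrary $t\ge 0$ using that both sides of \eqref{pp-goal} are non-decreasing and continuous in $t$ (continuity of $t\mapsto\mu_y(\{u\le t\})$ again being a consequence of non-atomicity of $\mu_y$).

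I expect the construction of $u$ — not the final ``cut'' — to be where essentially all the content sits, the delicate points being: (i) the one appeal to the isomorphism $(B,\mu|_B)\cong([0,c],\mathrm{Leb})$ and the compatibility of the disintegrations under it; (ii) the joint measurability of $\Psi$; and (iii) the quantifier interchange in $t$. None of these is deep, and the argument is in effect the one given by Rohlin \cite{R} in the cited place; the only twist over the classical (non-relative) Sierpi\'{n}ski-type statement is that the fibres now have length $F(y)$ depending on $y$, which is exactly why $u$ is taken to be the $\mu_{\pi_1(x)}$-measure of an initial segment rather than a fixed coordinate.
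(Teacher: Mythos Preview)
The paper does not supply a proof of this lemma; it simply cites Rohlin \cite{R} (Lemma~3$'$ in \S4, No.~2). Your argument is correct and is essentially Rohlin's: linearise the Lebesgue space $(B,\mu|_B)$ as an interval, form the fibrewise distribution function $g_y(\xi)=\mu_y([0,\xi])$, and cut at height~$s$.

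One small expositional point worth cleaning up: the displayed identity ``$\{x:u(x)\le t\}=\{x:g_y(x)\le t\}$'' is not literally true, since the left-hand side does not depend on~$y$; what you need (and what you actually use) is that the two sets coincide on the fibre $\pi_1^{-1}(y)$, which suffices because $\mu_y$ is supported there. With that read, the computation $\mu_y(\{u\le t\})=\mu_y([0,x_t])=g_y(x_t)=\min\{t,F(y)\}$ goes through exactly as you wrote. The points you flag as delicate --- transporting the disintegration through the mod-$0$ isomorphism (using that any $\mu$-null set is $\mu_y$-null for $\nu$-a.e.~$y$), joint measurability of $\Psi$, and the quantifier swap in~$t$ --- are handled correctly.
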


\begin{lem}\label{lem-5.3} Suppose $\mu_y$ is non-atomic for $\nu$-a.e. $y\in Y$. If $U_1,U_2\in \mathcal{B}$ with $\mu\times_Y \mu(U_1\times U_2)>0$, then there exist
a measurable set $A\subset Y$ with $\nu(A)>0$, a positive integer $r>2$ and a measurable partition $\alpha=\{ B_1,B_2,\cdots,B_r\}$ of $X$ such that
$\pi^{-1}_1(A)\cap B_i\subseteq U_i$, $i=1,2$ and $\mu_y(B_j)=\frac{1}{r}$, $j=1,2,\cdots,r$, for $\nu$-a.e. $y\in Y$.
\end{lem}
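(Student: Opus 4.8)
\textbf{Proof plan for Lemma \ref{lem-5.3}.}

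The plan is to pass from the positive measure of $U_1 \times U_2$ under $\mu \times_Y \mu$ to the existence, on a positive-measure set of fibers $y$, of a uniform lower bound for both $\mu_y(U_1)$ and $\mu_y(U_2)$, and then to cut out disjoint subsets of $U_1$ and $U_2$ of equal conditional mass using the Rohlin-type Lemma \ref{rohlin}. First I would write $\mu \times_Y \mu(U_1 \times U_2) = \int_Y \mu_y(U_1)\mu_y(U_2)\, d\nu(y) > 0$, so the measurable function $y \mapsto \mu_y(U_1)\mu_y(U_2)$ is positive on a set of positive $\nu$-measure. Choose $\delta > 0$ small enough that $A_0 := \{ y \in Y : \mu_y(U_1) > \delta \text{ and } \mu_y(U_2) > \delta \}$ has $\nu(A_0) > 0$; such a $\delta$ exists because the function is a pointwise product and the sets $\{\mu_y(U_i) > 1/k\}$ exhaust (up to null sets) the set where the product is positive. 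Then pick an integer $r$ with $r > 2$ and $\tfrac{2}{r} < \delta$, so that on $A_0$ we have $\mu_y(U_1) > \tfrac{1}{r}$ and $\mu_y(U_2) > \tfrac{1}{r}$.

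Next I would localize Lemma \ref{rohlin} to the fibers over $A_0$. Working inside the sub-probability-space structure over $A_0$ (equivalently, replacing $B$ in Lemma \ref{rohlin} by $U_i \cap \pi_1^{-1}(A_0)$ and noting $\mu_y$ is non-atomic for $\nu$-a.e.\ $y$), there is a measurable set $B_1 \subseteq U_1 \cap \pi_1^{-1}(A_0)$ with $\mu_y(B_1) = \tfrac{1}{r}$ for $\nu$-a.e.\ $y \in A_0$ and $\mu_y(B_1) = 0$ off $A_0$; likewise a measurable $B_2 \subseteq U_2 \cap \pi_1^{-1}(A_0)$ with $\mu_y(B_2) = \tfrac{1}{r}$ a.e.\ on $A_0$ and $0$ elsewhere. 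After shrinking $A_0$ to a positive-measure subset $A$ if necessary (to handle the a.e.\ exceptional null sets uniformly and to guarantee $\pi_1^{-1}(A) \cap B_i = \pi_1^{-1}(A) \cap U_i$-measurable pieces of the right mass), we get $\pi_1^{-1}(A) \cap B_i \subseteq U_i$ for $i = 1,2$, and $\mu_y(B_1) = \mu_y(B_2) = \tfrac 1r$ for $\nu$-a.e.\ $y \in A$. Since $B_1 \subseteq U_1 \cap \pi_1^{-1}(A)$ and $B_2 \subseteq U_2 \cap \pi_1^{-1}(A)$, and these pieces may still overlap on fibers outside $A$, I would in fact construct $B_1$ and $B_2$ to be disjoint: build $B_1$ first, then apply Lemma \ref{rohlin} again to $U_2 \setminus B_1$, whose conditional mass is still at least $\tfrac1r$ on $A$ after further shrinking $\delta$ (take $\tfrac{3}{r} < \delta$ at the outset so $\mu_y(U_2 \setminus B_1) \ge \mu_y(U_2) - \tfrac1r > \tfrac{2}{r}$).

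Finally I would extend $\{B_1, B_2\}$ to a full partition of $X$ into $r$ atoms each of conditional mass $\tfrac{1}{r}$. The complement $X \setminus (B_1 \cup B_2)$ has $\mu_y$-measure $1 - \tfrac2r$ on $\nu$-a.e.\ fiber, which equals $\tfrac{r-2}{r}$; applying Lemma \ref{rohlin} successively $r-2$ times (peeling off a set of conditional mass exactly $\tfrac1r$ each time, which is legitimate since at stage $j$ the remaining set has conditional mass $\tfrac{r-2-j+1}{r} \ge \tfrac1r$ on a.e.\ fiber) produces measurable sets $B_3, \dots, B_r$ with $\mu_y(B_j) = \tfrac1r$ for $\nu$-a.e.\ $y$. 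Then $\alpha = \{B_1, \dots, B_r\}$ is the desired partition.

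The main obstacle I expect is the careful bookkeeping of the ``$\nu$-a.e.\ $y$'' qualifiers across the iterated applications of Lemma \ref{rohlin}: each application introduces a $\nu$-null exceptional set of fibers, and one must take the union of finitely many such null sets and intersect $A$ with its complement so that the final conclusion holds on a single positive-measure set $A$ for \emph{all} the atoms simultaneously. A secondary point of care is verifying that the lower bounds on conditional mass are genuinely uniform in $y$ over $A$ (not merely positive fiberwise), which is why the threshold $\delta$ and the choice of $r$ with $\tfrac{r-1}{r}$-room must be fixed \emph{before} invoking Lemma \ref{rohlin}, rather than fiber by fiber.
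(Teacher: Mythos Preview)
There is a genuine gap. The lemma requires $\mu_y(B_j)=\tfrac1r$ for $\nu$-a.e.\ $y\in Y$, not merely for $y\in A$. In your construction you take $B_1\subseteq U_1\cap\pi_1^{-1}(A_0)$ and $B_2\subseteq (U_2\setminus B_1)\cap\pi_1^{-1}(A_0)$, so by design $\mu_y(B_1)=\mu_y(B_2)=0$ for every $y\notin A_0$. The sentence ``the complement $X\setminus(B_1\cup B_2)$ has $\mu_y$-measure $1-\tfrac2r$ on $\nu$-a.e.\ fiber'' is therefore false: over $Y\setminus A_0$ the complement has full mass $1$. Peeling off $r-2$ further pieces of mass $\tfrac1r$ then leaves a residual of mass $\tfrac2r$ over $Y\setminus A_0$, and your $\{B_1,\dots,B_r\}$ does not have equal conditional masses there. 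Note also that Lemma~\ref{rohlin} as stated demands a uniform lower bound on $\nu$-a.e.\ fiber over all of $Y$; your ``localization to $A_0$'' sidesteps that hypothesis and is precisely what causes the trouble.

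The paper's remedy is to avoid localizing. Instead of carving $B_1$ out of $U_1\cap\pi_1^{-1}(A)$, one carves it out of the fattened set
\[
D_1=\pi_1^{-1}(Y\setminus A)\cup\bigl(\pi_1^{-1}(A)\cap U_1\bigr),
\]
which satisfies $\mu_y(D_1)\ge\tfrac2r$ for \emph{every} $y\in Y$ (it is the whole fiber when $y\notin A$). Now Lemma~\ref{rohlin} applies globally and yields $B_1\subseteq D_1$ with $\mu_y(B_1)=\tfrac1r$ for $\nu$-a.e.\ $y\in Y$; the containment $\pi_1^{-1}(A)\cap B_1\subseteq U_1$ survives because the fattening lies entirely over $Y\setminus A$. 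One repeats with $D_2=\bigl(\pi_1^{-1}(Y\setminus A)\cup(\pi_1^{-1}(A)\cap U_2)\bigr)\setminus B_1$ to obtain $B_2$ disjoint from $B_1$, and then your final paragraph works verbatim since now $\mu_y\bigl(X\setminus(B_1\cup B_2)\bigr)=1-\tfrac2r$ genuinely holds for $\nu$-a.e.\ $y$. With this device the threshold $\mu_y(U_i)\ge\tfrac2r$ on $A$ suffices; your stronger $\tfrac3r<\delta$ is not needed.
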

\begin{proof} Set $C_i=\{ y\in Y: \mu_y(U_i)>0\}$, $i=1,2$. Since
$$0<\mu\times_Y \mu(U_1\times U_2)=\int_Y \mu_y(U_1)\times \mu_y(U_2) d \nu(y),$$
we have $\nu(C_1\cap C_2)>0$. Thus there exist a positive integer $r>2$ and a measurable set $A\subseteq C_1\cap C_2$ such that
$\nu(A)>0$ and $\mu_y(U_i)\ge \frac{2}{r}$ for any $y\in A$, $i=1,2$.

Next we construct $B_j$, $j=1,2,\cdots,r$ by induction. Taking
$$D_1=\pi_1^{-1}(Y\setminus A)\cup (\pi^{-1}_1(A)\cap U_1),$$
then $\mu_y(D_1)\ge \frac{2}{r}$ for $\nu$-a.e. $y\in Y$. By Lemma \ref{rohlin}, there exists a measurable set $B_1\subseteq D_1$ such that $\mu_y(B_1)=\frac{1}{r}$
for $\nu$-a.e. $y\in Y$, and $B_1\cap \pi^{-1}_1(A) \subseteq D_1\cap \pi^{-1}_1(A)\subseteq U_1$.

Next, by the induction hypothesis, i.e., there are  measurable sets $B_k$, $(1\le k\le r)$
satisfying
\begin{itemize}
\item[{$(1)_k$}] $B_k\cap B_i=\emptyset$ for $1\le i\le k-1$;

\item[{$(2)_k$}] $\mu_y(B_k)=\frac{1}{r}$ for $\nu$-a.e. $y\in Y$;

\item[{$(3)_k$}] $B_k\cap \pi_1^{-1}(A)\subseteq U_k$ (when $k>2$, we set $U_k=X$).
\end{itemize}
If $k=r$, we are done. If $k<r$, we set
$$D_{k+1}=\begin{cases} \pi_1^{-1}(X\setminus A)\cup (\pi^{-1}_1(A)\cap (U_{2})\setminus  B_1) \ \ &\text{ if }k=1
\\
X\setminus \bigcup_{i=1}^k B_i \ \ &\text{ if }k>1.
\end{cases}
$$
It is clear that $\mu_y(D_{k+1})\ge \frac{1}{r}$ for $\nu$-a.e. $y\in Y$. Using Lemma \ref{rohlin} again, there exists a measurable set
$B_{k+1}\subseteq D_{k+1}$ such that $\mu_y(B_{k+1})=\frac{1}{r}$ for $\nu$-a.e. $y\in Y$. Then $B_{k+1}$ satisfies $(1)_{k+1}$, $(2)_{k+1}$ and $(3)_{k+1}$. Therefore, by induction, we obtain a measurable partition $\alpha=\{ B_1,B_2,\cdots,B_r\}$ of $X$ such that
    $\mu_y(B_k)=\frac{1}{r}$, $k=1,2,\cdots,r$, for $\nu$-a.e. $y\in Y$ and $B_i\cap \pi_1^{-1}(A)\subseteq U_i$, $i=1,2$.
This completes the proof of the lemma.\end{proof}
To prove our main theorem \ref{MTH1}, we also need the following consequence of Karpovsky-Milman-Alon's generalization of
the Sauer-Perles-Shelah lemma \cite{Al,KM,Sau,Sh}.
\begin{lem} (\cite{KM}) \label{com-lem} Given $r\ge 2$ and $\lambda > 1$ there is a constant $e > 0$ such that, for all $n \in \mathbb{N}$, if $S \subseteq \{1, 2, \cdots , r\}^{\{1,2,\cdots,n\}}$ satisfies
$|S|\ge ((r-1)\lambda)^n$ then there is an $I\subset \{1, 2,\cdots ,
n\}$ with $|I|\ge en$ and $S|_I = \{1, 2, \cdots, r\}^I$, i.e., for
any $u\in \{1,2,\cdots,r\}^I$ there is $s\in S$ with $s(j)=u(j)$ for
any $j\in I$.
\end{lem}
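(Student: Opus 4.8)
The plan is to obtain Lemma~\ref{com-lem} from the multi-symbol Sauer--Perles--Shelah inequality of Karpovsky--Milman and Alon, followed by a standard binomial-tail estimate. The combinatorial input I would invoke is: if $S\subseteq\{1,2,\cdots,r\}^{\{1,2,\cdots,n\}}$ has the property that $S|_I\neq\{1,2,\cdots,r\}^I$ for \emph{every} $I\subseteq\{1,2,\cdots,n\}$ with $|I|=k$, then
$$|S|\le\sum_{i=0}^{k-1}\binom{n}{i}(r-1)^{n-i}.$$
This is the content of \cite{KM}; it is proved by down-compressing the coordinates one at a time, which yields the recursion $f(n,k)\le(r-1)f(n-1,k)+f(n-1,k-1)$ for the largest admissible cardinality $f(n,k)$, followed by induction, the extremal set being $\{x:|\{j:x_j=1\}|<k\}$ (this is why the exponent is $n-i$ rather than $i$).

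First I would fix the constant $e$. Let $H(t)=-t\log_2 t-(1-t)\log_2(1-t)$ be the binary entropy function; it is continuous and increasing on $[0,\tfrac12]$ with $H(0)=0$. Since $\lambda>1$ we have $\log_2\lambda>0$, so there is $e\in(0,\tfrac12)$ with $2^{H(e)}<\lambda$, and I claim this $e$ works for every $n\in\mathbb{N}$. Indeed, suppose $|S|\ge((r-1)\lambda)^n$ but, contrary to the conclusion, $S|_I\neq\{1,2,\cdots,r\}^I$ for all $I$ with $|I|\ge en$. Put $k=\lceil en\rceil$, so $k\ge en$ and $k-1<en<\tfrac n2$. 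Then $S|_I\neq\{1,2,\cdots,r\}^I$ for all $I$ with $|I|=k$, so the quoted inequality together with $r-1\ge1$ gives
$$|S|\le\sum_{i=0}^{k-1}\binom{n}{i}(r-1)^{n-i}\le(r-1)^n\sum_{i=0}^{k-1}\binom{n}{i}.$$
By the classical estimate $\sum_{i\le\alpha n}\binom{n}{i}\le 2^{nH(\alpha)}$ for $0\le\alpha\le\tfrac12$ (which follows from $1=(\alpha+(1-\alpha))^n\ge 2^{-nH(\alpha)}\sum_{i\le\alpha n}\binom{n}{i}$), applied with $\alpha=\tfrac{k-1}{n}\le e$ and using the monotonicity of $H$, we get $\sum_{i=0}^{k-1}\binom{n}{i}\le 2^{nH(e)}$, whence
$$((r-1)\lambda)^n\le|S|\le(r-1)^n2^{nH(e)}<(r-1)^n\lambda^n=((r-1)\lambda)^n,$$
a contradiction. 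Hence some $I\subseteq\{1,2,\cdots,n\}$ with $|I|\ge en$ satisfies $S|_I=\{1,2,\cdots,r\}^I$, which is the assertion.

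The whole substance lies in the multi-symbol Sauer--Perles--Shelah bound; once it is imported, the rest is just the counting above. Accordingly, the only points that need care are (i) using the bound in the correct form --- exponent $(r-1)^{n-i}$, not $(r-1)^i$, which is what makes the trivial estimate $|S|\le(r-1)^n$ for sets shattering nothing sharp and therefore $((r-1)\lambda)^n$ the right threshold --- and (ii) the small floor/ceiling bookkeeping in choosing $k=\lceil en\rceil$, keeping $e<\tfrac12$ so that the binomial-tail estimate applies uniformly in $n$.
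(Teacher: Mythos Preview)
The paper does not give its own proof of this lemma: it is stated with the citation \cite{KM} and described as ``a consequence of Karpovsky--Milman--Alon's generalization of the Sauer--Perles--Shelah lemma,'' with no further argument. Your proposal is therefore not competing with any proof in the paper; rather, you are supplying the derivation that the authors take for granted.

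That said, your derivation is correct and is exactly the standard way to extract the density form of the statement from the raw Karpovsky--Milman bound. You quote the bound in the right form (exponent $(r-1)^{n-i}$), you choose $e\in(0,\tfrac12)$ so that $2^{H(e)}<\lambda$, and the contradiction via the binomial-tail estimate goes through cleanly. The bookkeeping with $k=\lceil en\rceil$ and $k-1<en<n/2$ is fine, and your parenthetical justification of the entropy bound $\sum_{i\le\alpha n}\binom{n}{i}\le 2^{nH(\alpha)}$ is the usual one-line proof. There is nothing to correct.
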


\subsection{Proof of Theorem \ref{MTH1}}

  Assume that
$ h_{top}(\phi,K)>0$. By Proposition \ref{VP} there exists $\mu\in \mathcal{E}_P^K(\Omega\times X)$  such
that $h_\mu(\phi)>0$. Since $K$ is a Borel subset of $\Omega\times X$, $K$ is also Polish space and the Borel
$\sigma$-algebra $\mathcal{B}_K$ of $K$ is just $\{ A\cap K: A\in  \mathcal{F}\times \mathcal{B}_X\}$. Thus
$(K,\mathcal{B}_K,\mu, \Phi)$ is an  ergodic Polish system and $\pi_\Omega: (K,\mathcal{B}_K,\mu,
\Phi)\rightarrow (\Omega, \mathcal{F},P,\theta)$ is a factor map between two Polish systems. Recall that the
entropy of RDS $\phi$ with respect to $\mu$  is given by
$$h_\mu(\phi):=h_\mu(\Phi|\pi_\Omega)=h_\mu(\Phi|\pi_\Omega^{-1}(\mathcal{F})).$$

Let $\mathcal{B}_P$ be the completion  of  $\mathcal{F}$ with  respect to  $P$. Let $\mathcal{B}_\mu$ be the
completion  of  $\mathcal{B}_K$ with  respect to  $\mu$. Then $\pi_\Omega:(K,\mathcal{B}_\mu,\mu,
\Phi)\rightarrow (\Omega,  \mathcal{B}_P,P,\theta)$  is a factor map between  two Lebesgue  systems. Since
$\mathcal{B}_P=\mathcal{F}$ (mod $P$) and  $\mathcal{B}_\mu=\mathcal{B}_K$ (mod  $\mu$), it  is  clear  that
$$
h_\mu(\Phi|\pi_\Omega^{-1}(\mathcal{B}_P))=h_\mu(\Phi|\pi_\Omega^{-1}(\mathcal{F}))=h_\mu(\phi).
$$ Let
$\Pi_K:(\bar{K},\bar{\mathcal{B}_{\mu}},\bar{\mu}, \bar{\Phi})\rightarrow (K,\mathcal{B}_\mu,\mu, \Phi)$ be
the  natural extension of $(K,\mathcal{B}_\mu,\mu, \Phi)$. Let $$\Pi_{\Omega}: (\bar{\Omega},
\bar{\mathcal{B}_{P}},\bar{P},\bar{\theta})\rightarrow (\Omega, \mathcal{B}_P,P,\theta)$$ be the natural
extension of $(\Omega, \mathcal{B}_P,P,\theta)$. Define $\bar{\pi}:\bar{K}\rightarrow \bar{\Omega}$ by
$\bar{\pi}(((\omega_i,x_i))_{i\in \mathbb{Z}})=(\omega_i)_{i\in \mathbb{Z}}$ for $((\omega_i,x_i))_{i\in
\mathbb{Z}}\in \bar{K}$.  Then $\bar{\pi}:(\bar{K},\bar{\mathcal{B}_{\mu}},\bar{\mu},\bar{\Phi})\rightarrow
(\bar{\Omega},\bar{\mathcal{B}_{P}},\bar{P},\bar{\theta})$ is a factor map between two invertible Lebesgue
systems and the following diagram is  commutative:
\begin{displaymath}
\begin{CD}
(K,\mathcal{B}_\mu,\mu,\Phi)  @< \Pi_K<<  (\bar{K},\bar{\mathcal{B}_{\mu}},\bar{\mu},\bar{\Phi})\\ @VV\pi_\Omega V
@VV\bar{\pi} V  \\ (\Omega,\mathcal{B}_P,P,\theta) @<  \Pi_\Omega<<
(\bar{\Omega},\bar{\mathcal{B}_{P}},\bar{P},\bar{\theta})\\
 \end{CD}
\end{displaymath}

Now, we  show
\begin{align}\label{na=en-1}
h_{\bar{\mu}}(\bar{\Phi}|\bar{\pi})=h_\mu(\phi)>0.
\end{align}

In  fact,  since $\pi_{\Omega}\circ \Pi_K=\Pi_\Omega \circ \bar{\pi}$, we have
\begin{align}\label{main-eq-1}
h_{\bar{\mu}}(\bar{\Phi}|\pi_{\Omega}\circ \Pi_K)=h_{\bar{\mu}}(\bar{\Phi}|\Pi_\Omega\circ  \bar{\pi}).
\end{align}
 By
Lemma \ref{GAR} and Lemma  \ref{na=zero}, we have
\begin{align}\label{main-eq-2}
h_{\bar{\mu}}(\bar{\Phi}|\pi_{\Omega}\circ
\Pi_K)=h_{\bar{\mu}}(\bar{\Phi}|\Pi_K)+h_\mu(\Phi|\pi_{\Omega})=h_\mu(\Phi|\pi_{\Omega})=h_\mu(\phi)
 \end{align}
 and
\begin{align}\label{main-eq-3} h_{\bar{\mu}}(\bar{\Phi}|\Pi_\Omega\circ
\bar{\pi})=h_{\bar{\mu}}(\bar{\Phi}|\bar{\pi})+h_{\bar{P}}(\bar{\theta}|\Pi_\Omega)=h_{\bar{\mu}}(\bar{\Phi}|\bar{\pi}).
\end{align} Using \eqref{main-eq-1},\eqref{main-eq-2} and  \eqref{main-eq-3}, we get \eqref{na=en-1}.

Next, let $P_{\bar{\mu}}(\bar{\pi})$ be the relative Pinsker $\sigma$-algebra of
$(\bar{K},\bar{\mathcal{B}_{\mu}},\bar{\mu},\bar{\Phi})$ relative  to $\bar{\pi}$. Since
${\bar{\Phi}}^{-1}(P_{\bar{\mu}}(\bar{\pi}))=P_{\bar{\mu}}(\bar{\pi})$  and
$$\bar{\pi}^{-1}(\mathcal{\bar B}_{{P}})\subseteq P_{\bar{\mu}}(\bar{\pi})\subseteq \bar{\mathcal{B}_{\mu}},$$
there exist an invertible  Lebesgue system  $(Y,\mathcal{D},\nu,S)$ and  two factor maps
$$\pi_1:(\bar{K},\bar{\mathcal{B}_{\mu}},\bar{\mu},\bar{\Phi})\rightarrow (Y,\mathcal{D},\nu,S),\
\pi_2:(Y,\mathcal{D},\nu,S)\rightarrow  (\bar{\Omega},\bar{\mathcal{B}_{P}},\bar{P},\bar{\theta})$$ between
invertible  Lebesgue systems such that  $\pi_2\circ \pi_1=\bar{\pi}$  and
$\pi_1^{-1}(\mathcal{D})=P_{\bar{\mu}}(\bar{\pi})$ (mod $\bar{\mu}$). That is, $\pi_1$ is the Pinsker factor
of $(\bar{K},\bar{\mathcal{B}_{\mu}},\bar{\mu},\bar{\Phi})$ relative  to $\bar{\pi}$.

Let  $\bar{\mu}=\int_Y \bar{\mu}_y d \nu(y)$ be the disintegration
of $\bar{\mu}$ relative to the  factor $(Y,\mathcal{D},\nu,S)$. Let
$$(\bar{K}\times \bar{K},\bar{\mathcal{B}_{\mu}}\times
\bar{\mathcal{B}_{\mu}},\bar{\mu} \times_Y\bar{\mu},
\bar{\Phi}\times \bar{\Phi})$$  be the product of
$(\bar{K},\bar{\mathcal{B}_{\mu}},\bar{\mu},\bar{\Phi})$ with itself
relative to a  factor $(Y,\mathcal{D},\nu,S)$. Recall that
$$(\bar{\mu}\times_Y \bar{\mu})(B)=\int_{Y}(\bar{\mu}_y\times
\bar{\mu}_y)(B) \, d \nu(y), \ B\in \bar{\mathcal{B}_{\mu}}\times
\bar{\mathcal{B}_{\mu}}.$$ Since $(K,\mathcal{B}_{\mu},\mu,\Phi)$ is
ergodic, we have that
$(\bar{K},\bar{\mathcal{B}_{\mu}},\bar{\mu},\bar{\Phi})$ is ergodic.
By \eqref{na=en-1}, $h_{\bar{\mu}}(\bar{\Phi}|\bar{\pi})>0$.  By
Lemma \ref{key-lem}, we have \begin{enumerate} \item[{(a1).}]
$\bar{\mu}_y$ is non-atomic for $\nu$-a.e. $y\in Y$.

\item[{(a2).}] $(\bar{K}\times \bar{K},\bar{\mathcal{B}_{\mu}}\times \bar{\mathcal{B}_{\mu}},\bar{\mu}
    \times_Y\bar{\mu}, \bar{\Phi}\times \bar{\Phi})$ is ergodic.
\end{enumerate}

 Since $(X,d)$ is a separable metric space, there exists a countable dense set
$X'$ of $X$. Let $\Gamma=\{ \overline{B}(x,t):x\in X' \text{ and }
t\in \mathbb{Q}, t>0\}$, where $\overline{B}(x,t)=\{z\in X:
d(z,x)\le t\}$. Clearly, $\Gamma$ is a countable set and each
element of $\Gamma$ is a non-empty closed set of $X$. Let $\Theta=\{
\{U_1,U_2\}: U_1,U_2\in \Gamma, d(U_1,U_2)>0\}$. Then $\Theta$ is
also a countable set. For $U\subseteq X$, we define
\begin{align}\label{blde}
\widetilde{U}=\{ ((\omega_i,x_i))_{i\in \mathbb{Z}}\in \overline{K}: x_0\in U\}.
\end{align}

Recall that $\Delta_{\bar{K}}=\{ (\bar{k},\bar{k}):\bar{k}\in
\bar{K}\}$. It is clear that when $\bar{\mu}_y$ is non-atomic, i.e.,
$\bar{\mu}_y(\{  \bar{k}\})=0$ for $\bar{k}\in \bar{K}$, we have
$$\bar{\mu}_y\times
\bar{\mu}_y(\Delta_{\bar{K}})=\int_{\bar{K}}\left(
\int_{\bar{K}}1_{\Delta_{\bar{K}}}(\bar{k}_1,\bar{k}_2)
d\bar{\mu}_y(\bar{k}_2) \right)
d\bar{\mu}_y(\bar{k}_1)=\int_{\bar{K}}\bar{\mu}_y(\{\bar{k}_1\}) d
\bar{\mu}_y(\bar{k}_1)=0
$$ where
$1_{\Delta_{\bar{K}}}$ is the characterization function  of
$\Delta_{\bar{K}}$. By (a1), for $\nu$-a.e. $y\in Y$, $\bar{\mu}_y$
is non-atomic. Thus for  $\nu$-a.e. $y\in Y$, $\bar{\mu}_y\times
\bar{\mu}_y(\Delta_{\bar{K}})=0$. Moreover,
\begin{align*}
\bar{\mu}\times_Y \bar{\mu}(\Delta_{\bar{K}})=\int_Y
\bar{\mu}_y\times \bar{\mu}_y(\Delta_{\bar{K}}) d\nu(y)=0.
\end{align*}
Thus $\bar{\mu}\times_Y \bar{\mu}(\bar{K}\times
\bar{K}\setminus\Delta_{\bar{K}})=1$. Moreover, since $\bar{K}\times
\bar{K}\setminus\Delta_{\bar{K}}=\bigcup_{i\in
\mathbb{Z}}\bigcup_{\{U_1,U_2\}\in \Theta} (\bar{\Phi}\times
\bar{\Phi})^i \left( \widetilde{U_1}\times \widetilde{U_2} \right)$
and $\bar{\mu}\times_Y \bar{\mu}$ is $\bar{\Phi}\times
\bar{\Phi}$-invariant, there exists $\{U_1,U_2\}\in \Theta$ such
that $\bar{\mu}\times_Y \bar{\mu}(\widetilde{U_1}\times
\widetilde{U_2})>0$.

\medskip
In the following we want to show that $\{U_1,U_2\}$ is a weak
Horseshoe of $(\phi,K)$. First, it is clear that $U_1$ and $U_2$
are non-empty, closed and bounded subsets of $X$ and $d(U_1,U_2)>0$ by the definition
of $\Theta$. We divide the remainder of the proof into three steps.

\medskip \noindent{\it Step 1.}
Since $\bar{\mu}_y$ is non-atomic for $\nu$-a.e. $y\in Y$ and
$\bar{\mu}\times_Y \bar{\mu}(\widetilde{U_1}\times
\widetilde{U_2})>0$, by Lemma \ref{lem-5.3}, there exist a measurable
set $A\subset Y$ with $\nu(A)>0$, a positive integer $r>2$, and a
measurable partition $\alpha=\{ B_1,B_2,\cdots,B_r\}$ of $\bar{K}$
such that $\pi^{-1}_1(A)\cap B_i\subseteq \widetilde{U_i}$, $i=1,2$
and $\bar{\mu}_y(B_j)=\frac{1}{r}$, $j=1,2,\cdots,r$, for $\nu$-a.e.
$y\in Y$.

By Lemma \ref{key-lem-1}
$$\lim_{m\rightarrow +\infty} h_{\bar{\mu}}(\bar{\Phi}^m,\alpha|\bar{\pi}^{-1}(\bar{\mathcal{B}_P}))=H_{\bar{\mu}}(\alpha|P_{\bar{\mu}}(\bar{\pi}))
=\sum_{j=1}^r \int_Y -\bar{\mu}_y(B_j)\log \bar{\mu}_y(B_j) d
\nu(y)=\log r.$$ Thus, there is an $\ell>0$ such that
$h_{\bar{\mu}}(\bar{\Phi}^\ell,\alpha|\bar{\pi}^{-1}(\bar{\mathcal{B}_P}))>\nu(Y\setminus
A) \cdot \log r+\nu(A)\cdot \log (r-1)$.

Let
$c:=\frac{1}{3}\left(h_{\bar{\mu}}(\bar{\Phi}^\ell,\alpha|\bar{\pi}^{-1}(\bar{\mathcal{B}_P}))-\left(
\nu(Y\setminus A) \cdot \log r+\nu(A)\cdot \log
(r-1)\right)\right)$. Then $c>0$. Recall that we defined $\nu$-a.e.
on $Y$ a function
$$h_{\bar{\mu}}(\bar{\Phi}^\ell,\alpha,y):=\lim_{n\rightarrow
+\infty}H_{\bar{\mu}_y}(\alpha|\bigvee_{i=1}^n
\bar{\Phi}^{-i\ell}\alpha).$$ By Lemma \ref{lem-5.1}, we have
$\int_Y
h_{\bar{\mu}}(\bar{\Phi}^\ell,\alpha,y)d\nu(y)=h_{\bar{\mu}}(\bar{\Phi}^\ell,\alpha|\bar{\pi}^{-1}(\bar{\mathcal{B}_P}))$.
For $y\in Y$, we set
$$\delta(y):=(h_{\bar{\mu}}(\bar{\Phi}^\ell,\alpha,y)-\log (r-1))\cdot 1_A(y).$$
Since $0\le h_{\bar{\mu}}(\bar{\Phi}^\ell,\alpha,y)\le \log r$,
$\delta(y)$ is a bounded measurable function on $Y$ and
\begin{align}\label{key-yle}
1_A(y)\ge \frac{\delta (y)}{\log (\frac{r}{r-1})}.
\end{align}
Now,
\begin{align*}
\int_Y \delta(y) d\nu(y)&=\int_A
h_{\bar{\mu}}(\bar{\Phi}^\ell,\alpha,y)d\nu(y)-\nu(A)\log (r-1)\\
&=\int_Y h_{\bar{\mu}}(\bar{\Phi}^\ell,\alpha,y)d\nu(y)-
\int_{Y\setminus A}
h_{\bar{\mu}}(\bar{\Phi}^\ell,\alpha,y)d\nu(y)-\nu(A)\log (r-1)\\
&\ge
h_{\bar{\mu}}(\bar{\Phi}^\ell,\alpha|\bar{\pi}^{-1}(\bar{\mathcal{B}_P}))-
\nu(Y\setminus A) \log r-\nu(A)\log (r-1)\\
&=3c.
\end{align*}
By the Birkhoff ergodic Theorem,
$\frac{1}{m}\sum_{i=0}^{m-1}1_A(S^{i\ell}y)$ converges $\nu$-a.e. to
a function $1_A^*\in L^1(\nu)$;
$\frac{1}{m}\sum_{i=0}^{m-1}\delta(S^{i\ell}y)$ converges $\nu$-a.e.
to a function $\delta^*\in L^1(\nu)$ and $\int_Y \delta^*(y) d
\nu(y)=\int_Y \delta(y) d \nu(y)\ge 3c$. Note that  for any
$i\in \mathbb{Z}$, $\bar{\Phi}^i \bar{\mu}_y=\bar{\mu}_{S^iy}$ and
$\bar{\Phi}^i \alpha \subseteq \bar{\mathcal{B}}_{\bar{\mu}_y}$ for
$\nu$-a.e. $y\in Y$, where $\bar{\mathcal{B}}_{\bar{\mu}_y}$ is the
completion of $\bar{\mathcal{B}_{\mu}}$ under $\bar{\mu}_y$.

Next we define the measurable subset $D$ of $Y$ such that $y\in D$
if and only if the following holds
\begin{itemize}
\item  $\lim_{m\rightarrow +\infty}\frac{1}{m}\sum_{i=0}^{m-1}1_A(S^{i\ell}y)=1_A^*(y)$;

\item  $\lim_{m\rightarrow +\infty}\frac{1}{m}\sum_{i=0}^{m-1}\delta(S^{i\ell}y)=\delta^*(y)\ge 2c$;

\item  for any
$i\in \mathbb{Z}$, $\bar{\Phi}^i \bar{\mu}_y=\bar{\mu}_{S^iy}$ and
$\bar{\Phi}^i \alpha \subseteq \bar{\mathcal{B}}_{\bar{\mu}_y}$.
\end{itemize}
Since $\int_Y \delta^*(y)d\nu(y)\ge 3c$, one has $\nu(D)>0$. For $y\in D$,
let
$$\mathcal{S}(y)=\{ \ell k: k\in \mathbb{Z}_+, S^{\ell k}y\in
A\}:=\{ a_1(y)<a_2(y)<\cdots\}.$$ Then $S^{a_i(y)}y\in A$ for $i=1,2,\cdots$. Put $c_1=\frac{c}{\ell \log
(\frac{r}{r-1})}$. Using \eqref{key-yle},
\begin{align}\label{density}
\begin{aligned}
&\hskip0.5cm \lim_{m\rightarrow +\infty} \frac{|\mathcal{S}(y)\cap \{0,1,\cdots,m-1\}|}{m}\\
&=\lim_{m\rightarrow +\infty} \frac{1}{\ell m} \sum_{i=0}^{m-1}1_A(S^{i\ell}y)\ge \lim_{m\rightarrow +\infty} \frac{1}{\ell m} \sum_{i=0}^{m-1}\frac{\delta(S^{i\ell}y)}{\log (\frac{r}{r-1})}\\
&=\frac{\delta^*(y)}{\ell \log (\frac{r}{r-1})}\ge \frac{3c}{\ell \log (\frac{r}{r-1})}\\
&=3c_1>0.
\end{aligned}
\end{align}
Let $E_0=D$ and $E_i=S^{-i}D\setminus \bigcup_{j=0}^{i-1}S^{-j}D$
for $i=1,2,\cdots$. Since $(Y,\mathcal{D},\nu,S)$ is ergodic and
$\nu(D)>0$,
$E:=\bigcup_{i=0}^{+\infty}S^{-i}D=\bigcup_{i=0}^{+\infty}E_i$ has
full measure, i.e., $\nu(E)=1$.

\medskip \noindent{\it Step 2.} Given $z\in E$. There exists unique $i\in \{0,1,2,\cdots\}$ and unique $y\in D$ such that
$z\in E_i$ and $z=S^{-i}(y)$. Put $a_j(z)=i+a_j(y)$ for $j=1,2,\cdots$ and
$\mathcal{S}(z)=\{a_1(z)<a_2(z)<\cdots\}$.
Clearly,
\begin{align}\label{s-density}
\lim_{m\rightarrow +\infty} \frac{|\mathcal{S}(z)\cap
\{0,1,\cdots,m-1\}|}{m}=\lim_{m\rightarrow +\infty}
\frac{|\mathcal{S}(y)\cap \{0,1,\cdots,m-1\}|}{m}\ge 3c_1>0
\end{align}
by \eqref{density}.

For $k\in \mathbb{N}$, let
$$M_z(k):=\{ (j_1,j_2,\cdots,j_k)\in \{1,2,\cdots,r\}^{\{1,2,\cdots,k\}}:\bar{\mu}_z(\pi_1^{-1}(z)\cap (\bigcap_{t=1}^k \bar{\Phi}^{-a_t(z)}B_{j_t}))>0\}.$$
Since $\bar{\mu}_z(\pi_1^{-1}(z))=1$, $\bar{\Phi}^u \bar{\mu}_y=\bar{\mu}_{S^uy}$ for any $u\in
\mathbb{Z}$ and $\ell |a_t(y)$ for any $t\in \mathbb{N}$, we have
\begin{align*}
\log |M_z(k)|&\ge H_{\bar{\mu}_z}(\bigvee_{t=1}^k
\bar{\Phi}^{-a_t(z)}\alpha)=H_{\bar{\mu}_{S^{-i}y}}(\bigvee \limits_{t=1}^k \bar{\Phi}^{-(i+a_t(y))}\alpha) \\
&=H_{\bar{\mu}_{y}}(\bigvee_{t=1}^k \bar{\Phi}^{-a_t(y)}\alpha
)=H_{\bar{\mu}_y}(\bar{\Phi}^{-a_1(y)}\alpha|\bigvee_{t=2}^k
\bar{\Phi}^{-a_t(y)}\alpha)+H_{\bar{\mu}_y}(\bigvee_{t=2}^k
\bar{\Phi}^{-a_t(y)}\alpha)\\&=H_{\bar{\mu}_{S^{a_1(y)}y}}(\alpha|\bigvee_{t=2}^k
\bar{\Phi}^{-(a_t(y)-a_1(y))}\alpha)+H_{\bar{\mu}_y}(\bigvee_{t=2}^k
\bar{\Phi}^{-a_t(y)}\alpha)\\
&\ge
h_{\bar{\mu}}(\bar{\Phi}^\ell,\alpha,S^{a_1(y)}y)+H_{\bar{\mu}_y}(\bigvee_{t=2}^k
\bar{\Phi}^{-a_t(y)}\alpha)\\
&\cdots \cdots \cdots \cdots \cdots \cdots\\
&\ge \sum_{t=1}^k h_{\bar{\mu}}(\bar{\Phi}^\ell,\alpha,S^{a_t(y)}y).
\end{align*}
Since $\frac{a_k(y)}{\ell}\ge k-1$ and
$$\lim_{k\rightarrow +\infty}
\frac{1}{\frac{a_k(y)}{\ell}+1}\sum_{j=0}^{\frac{a_k}{\ell}}\delta(S^{\ell
j }y)=\delta^*(y)\ge 2c,$$ there exists $N_y\in \mathbb{N}$ such
that when $k\ge N_y$,
$\frac{1}{\frac{a_k(y)}{\ell}+1}\sum \limits_{j=0}^{\frac{a_k}{\ell}}\delta(S^{\ell
j }y)\ge c$. Thus for $k\ge N_y$,
\begin{align*}
\log |M_z(k)|&\ge \sum_{t=1}^k
h_{\bar{\mu}}(\bar{\Phi}^\ell,\alpha,S^{a_t(y)}y)\\
&=k\cdot \log (r-1)+\sum_{j=0}^{\frac{a_k}{\ell}}(h_{\bar{\mu}}(\bar{\Phi}^\ell,\alpha,S^{\ell j}y)-\log(r-1))\cdot 1_A(S^{\ell j}y)\\
&=k\cdot \log (r-1)+\sum_{j=0}^{\frac{a_k}{\ell}}\delta(S^{\ell j
}y)\\
&\ge k\cdot \log (r-1)+(\frac{a_k(y)}{\ell}+1)c\\
&\ge k\cdot \log (r-1)+k c.
\end{align*}
Let $\lambda=2^c$. Then $\lambda>1$ and $$|M_z(k)|\ge
((r-1)\lambda)^k$$ for $k\ge N_y$. By Lemma \ref{com-lem}, there
exists constant $e>0$ ($e$ is just dependent on $r$ and $\lambda$)
such that for $k\ge N_y$ there exists $I_{k,z}\subseteq
\{1,2,\cdots,k \}$ such that $|I_{k,z}|\ge ek$ and
$M_z(k)|_{I_{k,z}}=\{1,2,\cdots,r\}^{I_{k,z}}$.

Put $L_{k,z}=\{ a_t(z):t\in I_{k,z}\}$. Then for $k\ge
\max\{N_y,\frac{1}{e}\}$, we have $|L_{k,z}|=|I_{k,z}|\ge ek\ge 1$,
$J_{k,z}\subseteq \{ 0,1,\cdots,a_k(z)\}$ and for any $s\in
\{1,2,\cdots,r\}^{L_{k,z}}$, one has
\begin{align*}
\bar{\mu}_z(\pi_1^{-1}(z)\cap (\bigcap_{j\in L_{k,z}}
\bar{\Phi}^{-j}B_{s(j)}))>0.
\end{align*}
By \eqref{s-density}, there exists $W_z\in \mathbb{N}$ such that
when $m\ge W_z$, one has
$$|\mathcal{S}(z)\cap \{0,1,\cdots,m-1\}|\ge c_1m \ge \max\{N_y,\frac{1}{e}\}$$

For $m\ge W_z$, let $k_m(z)=|\mathcal{S}(z)\cap
\{0,1,\cdots,m-1\}|$. Then $k_m(z)\ge c_1m \ge
\max\{N_y,\frac{1}{e}\}$ and
$$\mathcal{S}(z)\cap \{0,1,\cdots,m-1\}=\{a_1(z),a_2(z),\cdots,a_{k_m(z)}(z)\}.$$
Let $J_{m,z}=L_{k_m(z),z}$. Then $J_{m,z}\subseteq
\{a_1(z),a_2(z),\cdots,a_{k_m(z)}(z)\}\subseteq \{0,1,\cdots,m-1\}$,
$$|J_{m,z}|=|L_{k_m(z),z}|\ge ek_m(z)\ge ec_1m$$ and for  any $s\in
\{1,2,\cdots,r\}^{J_{m,z}}$, one has
\begin{align*}
\bar{\mu}_z(\pi_1^{-1}(z)\cap (\bigcap_{j\in J_{m,z}}
\bar{\Phi}^{-j}B_{s(j)}))>0.
\end{align*}

\medskip \noindent{\it Step 3.} Let $b=ec_1$. Then $b>0$ and for $P$-a.e. $\omega\in \Omega$,
there exists $M_{b,\omega}\in \mathbb{N}$ such that for any natural
number $m\ge M_{b,\omega}$ we can find $J_m\subset
\{0,1,\cdots,m-1\}$ with $|J_m|\ge bm$, and for any $s\in
\{1,2\}^{J_m}$, there exists $x_s\in K(\omega)$ with
$\phi(j,\omega,x_s)\in U_{s(j)}\cap K(\theta^j \omega)$ for any
$j\in J_m$.

\medskip Note that $$\Pi_\Omega\circ \pi_2:(Y,\mathcal{D},\nu,S)\rightarrow
(\Omega,\mathcal{B}_P, P, \theta)$$ is a factor map between
two Lebesgue systems. Since $\nu(E)=1$, there exists $\Omega_1\in
\mathcal{B}_P$ satisfying $P(\Omega_1)=1$ and $(\Pi_\Omega\circ
\pi_2)^{-1}(\omega)\cap E\neq \emptyset$ for  each $\omega\in
\Omega_1$.

Given $\omega\in \Omega_1$. By the definition of $\Omega_1$, there
exists $z\in E$ such that $\Pi_\Omega\circ \pi_2(z)=\omega$.
Let $\pi_2(z)=(\omega_i(z))_{i\in \mathbb{Z}}\in \bar{\Omega}$. Then $\omega=\Pi_\Omega ((\omega_i(z))_{i\in \mathbb{Z}})=\omega_0(z)$.
By Step
2, we can find $W_z\in \mathbb{N}$ such that for any $m\ge W_z$,
there exists
$$J_{m,z}\subseteq \{a_1(z),a_2(z),\cdots,a_{k_m(z)}(z)\}\subseteq
\{0,1,\cdots,m-1\}$$ with $|J_{m,z}|\ge bm$, and for  any $s\in
\{1,2,\cdots,r\}^{J_{m,z}}$, one has
\begin{align}\label{key-non-empty-2}
\bar{\mu}_z(\pi_1^{-1}(z)\cap (\bigcap_{j\in J_{m,z}}
\bar{\Phi}^{-j}B_{s(j)}))>0.
\end{align}

We take $M_{b,\omega}=W_z$. For $m\ge M_{b,\omega}$, take $J_m=J_{m,z}$. Then $J_m\subseteq \{0,1,\cdots,m-1\}$ with $|J_m|\ge bm$.
Next we are ready to show that for any $s\in
\{1,2\}^{J_m}$, there exists $x_s\in K(\omega)$ with
$\phi(j,\omega,x_s)\in U_{s(j)}\cap K(\theta^j \omega)$ for any
$j\in J_m$.

\medskip
Given $s\in
\{1,2\}^{J_{m}}$. By \eqref{key-non-empty-2} we know $\bar{\mu}_z(\pi_1^{-1}(z)\cap (\bigcap_{j\in J_{m}}
\bar{\Phi}^{-j}B_{s(j)}))>0$. Thus $\pi_1^{-1}(z)\cap (\bigcap_{j\in J_{m}}
\bar{\Phi}^{-j}B_{s(j)})\neq \emptyset$ and we can find $((\omega_i^s,x_i^s))_{i\in \mathbb{Z}}\in \pi_1^{-1}(z)\cap (\bigcap_{j\in J_{m}}
\bar{\Phi}^{-j}B_{s(j)})\subseteq \bar{K}$. On the hand  $$\bar{\pi}(((\omega_i^s,x_i^s))_{i\in \mathbb{Z}})=(\omega_i^s)_{i\in \mathbb{Z}}\in \bar{\Omega}.$$
On the other hand
$$\bar{\pi}(((\omega_i^s,x_i^s))_{i\in \mathbb{Z}})=\pi_2\circ \pi_1(((\omega_i^s,x_i^s))_{i\in \mathbb{Z}})
=\pi_2(z)=(\omega_i(z))_{i\in \mathbb{Z}}\in \bar{\Omega}.$$
Hence $(\omega_i^s)_{i\in \mathbb{Z}}=(\omega_i(z))_{i\in \mathbb{Z}}$. Particularly, $\omega_0^s=\omega_0(z)=\omega$. Thus $x_0^s\in K(\omega)$ since
$(\omega,x_0^s)=(\omega_0,x_0^s)\in K$. Take $x_s=x_0^s$. Then $x_s\in K(\omega)$.
Recall that $$\bar{K}=\{ ((\omega_i,x_i))_{i\in \mathbb{Z}}\in K^{\mathbb{Z}}: \Phi(\omega_i,x_i)=(\omega_{i+1},x_{i+1}) \text{ for }i\in \mathbb{Z}\}.$$
Since $((\omega_i^s,x_i^s))_{i\in \mathbb{Z}}\in \bar{K}$ and $(\omega_0^s,x_0^s)=(\omega,x_s)$,
one has $\omega_i^s=\theta^i\omega$ and $x_i^s=\phi(i,\omega,x_s)\in K(\theta^i\omega)$ for any $i\in \mathbb{Z}$.
Finally, we show that for any $j\in J_m$,
$\phi(j,\omega,x_s)\in U_{s(j)}\cap K(\theta^j \omega)$.

\medskip

\medskip
Given $j\in J_m$. Since $z\in E$, there exists unique $i_*\ge 0$  and unique $y\in D$ such that $z\in E_{i_*}$ and $z=S^{-i_*}(y)$.
Note that $$J_m=J_{m,z}\subseteq \{a_1(z),a_2(z),\cdots,a_{k_m(z)}(z)\}=\{ i_*+a_1(y),i_*+a_2(y),\cdots,i_*+a_{k_m(z)}(y)\},$$
we have $j-i_*\in \{a_1(y),a_2(y),\cdots,a_{k_m(z)}(y)\}$ and so $S^{j-i_*}(y)\in A$, that is, $S^j(z)\in A$.
Since $((\omega_i^s,x_i^s))_{i\in \mathbb{Z}}\in \pi_1^{-1}(z)\cap
\bar{\Phi}^{-j}B_{s(j)})$, one has
\begin{align*}
\bar{\Phi}^j(((\omega_i^s,x_i^s))_{i\in \mathbb{Z}})&\in \bar{\Phi}^j(\pi_1^{-1}(z))\cap B_{s(j)}=\pi_1^{-1}(S^jz)\cap B_{s(j)}\\
&\subseteq \pi_1^{-1}(A)\cap B_{s(j)}\subseteq \widetilde{U_{s(j)}}.
\end{align*}
Moreover as $\bar{\Phi}^j(((\omega_i^s,x_i^s))_{i\in \mathbb{Z}})=((\omega_{i+j}^s,x_{i+j}^s))_{i\in \mathbb{Z}}$, using \eqref{blde}
we know $x_j^s\in U_{s(j)}$. Combing this with $x_j^s=\phi(j,\omega,x_s)\in K(\theta^j\omega)$, this shows
$\phi(j,\omega,x_s)\in U_{s(j)}$ and completes the proof of the theorem.
\hfill$\square$

\section{Proof of Theorem \ref{MTH3}}
In this section, we prove a stronger result than Theorem \ref{MTH1} when  $(\Omega, \mathcal{F}, {P},\theta)$ is a
{compact metric system} and $K(\omega),\,\omega\in\Omega$ is { a strongly compact random set}. We show that there is a full horseshoe for $(\phi, K)$ instead of a weak horseshoe. As a consequence, we have a full horseshoe for a continuous deterministic dynamical systems $\phi$ on a compact invariant set $K$ with positive topological entropy.

 \vskip0.1in

\noindent{\bf Proof of Theorem \ref{MTH3}. } By Theorem~\ref{MTH1} there exist  subsets $U_1,U_2$ of $X$, a constant $b>0$ and $\Omega_0\in \mathcal{F}$ with $P(\Omega_0)=1$ such that
\begin{enumerate}
\item $U_1$ and $U_2$ are non-empty closed bounded subsets of $X$ and
$d(U_1,U_2)>0$.

\item  for each $\omega\in \Omega_0$,
there exists $M_{b,\omega}\in \mathbb{N}$ such that for any natural
number $m\ge M_{b,\omega}$, there is a subset $J_m(w_*)\subset
\{0,1,2,\cdots,m-1\}$ with $|J_m(\omega)|\ge bm$(positive density), and for any $s\in
\{1,2\}^{J_m(\omega}$, there exists $x_s\in K(\omega)$ with
$\phi(j,\omega,x_s)\in U_{s(j)}\cap K(\theta^j \omega)$ for any
$j\in J_m(\omega)$.
\end{enumerate}

Let $\mathbb{N}_0=\mathbb{N}\cup \{0\}$ and $T:\Omega\times \{0,1\}^{\mathbb{N}_0}\rightarrow \Omega\times \{0,1\}^{\mathbb{N}_0}$ be the map such that  $T(\omega,u)=(\theta\omega, \sigma u)$
for $\omega\in\Omega$ and $u=(u(n))_{n\in \mathbb{N}_0}\in \{0,1\}^{\mathbb{N}}$, where $\sigma u=(u_{n+1})_{n\in \mathbb{N}_0}$ is the left shift map on
$\{0,1\}^{\mathbb{N}_0}$. Clearly $\Omega\times \{0,1\}^{\mathbb{N}_0}$ is a compact metric space and $T$ is a continuous self-map on $\Omega\times \{0,1\}^{\mathbb{N}_0}$.

Consider the subset $Y$ of $\Omega\times \{0,1\}^{\mathbb{N}_0}$ such that $(\omega,u)\in Y$ if and only if
 for any $s\in \{1,2\}^J$,put $J=\{n\in \mathbb{N}_0:u(n)=1\}$,  there exists $x_s\in K(\omega)$ with
$\phi(j,\omega,x_s)\in U_{s(j)}\cap K(\theta^j \omega)$ for any
$j\in J$. We have the following claim.

\medskip

\noindent{\it Claim.} $T(Y)\subseteq Y$ and $Y$ is a closed subset of $\Omega\times \{0,1\}^{\mathbb{N}_0}$.

\begin{proof}[Proof of claim] If $(\omega,u)\in Y$, let $J=\{n\in \mathbb{N}_0:u(n)=1\}$ and $J_\sigma=\{n\in \mathbb{N}_0:(\sigma u)(n)=1\}$. Then
$J_\sigma+1\subset J$. Now for any $t\in \{1,2\}^{J_\sigma}$, we can find $s\in \{1,2\}^J$ such that $t(n)=s(n+1)$ for $n\in J_\sigma$. As $(\omega,u)\in Y$,
there exists $x_s\in K(\omega)$ with $\phi(j,\omega,x_s)\in U_{s(j)}\cap K(\theta^j \omega)$ for any $j\in J$.
Thus let $x_t=\phi(\omega)(x_s)$. Then $x_t\in K(\theta \omega)$ and $\phi(j,\theta\omega,x_t)=\phi(j+1,\omega,x_s)\in U_{s(j+1)}\cap K(\theta^j \theta\omega)$ for any $j\in J_\sigma$.
Hence $(\theta\omega,\sigma u)\in Y$. This implies $T(Y)\subseteq Y$.

Next let $(\omega^i,u^i)\in Y$ and $\lim_{i\rightarrow +\infty}(\omega^i,u^i)=(\omega,u)$ in $\Omega\times \{0,1\}^{\mathbb{N}_0}$. Let $J_i=\{n\in \mathbb{N}_0:u^i(n)=1\}$ for $i\in \mathbb{N}$
and $J=\{n\in \mathbb{N}_0: u(n)=1\}$. If $J=\emptyset$, then it is clear that $(\omega,u)\in Y$ by the definition of $Y$. Now suppose $J\neq\emptyset$, and let $N=\min\{n\in J\}$.

Given $s\in \{1,2\}^J$. Note that $\lim_{i\rightarrow +\infty}u^i=u$. There exist $1\le i_1<i_2<\cdots$ such that
$$J_{i_k}\cap \{0,1,\cdots,N+r\}=J\cap \{0,1,\cdots,N+r\}$$ for each $1\le r\le k$.
Now for each $k$, we take $s^k\in \{1,2\}^{J_{i_k}}$ such that $s^k(j)=s(j)$ for $j\in J\cap \{0,1,\cdots,N+k\}$.
As $(\omega^{i_k},u^{i_k})\in Y$, there exists $x_{s^k}\in K(\omega^{i_k})$ with
$\phi(j,\omega^{i_k},x_{s^k})\in U_{s^k(j)}\cap K(\theta^j \omega^{i_k})$ for any
$j\in J_{i_k}$.  Without of loss generality, we assume that the limit $\lim_{k\rightarrow +\infty}x_{s_k}$ exists (if necessarily we take subsequence) and
let $x_s:=\lim_{k\rightarrow +\infty}x_{s^k}$.

Firstly, as  $x_{s^k}\in K(\omega^{i_k})$ and the function
$\omega\mapsto\inf_{y\in M(\omega)}d(x,y)$
is lower semi-continuous  for any $x\in X$, one has
\begin{align*}
\inf_{y\in K(\omega)} d(x_s,y)&\le \liminf_{k\rightarrow +\infty} \inf_{y\in K(\omega^{i_k})} d(x_s,y)\\
&\le  \liminf_{k\rightarrow +\infty} \inf_{y\in K(\omega^{i_k})} \big( d(x_s,x_{s^k})+d(x_{s^k},y)\big)\\
&=0.
\end{align*}
Thus $x_s\in K(\omega)$.

Given $n\in J$. As the map $(\omega,x)\mapsto \phi(\omega)x$ is a continuous map, one has $$\phi(n,\omega,x_s)=\lim_{k\rightarrow +\infty}\phi(n,\omega^{i_k},x_{s^k}).$$ Combing this with the fact that
$$\phi(n,\omega^{i_k},x_{s^k})\in U_{s^k(n)}\cap K(\theta^n \omega^{i_k})=U_{s(n)}\cap K(\theta^n \omega^{i_k})$$ for $k\ge n$ and the function
$\omega\mapsto\inf_{y\in M(\omega)}d(x,y)$
is lower semi-continuous  for any $x\in X$,
one has $\phi(n,\omega,x_s)\in U_{s(n)}$ and
\begin{align*}
\inf_{y\in K(\theta^n\omega)} d(\phi(n,\omega,x_s),y)&\le \liminf_{k\rightarrow +\infty} \inf_{y\in K(\theta^n\omega^{i_k})} d(\phi(n,\omega,x_{s}),y)\\
&\le  \liminf_{k\rightarrow +\infty} \inf_{y\in K(\theta^n\omega^{i_k})} \big( d(\phi(n,\omega,x_s),\phi(n,\omega^{i_k},x_{s^k}))+d(\phi(n,\omega^{i_k},x_{s^k}),y)\big)\\
&=0.
\end{align*}
Thus $\phi(n,\omega,x_s)\in U_{s(n)}\cap K(\theta^n \omega)$ for $n\in J$. This implies $(\omega,u)\in Y$. Hence $Y$ is a closed subset of $\Omega\times \{0,1\}^{\mathbb{N}}$.
This completes the proof of claim.
\end{proof}
Let $G_P$ be the set of all generic points of $(\Omega, \mathcal{F}, {P},\theta)$, that is,
$\omega\in G_P$ if and only if $P=\lim_{m\rightarrow +\infty} \frac{1}{m}\sum_{j=0}^{m-1}\delta_{\theta^j \omega}$ in the weak*-topology.
By Birkhoff pointwise ergodic Theorem, $P(G_P)=1$ as $(\Omega, \mathcal{F}, {P},\theta)$ is ergodic.

As $P(G_P\cap \Omega_0)=1$, we take $\omega_*\in G_P\cap \Omega_0$. By the above (2),  for any natural
number $m\ge M_{b,\omega_*}$, there is a subset $J_m(w_*)\subset
\{0,1,2,\cdots,m-1\}$ with $|J_m(\omega_*)|\ge bm$(positive density), and for any $s\in
\{1,2\}^{J_m(\omega_*)}$, there exists $x_s\in K(\omega_*)$ with
$\phi(j,\omega_*,x_s)\in U_{s(j)}\cap K(\theta^j \omega_*)$ for any
$j\in J_m(\omega_*)$.

For $m\ge M_{b,\omega_*}$. Let $v_m\in \{0,1\}^{\mathbb{N}_0}$ with $v_m(n)=1$ if and only if $n\in J_m(\omega_*)$, that is, $\{n\in \mathbb{N}_0:v_m(n)=1\}=J_m(\omega_*)$.
Clearly $(\omega_*,v_m)\in Y$.  Assume $\mu_m =\frac{1}{m}\sum_{j=0}^{m-1}\delta_{T^j(\omega_*,v_m)}$  for $m\ge M_{b,\omega_*}$. Then
each $\mu_m$ is a Borel probability on $Y$.
Let $\mu= \lim_{i\rightarrow +\infty}\mu_{m_i}$ be a limit point of $\{\mu_m\}$ in the weak*-topology. Clearly, $\mu$ is a $T$-invariant Borel probability on $Y$.

Let $\pi:Y\rightarrow \Omega$ be the projection of coordinate. Then $\mu\circ \pi^{-1}=P$. In fact, for any continuous function $f\in C(\Omega)$,
\begin{align*}
\int_\Omega f d\mu\circ \pi^{-1}&=\int_Y f\circ \pi d\mu=\lim_{i\rightarrow +\infty} \int_Y f\circ \pi d\mu_{m_i}\\
&=\lim_{i\rightarrow +\infty}\frac{1}{m_i}\sum_{j=0}^{m_i-1}f\circ \pi(T^j(\omega_*,v_m))\\
&=\lim_{i\rightarrow +\infty}\frac{1}{m_i}\sum_{j=0}^{m_i-1}f(\theta^j\omega_*)\\
&=\int_\Omega f d P
\end{align*}
the last equality comes from the fact $\omega_*$ is a generic point of $(\Omega, \mathcal{F}, {P},\theta)$.
Thus $\mu\circ \pi^{-1}=P$.

Let $[1]=\{(\omega,u)\in Y: u(0)=1 \}$. Then $[1]$ is a closed and open subset of $Y$. Note that $T^j(\omega,u)=(\theta^j\omega,\sigma^ju)$, one has
\begin{align*}
\mu([1])&=\lim_{i\rightarrow +\infty}\mu_{m_i}([1])=\lim_{i\rightarrow +\infty}\frac{1}{m_i}|\{j\in \{0,1,\cdots,m_i-1\}:(\sigma^jv_{m_i})(j)=1\}|\\
&=\lim_{i\rightarrow +\infty}\frac{1}{m_i}|\{j\in \{0,1,\cdots,m_i-1\}:v_{m_i}(j)=1\}|\\
&=\lim_{i\rightarrow +\infty}\frac{1}{m_i}|J_{m_i}(\omega_*)|\\
&\ge b.
\end{align*}

By the ergodic decomposition, $\mu\circ \pi^{-1}=P$ and the fact that $(\Omega, \mathcal{F}, {P},\theta)$ is ergodic, we know that
there exists a $T$-invariant ergodic Borel probablity measure $\nu$ on $Y$ with $\nu([1])\ge \mu([1])\ge b$ and $\nu\circ \pi^{-1}=P$.
Let $G_\nu$ be  the set of all generic points of $(Y,\mathcal{B}_Y,\nu,T)$, that is,
$(\omega,u)\in G_\nu$ if and only if $\nu=\lim_{m\rightarrow +\infty} \frac{1}{m}\sum_{j=0}^{m-1}\delta_{T^j(\omega,u)}$ in the weak*-topology.
Then $G_\nu$ is a Borel subset of $Y$, and $\nu(G_\nu)=1$ by Birkhoff pointwise ergodic Theorem.

Let $\Omega_1=\pi(G_\nu)$. Then $\Omega_1$ is a $P$-measurable set since it is the continuous image of a Borel set.
Thus $P(\Omega_1)=\nu\circ \pi^{-1}(\Omega_1)=\nu(\pi^{-1}(\Omega_1))=1$.
Now for $\omega \in \Omega_1$, there exists $u_\omega\in \{0,1\}^{\mathbb{N}_0}$ such that
$(\omega,u_\omega)\in G_\nu$. Let $J(\omega)=\{n\in \mathbb{N}_0:u_\omega(n)=1\}$.
As $(\omega,u_\omega)\in G_\nu$ and $[1]$ is closed and open, one has
\begin{align*}
\nu([1])&=\lim_{m\rightarrow +\infty} \frac{1}{m}\sum_{j=0}^{m-1}\delta_{T^j(\omega,u_\omega)}([1])\\
&=\lim_{m\rightarrow +\infty} \frac{1}{m}|\{j\in \{0,1,\cdots,m-1\}:(\sigma^ju_{\omega})(0)=1\}|\\
&=\lim_{m\rightarrow +\infty}\frac{1}{m}|\{j\in \{0,1,\cdots,m_i-1\}:u_\omega(j)=1\}|\\
&=\lim_{m\rightarrow +\infty}\frac{1}{m}|J(\omega)\cap \{0,1,\cdots,m-1\}|
\end{align*}
Thus the limit
$\lim_{m\rightarrow +\infty}\frac{1}{m}|J(\omega)\cap \{0,1,2,\cdots m\}|$ exists and is larger than or equal to $b$, as $\nu([1])\ge b$.
Note that $(\omega,u_\omega)\in Y$. By the definition of $Y$, one has for any $s\in \{1,2\}^{J(\omega)}$, there exists $x_s\in K(\omega)$ with
$\phi(j,\omega,x_s)\in U_{s(j)}\cap K(\theta^j \omega)$ for any
$j\in J(\omega)$. Summing up the above discussion, $\{U_1,U_2\}$ is a full Horseshoe
 for $(\phi,K)$ and completes the proof of the Theorem.
\hfill$\square$.

\section{Proof of Theorem \ref{MTH}} \label{Li-Yorke-chaos}

To prove this theorem, we also need the following result due to Mycielski (see \cite[Theorem 5.10]{Ak}
and \cite[Theorem 6.32]{AAG}).

\begin{lem}(Mycielski) \label{Myc} Let $Y$ be a perfect compact metric space and $C$ a symmetric dense
$G_\delta$ subset of $Y\times Y$. Then there exists a dense Mycielski subset $K\subseteq Y$ such that
$K\times K\subseteq C\cup \Delta_Y$, where $\Delta_Y=\{ (y,y):y\in Y\}$. \end{lem}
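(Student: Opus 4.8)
The plan is to prove the statement by a Cantor--scheme (fusion) argument, the classical route to Mycielski's theorem; the entropy hypotheses and the dynamics of the ambient paper play no role, so I would treat $Y$ simply as a perfect compact metric space and $C$ as an abstract symmetric dense $G_\delta$. First I would write $C=\bigcap_{n\ge 1}U_n$ with each $U_n\subseteq Y\times Y$ open, dense and symmetric, and (replacing $U_n$ by $\bigcap_{k\le n}U_k$) decreasing. The target set will be a countable union $K=\bigcup_{m\ge 1}K_m$ of Cantor sets; a single Cantor set cannot be dense in $Y$ (being compact it is closed, so a dense one would equal $Y$, which need not be a Cantor set), so the union is genuinely needed to obtain density. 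To force density I would fix a countable base $\{W_m\}_{m\ge 1}$ of nonempty open subsets of $Y$ and arrange $K_m\subseteq W_m$, so that $K$ meets every $W_m$.

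The core is an inductive construction of a forest of dyadic schemes, indexed by $s\in\{m\}\times\{0,1\}^{<\omega}$ for $m\ge 1$, activating the $m$-th tree at stage $m$. At each stage $n$ I maintain a finite family of nonempty open sets $V_s$ attached to the current frontier nodes $s$, subject to: (i) the closures $\overline{V_s}$ over the frontier are pairwise disjoint; (ii) $\overline{V_{s^\frown i}}\subseteq V_s$ for $i\in\{0,1\}$ and $\mathrm{diam}(V_s)<1/n$; (iii) the root of the $m$-th tree satisfies $\overline{V_{(m,\emptyset)}}\subseteq W_m$; and, crucially, (iv) for any two distinct current frontier nodes $s\ne t$, $\overline{V_s}\times\overline{V_t}\subseteq U_n$. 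The passage from stage $n$ to $n+1$ works because only finitely many nodes are in play: I split each $V_s$ into two disjoint small open pieces (possible since $Y$ is perfect, so $V_s$ is infinite), newly activate the $(n+1)$-st root inside $W_{n+1}$, and then enforce (iv) for the finitely many new distinct pairs one at a time. For a single pair $(s',t')$ I use that $U_{n+1}$ is open and dense, so $(V_{s'}\times V_{t'})\cap U_{n+1}$ is a nonempty open set and hence contains a closed box $\overline{A}\times\overline{B}\subseteq U_{n+1}$ with $A\subseteq V_{s'}$ and $B\subseteq V_{t'}$; I then shrink $V_{s'}$ to $A$ and $V_{t'}$ to $B$. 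Since shrinking only makes the relevant products smaller, it never destroys conditions already secured for previously processed pairs, so after finitely many shrinkings (iv) holds for every current pair.

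Having completed the construction, I would set $K_m=\bigcap_{n}\bigcup_{|s|=n}\overline{V_{(m,s)}}$, which by (i)--(iii) is homeomorphic to $\{0,1\}^{\omega}$, i.e.\ a Cantor set, and lies in $W_m$; then $K=\bigcup_m K_m$ is dense because $K\cap W_m\supseteq K_m\ne\emptyset$, and it is a Mycielski set as a countable union of Cantor sets. To verify $K\times K\subseteq C\cup\Delta_Y$, take distinct $p,q\in K$ along branches of (possibly equal) trees. Whether they lie on distinct branches of the same tree or in different trees, there is a stage $n_0$ beyond which the frontier nodes $s,t$ carrying $p$ and $q$ are always distinct; then $(p,q)\in\overline{V_s}\times\overline{V_t}\subseteq U_n$ for every $n\ge n_0$ by (iv), and since the $U_n$ are decreasing this forces $(p,q)\in\bigcap_n U_n=C$. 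Thus any two distinct points of $K$ are $C$-related.

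I expect the main obstacle to be the simultaneous management of the two competing requirements: density of the whole set, and the pairwise $C$-relation across \emph{all} the Cantor sets at once. This is exactly what the multi-tree bookkeeping resolves, since anchoring the $m$-th tree in $W_m$ yields density, while maintaining (iv) for every pair of distinct current frontier nodes --- in the same tree or in different trees --- guarantees that the cross-terms $K_m\times K_{m'}$ also land in $C$. The remaining points are routine: checking that finitely many monotone shrinkings at each stage can be carried out without conflict (which follows from monotonicity of the box condition under shrinking) and that the diameter control yields genuine Cantor sets as branch limits.
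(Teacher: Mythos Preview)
The paper does not prove this lemma at all; it simply quotes it as a known result due to Mycielski, with references to \cite[Theorem 5.10]{Ak} and \cite[Theorem 6.32]{AAG}. So there is no ``paper's own proof'' to compare against.

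Your Cantor--scheme (fusion) argument is the standard proof of Mycielski's theorem and is correct. The multi-tree bookkeeping, anchoring the $m$-th root in a basic open set $W_m$ to force density while simultaneously maintaining the pairwise box condition~(iv) across all active trees, is exactly the right mechanism, and your shrinking step (using density and openness of $U_{n+1}$ to pass from $V_{s'}\times V_{t'}$ to a closed sub-box inside $U_{n+1}$, processed one pair at a time) is sound because shrinking is monotone in the required inclusions. One small point you glide over: when you activate the new root $V_{(n+1,\emptyset)}\subseteq W_{n+1}$ you also need its closure disjoint from the closures of all current frontier cells (your condition~(i)); this is routine---since $Y$ is perfect and the frontier at that stage is a finite collection of nonempty open sets, first choose distinct points (two in each $V_s$ for the children, and one in $W_{n+1}$ distinct from these finitely many points), then take small disjoint balls around them before beginning the pairwise shrinking---but it is worth saying explicitly. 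With that detail in place your argument goes through and yields the dense Mycielski set $K$ with $K\times K\subseteq C\cup\Delta_Y$.
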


\smallskip

\noindent{\bf Proof of Theorem \ref{MTH}.} Assume that
$ h_{top}(\phi,K)>0$. By Proposition \ref{VP} there exists $\mu\in \mathcal{E}_P^K(\Omega\times X)$  such
that $h_\mu(\phi)>0$. Since $K$ is a Borel subset of $\Omega\times X$, $K$ is also Polish space and the Borel
$\sigma$-algebra $\mathcal{B}_K$ of $K$ is just $\{ A\cap K: A\in  \mathcal{F}\times \mathcal{B}_X\}$. Thus
$(K,\mathcal{B}_K,\mu, \Phi)$ is an  ergodic Polish system and $\pi_\Omega: (K,\mathcal{B}_K,\mu,
\Phi)\rightarrow (\Omega, \mathcal{F},P,\theta)$ is a factor map between two Polish systems. Recall that the
entropy of RDS $\phi$ with respect to $\mu$  is given by
$$h_\mu(\phi):=h_\mu(\Phi|\pi_\Omega)=h_\mu(\Phi|\pi_\Omega^{-1}(\mathcal{F})).$$

Let $\mathcal{B}_P$ be the completion  of  $\mathcal{F}$ with  respect to  $P$. Let $\mathcal{B}_\mu$ be the
completion  of  $\mathcal{B}_K$ with  respect to  $\mu$. Then $\pi_\Omega:(K,\mathcal{B}_\mu,\mu,
\Phi)\rightarrow (\Omega,  \mathcal{B}_P,P,\theta)$  is a factor map between  two Lebesgue  systems. Since
$\mathcal{B}_P=\mathcal{F}$ (mod $P$) and  $\mathcal{B}_\mu=\mathcal{B}_K$ (mod  $\mu$), it  is  clear  that
$$
h_\mu(\Phi|\pi_\Omega^{-1}(\mathcal{B}_P))=h_\mu(\Phi|\pi_\Omega^{-1}(\mathcal{F}))=h_\mu(\phi).
$$ Let
$\Pi_K:(\bar{K},\bar{\mathcal{B}_{\mu}},\bar{\mu}, \bar{\Phi})\rightarrow (K,\mathcal{B}_\mu,\mu, \Phi)$ be
the  natural extension of $(K,\mathcal{B}_\mu,\mu, \Phi)$. Let $$\Pi_{\Omega}: (\bar{\Omega},
\bar{\mathcal{B}_{P}},\bar{P},\bar{\theta})\rightarrow (\Omega, \mathcal{B}_P,P,\theta)$$ be the natural
extension of $(\Omega, \mathcal{B}_P,P,\theta)$. Define $\bar{\pi}:\bar{K}\rightarrow \bar{\Omega}$ by
$\bar{\pi}(((\omega_i,x_i))_{i\in \mathbb{Z}})=(\omega_i)_{i\in \mathbb{Z}}$ for $((\omega_i,x_i))_{i\in
\mathbb{Z}}\in \bar{K}$.  Then $\bar{\pi}:(\bar{K},\bar{\mathcal{B}_{\mu}},\bar{\mu},\bar{\Phi})\rightarrow
(\bar{\Omega},\bar{\mathcal{B}_{P}},\bar{P},\bar{\theta})$ be a factor map between two invertible Lebesgue
systems and the following diagram is  commutative: \begin{displaymath} \begin{CD}
(K,\mathcal{B}_\mu,\mu,\Phi)  @< \Pi_K<<  (\bar{K},\bar{\mathcal{B}_{\mu}},\bar{\mu},\bar{\Phi})\\ @VV\pi V
@VV\bar{\pi} V  \\ (\Omega,\mathcal{B}_P,P,\theta) @<  \Pi_\Omega<<
(\bar{\Omega},\bar{\mathcal{B}_{P}},\bar{P},\bar{\theta})\\ \end{CD} \end{displaymath}

We divide the remainder of the proof  into  four  steps.

\medskip \noindent{\it Step  1.}  Let $P_{\bar{\mu}}(\bar{\pi})$ be the relative Pinsker $\sigma$-algebra of
$(\bar{K},\bar{\mathcal{B}_{\mu}},\bar{\mu},\bar{\Phi})$ relative  to $\bar{\pi}$. Since
${\bar{\Phi}}^{-1}(P_{\bar{\mu}}(\bar{\pi}))=P_{\bar{\mu}}(\bar{\pi})$  and
$$\bar{\pi}^{-1}(\mathcal{\bar B}_{{P}})\subseteq P_{\bar{\mu}}(\bar{\pi})\subseteq \bar{\mathcal{B}_{\mu}},$$
there exist an invertible  Lebesgue system  $(Y,\mathcal{D},\nu,S)$ and  two factor maps
$$\pi_1:(\bar{K},\bar{\mathcal{B}_{\mu}},\bar{\mu},\bar{\Phi})\rightarrow (Y,\mathcal{D},\nu,S),\
\pi_2:(Y,\mathcal{D},\nu,S)\rightarrow  (\bar{\Omega},\bar{\mathcal{B}_{P}},\bar{P},\bar{\theta})$$ between
invertible  Lebesgue systems such that  $\pi_2\circ \pi_1=\bar{\pi}$  and
$\pi_1^{-1}(\mathcal{D})=P_{\bar{\mu}}(\bar{\pi})$ (mod $\bar{\mu}$). That is, $\pi_1$ is the Pinsker factor
of $(\bar{K},\bar{\mathcal{B}_{\mu}},\bar{\mu},\bar{\Phi})$ relative  to $\bar{\pi}$.

Let  $\bar{\mu}=\int_Y \bar{\mu}_y d \nu(y)$ be the disintegration of $\bar{\mu}$ relative to the  factor
$(Y,\mathcal{D},\nu,S)$. Let $$(\bar{K}\times \bar{K},\bar{\mathcal{B}_{\mu}}\times
\bar{\mathcal{B}_{\mu}},\bar{\mu} \times_Y\bar{\mu}, \bar{\Phi}\times \bar{\Phi})$$  be the product of
$(\bar{K},\bar{\mathcal{B}_{\mu}},\bar{\mu},\bar{\Phi})$ with itself relative to a  factor
$(Y,\mathcal{D},\nu,S)$. Recall that $$(\bar{\mu}\times_Y \bar{\mu})(B)=\int_{Y}(\bar{\mu}_y\times
\bar{\mu}_y)(B) \, d \nu(y), \ B\in \bar{\mathcal{B}_{\mu}}\times \bar{\mathcal{B}_{\mu}}.$$ Since
$(K,\mathcal{B}_{\mu},\mu,\Phi)$ is ergodic, we have that
$(\bar{K},\bar{\mathcal{B}_{\mu}},\bar{\mu},\bar{\Phi})$ is ergodic. By \eqref{na=en-1} in the proof of Theorem \ref{MTH1},
$h_{\bar{\mu}}(\bar{\Phi}|\bar{\pi})>0$.  Moreover by Lemma \ref{key-lem}, we have
\begin{enumerate} \item[{(a1).}]
$\bar{\mu}_y$ is non-atomic for $\nu$-a.e. $y\in Y$.

\item[{(a2).}] $(\bar{K}\times \bar{K},\bar{\mathcal{B}_{\mu}}\times \bar{\mathcal{B}_{\mu}},\bar{\mu}
    \times_Y\bar{\mu}, \bar{\Phi}\times \bar{\Phi})$ is ergodic.
\end{enumerate}

\medskip \noindent{\it Step 2.} We define a metric $\rho$ on $X^{\mathbb{Z}}$ as follows:
$$\rho((x_i^1)_{i\in  \mathbb{Z}},  (x_i^2)_{i\in  \mathbb{Z}})=\sum_{i\in  \mathbb{Z}} \frac{1}{2^{|i|}}
\frac{d(x_i^1,x_i^2)}{1+d(x_i^1,x_i^2)} \ \text{ for } (x^1_i)_{i\in  \mathbb{Z}},(x_i^2)_{i\in
\mathbb{Z}}\in X^{\mathbb{Z}}.$$

For $n\in  \mathbb{N}$, let $$A_n=\{ ((\omega^1_i,x^1_i))_{i\in  \mathbb{Z}}, ((\omega^2_i,x^2_i))_{i\in
\mathbb{Z}})\in  \bar{K}\times \bar{K} :\rho((x^1_i)_{i\in  \mathbb{Z}},(x_i^2)_{i\in
\mathbb{Z}})<\frac{1}{n}\}$$ and $$B_n=\bar{K}\times \bar{K}\setminus  A_n= \{ ((\omega^1_i,x^1_i))_{i\in
\mathbb{Z}}, ((\omega^2_i,x^2_i))_{i\in \mathbb{Z}})\in  \bar{K}\times \bar{K} :\rho((x^1_i)_{i\in
\mathbb{Z}},(x_i^2)_{i\in  \mathbb{Z}})\ge \frac{1}{n}\} .$$ Then $A_n,B_n$ are measurable subsets of
$\bar{K}\times \bar{K}$. Now we  claim:

\noindent{\bf Claim 1:} $\bar{\mu}\times_Y \bar{\mu}(\bigcup \limits_{n=1}^\infty B_n)=1$ and
$\bar{\mu}\times_Y \bar{\mu}(A_n)>0$ for each  $n\in  \mathbb{N}$.

\begin{proof}[Proof of  Claim 1]  Let $\Delta=\{(((\omega_i^1,x^1_i))_{i\in  \mathbb{Z}},
((\omega_i^2,x^2_i))_{i\in  \mathbb{Z}})\in \bar{K}\times \bar{K}:x^1_i=x_i^2 \text{ for each }i\in
\mathbb{Z}\}$. Then $$\bar{K}\times \bar{K}\setminus \Delta=\bigcup_{n=1}^\infty B_n.$$ Hence to show
$\bar{\mu}\times_Y \bar{\mu}(\bigcup \limits_{n=1}^\infty B_n)=1$, it is sufficient to show that
$\bar{\mu}\times  \bar{\mu}(\Delta)=0$. For $y\in Y$, $\pi_2(y)\in  \Omega$ and
$\bar\mu_y(\bar{\pi}^{-1}(\pi_2(y)))=1$.
Hence $$\bar{\mu}_y\times \bar{\mu}_y(\Delta)=\bar{\mu}_y\times
\bar{\mu}_y(\bar{\pi}^{-1}(\pi_2(y))\times \bar{\pi}^{-1}(\pi_2(y))\cap \Delta)=\bar{\mu}_y\times
\bar{\mu}_y(\Delta_{\bar{K}})$$ where $\Delta_{\bar{K}}=\{ (\bar{k},\bar{k}):\bar{k}\in \bar{K}\}$.

It is clear that when  $\bar{\mu}_y$ is non-atomic, i.e., $\bar{\mu}_y(\{  \bar{k}\})=0$ for $\bar{k}\in
\bar{K}$, we have  $$\bar{\mu}_y\times  \bar{\mu}_y(\Delta_{\bar{K}})=\int_{\bar{K}}\left(
\int_{\bar{K}}1_{\Delta_{\bar{K}}}(\bar{k}_1,\bar{k}_2) d\bar{\mu}_y(\bar{k}_2) \right)
d\bar{\mu}_y(\bar{k}_1)=\int_{\bar{K}}\bar{\mu}_y(\{\bar{k}_1\}) d \bar{\mu}_y(\bar{k}_1)=0
$$ where
$1_{\Delta_{\bar{K}}}$ is the characterization function  of  $\Delta_{\bar{K}}$. By (a1) in step 1, for
$\nu$-a.e. $y\in Y$, $\bar{\mu}_y$ is non-atomic. Thus for  $\nu$-a.e. $y\in Y$, $\bar{\mu}_y\times
\bar{\mu}_y(\Delta) = \bar{\mu}_y\times \bar{\mu}_y(\Delta_{\bar{K}})=0$.
Moreover,
\begin{align*}
\bar{\mu}\times_Y \bar{\mu}(\Delta)=\int_Y \bar{\mu}_y\times
\bar{\mu}_y(\Delta) d\nu(y)=0.
\end{align*} This
implies $\bar{\mu}\times_Y \bar{\mu}(\bigcup \limits_{n=1}^\infty B_n)=1$.

Next,  for  $n\in \mathbb{N}$, we are to show $\bar{\mu}\times_Y  \bar{\mu}(A_n)>0$. For $\bar{z}\in
X^{\mathbb{Z}}$, we define $$B_\rho(\bar{z},n)=\{ \bar{x}\in X^{\mathbb{Z}}:
\rho(\bar{x},\bar{z})<\frac{1}{2n}\}.$$ Since $(X,d)$ is a separable metric space,  $(X^{\mathbb{Z}},\rho)$
is also separable. Thus there exist $\{ \bar{z}_i\}_{i=1}^\infty \subseteq X^{\mathbb{Z}}$  such that
$\bigcup_{i=1}^\infty B_\rho(\bar{z}_i,n)=X^{\mathbb{Z}}$.  Put $$D_i=\{ ((\omega_i,x_i))_{i\in
\mathbb{Z}}\in \bar{K}: (x_i)_{i\in \mathbb{Z}}\in B_\rho(\bar{z}_i,n)\}.$$ Then $\bigcup_{i=1}^\infty
D_i=\bar{K}$ and there exists $r\in \mathbb{N}$ such that $\bar{\mu}(D_r)>0$. It is clear  that  $D_r\times
D_r\subseteq A_n$ and \begin{align*} \bar{\mu}\times_Y \bar{\mu}(A_n)&\ge \bar{\mu}\times_Y
\bar{\mu}(D_r\times  D_r)=\int_Y \bar{\mu}_y\times  \bar{\mu}_y(D_r\times D_r)d \nu(y)\\ &=\int_Y
(\bar{\mu}_y(D_r))^2 d  \nu(y)\ge (\int_Y \bar{\mu}_y(D_r) d \nu(y))^2=\bar{\mu}(D_r)^2>0. \end{align*} This
completes the proof of Claim 1. \end{proof}

\medskip \noindent{\it Step 3.} Let \begin{align*} \bar{\Omega}_0=\{& (\omega_i)_{i\in \mathbb{Z}}\in
\bar{\Omega}: \phi(\omega_i):X\rightarrow X
 \text{ is  continuous and }K_{\omega_i} \text{ is compact subset of }\\
 &\hskip1cm X \text{ for each } i\in \mathbb{Z}\}.
\end{align*}
 Since $\phi(\omega):X\rightarrow X$ is  continuous and $K_\omega$ is compact subset of $X$
 for $P$-a.e. $\omega\in \Omega$,
 the set $$\Omega_0:=\{  \omega\in \Omega: \phi(\omega):X\rightarrow X \text{ is  continuous and }K_{\omega}
 \text{ is compact subset of }X\}$$
 is full measure, that is $P(\Omega_0)=1$. Note that
 $\bar{\Omega}_0=\bigcap_{n\in \mathbb{Z}}\Pi_{n,\Omega}^{-1}(\Omega_0)$,
 we have $\bar{P}(\bar{\Omega}_0)=1$ since for each $n\in \mathbb{Z}$,
 $\bar{P}(\Pi_{n,\Omega}^{-1}\Omega_0)=
 P(\Omega_0)=1$.  Moreover
 \begin{equation}\label{mes=1}
 \nu(\pi_2^{-1}(\bar{\Omega}_0))=\bar{P}(\bar{\Omega}_0)=1.
 \end{equation}

By the step 2, there exists $r\in \mathbb{N}$ such that $\bar{\mu}\times_Y\bar{\mu}(B_r)>0$. Let
\begin{align*}
 W=\{ &(\bar{k}_1,\bar{k}_2)\in \bar{K}\times \bar{K}: \lim_{\ell\rightarrow \infty}
 \frac{1}{\ell}\sum_{i=0}^{\ell-1} 1_{A_n}((\bar{\Phi}\times
 \bar{\Phi})^i(\bar{k}_1,\bar{k}_2))=\bar{\mu}\times_Y \bar{\mu}(A_n)>0  \\
&\text{ for each }n\in \mathbb{N}\text{ and } \lim_{\ell\rightarrow \infty} \frac{1}{\ell}\sum_{i=0}^{\ell-1}
1_{B_r}((\bar{\Phi}\times \bar{\Phi})^i(\bar{k}_1,\bar{k}_2))=\bar{\mu}\times_Y \bar{\mu}(B_r)>0 \}
\end{align*} where $1_E$ is the characterization function of $E\subseteq \bar{K}\times \bar{K}$. Then,  $W$ is
a Borel subset of $\bar{K}\times \bar{K}$ and by the Birkhoff ergodic theorem $\bar{\mu}\times_Y
\bar{\mu}(W)=1$ since $\bar{\mu}\times_Y \bar{\mu}$ is ergodic (see (a2) in Step 1). Let $$Y_0=\{ y\in  Y:
\bar{\mu}_y\times \bar{\mu}_y(W)=1, \mu_y \text{ is non-atomic}\}\cap \pi_2^{-1}(\bar{\Omega}_0).$$ Note that
$1=\bar{\mu}\times_Y \bar{\mu}(W)=\int_Y \bar{\mu}_y\times \bar{\mu}_y(W) d \nu(y)$ since
$\bar{\mu}_y\times \bar{\mu}_y(W)=1$ for $\nu$-a.e. $y\in Y$.  Combing this with \eqref{mes=1} and  (a1) in
Step 1, we have  $\nu(Y_0)=1$.

For  $y\in Y$, let $(\omega_i(y))_{i\in\mathbb{Z}}:=\pi_2(y)\in \bar{\Omega}$ and
$$\bar{K}_y=\{(x_i)_{i\in
\mathbb{Z}}\in \prod_{i\in  \mathbb{Z}}K_{\omega_i(y)}: \phi(\omega_i(y))(x_i)=x_{i+1} \text{ for each
}i\in\mathbb{Z}\}.$$ For simplicity,  we identify $(\Omega\times  X)^{\mathbb{Z}}$ with
$\Omega^{\mathbb{Z}}\times X^{\mathbb{Z}}$  as follow:
$$((\omega_i,x_i))_{i\in \mathbb{Z}}\in (\Omega\times
X)^{\mathbb{Z}}\simeq ((\omega_i)_{i\in \mathbb{Z}}, (x_i)_{i\in \mathbb{Z}})\in \Omega^{\mathbb{Z}}\times
X^{\mathbb{Z}}.
$$ Clearly  $$\pi_1^{-1}(y)\subseteq \bar{\pi}^{-1}(\pi_2(y))\subseteq \{\pi_2(y)\}\times
\bar{K}_y.$$ Put $\kappa=\frac{1}{8r}$, we have the following claim.

\medskip \noindent{\it Claim 2.} For $y\in Y_0$, $\bar{K}_{y}$ is  a compact subset of
$(X^{\mathbb{Z}},\rho)$ and there exists a Mycielski subset $D_y$ of  $\bar{K}_y$ such that for each pair
$((x^1_i)_{i\in  \mathbb{Z}},(x_i^2)_{i\in \mathbb{Z}})$ of distinct points in  $D_y$, we have
\begin{align}\label{sat-LY}
\begin{aligned} &\limsup \limits_{n\rightarrow +\infty}
\rho((\phi(n, {\omega_i(y)}, x_i^1))_{i\in \mathbb{Z}}, (\phi(n, {\omega_i(y)}, x_i^2))_{i\in  \mathbb{Z}})\ge 4\kappa
\text{ and }\\ &\liminf \limits_{n\rightarrow +\infty}  \rho((\phi(n, {\omega_i(y)}, x_i^1))_{i\in \mathbb{Z}},
(\phi(n, {\omega_i(y)}, x_i^2))_{i\in  \mathbb{Z}})=0
\end{aligned} \end{align} where
  $\omega\in \Omega$.

\begin{proof}[Proof  of Claim 2] Let $y\in Y_0$. First by the choice of $Y_0$,
$\phi(\omega_i(y)):(X,d)\rightarrow (X,d)$ is continuous and
$K_{\omega_i(y)}$ is  compact subset of $(X,d)$ for each
$i\in\mathbb{Z}$, where $(\omega_i(y))_{i\in \mathbb{Z}}:=\pi_2(y)$.
Thus $\prod_{i\in \mathbb{Z}}K_{\omega_i(y)}$ is  a compact subset
of  $(X^{\mathbb{Z}},\rho)$. Since  $\bar{K}_y$ is a closed subset
of $\prod_{i\in  \mathbb{Z}}K_{\omega_i(y)}$, $\bar{K}_y$ is  also a
compact subset of $(X^{\mathbb{Z}},\rho)$.   We define
\begin{align*} E_y&=\{ (x_i)_{i\in \mathbb{Z}}\in \bar K_y: \bar{\mu}_y(\{
\pi_2(y)\} \times  (U\cap {\bar K_y}))>0 \text{ for any open
neighborhood $U$ of }
\\ &\hskip1cm (x_i)_{i\in \mathbb{Z}}  \text{  in }
(X^{\mathbb{Z}},\rho)\}
\end{align*}
Since $\bar{\mu}_y(\pi_1^{-1}(y))=1$ and $\pi_1^{-1}(y)\subseteq
\{\pi_2(y)\}\times \bar{K}_y$,  we have
$\bar{\mu}_y(\{\pi_2(y)\}\times \bar{K}_y)=1$. Moreover,  $E_y$ is a
closed subset of $\bar{K}_y$ since $\bar{K}_y$ is  compact. Thus
$E_y$ is compact and $\bar{\mu}_y(\{\pi_2(y)\}\times E_y)=1$. Note
that $\bar {\mu}_y$ is  non-atomic, $E_y$ is a perfect set. Hence,
$(E_y,\rho)$ is a perfect compact metric space.

For  $n,m\in\mathbb{N}$, put
\begin{align*} D_n(y)=\{&((x_i^1)_{i\in \mathbb{Z}},(x_i^2)_{i\in
\mathbb{Z}})\in E_y\times E_y: \text{ there exists $k\ge n$ }\\ &\hskip0.5cm \text{ such that
}\rho\Big((\phi(k, {\omega_i(y)}, x_i^1))_{i\in \mathbb{Z}}, (\phi(k, {\omega_i(y)}, x_i^2))_{i\in
\mathbb{Z}}\Big)>4\kappa\}
\end{align*}  and
\begin{align*} P_{n,m}(y)=\{&((x_i^1)_{i\in
\mathbb{Z}},(x_i^2)_{i\in \mathbb{Z}})\in E_y\times E_y: \text{
there exists $p\ge n$ }\\ &\hskip0.5cm \text{ such that
}\rho\left((\phi(p, {\omega_i(y)}, x_i^1))_{i\in \mathbb{Z}},
(\phi(p, {\omega_i(y)}, (x_i^2))_{i\in \mathbb{Z}}\right)
<\frac{1}{m} \}. \end{align*}  Since  $(x_i)_{i\in
\mathbb{Z}}\mapsto (\phi(n, {\omega_i(y)}, x_i))_{i\in \mathbb{Z}}$
is  a continuous map from $(X^{\mathbb{Z}},\rho)$ to itself,
$D_n(y)$ and $P_{n,m}(y)$  are both open  subsets of $(E_y\times
E_y,\rho\times  \rho)$. Let $$C(y)=\left( \bigcap_{n=1}^\infty
D_n(y)\right)\cap \left(\bigcap_{m=1}^\infty (\bigcap_{n=1}^\infty
P_{n,m}(y))\right).$$ Then $C(y)$ is a $G_\delta$ subset of
$E_{y}\times  E_y$ and for any $((x^1_i)_{i\in
\mathbb{Z}},(x_i^2)_{i\in \mathbb{Z}})\in  C(y)$, $((x^1_i)_{i\in
\mathbb{Z}},(x_i^2)_{i\in \mathbb{Z}})$ satisfies \eqref{sat-LY}. We
claim $C(y)$ is  dense  in $E_{y}\times  E_y$. If this is not true,
then there exist open  subsets $U_1$, $U_2$ of  $X^{\mathbb{Z}}$
such that $(U_1\times U_2)\cap (E_y\times E_y)\neq \emptyset$ and
$(U_1\times U_2)\cap C(y)=\emptyset$. Since
$\bar{\mu}_y(\{\pi_2(y)\}\times E_y)=1$, one has \begin{align*}
&\bar{\mu}_y\times
\bar{\mu}_y((\pi_2(y)\times (U_1\cap E_y))\times (\pi_2(y)\times (U_2\cap E_y)))\\
=&\bar{\mu_y}(\pi_2(y)\times (U_1\cap E_y))\bar{\mu_y}(\pi_2(y)\times (U_2\cap E_y))\\
=&\bar{\mu_y}(\pi_2(y)\times U_1)\bar{\mu_y}(\pi_2(y)\times U_2)>0.
\end{align*}
Combing this with the  fact $\bar{\mu}_y\times\bar{\mu}_y(W)=1$, we
have \begin{align*} &\bar{\mu}_y\times
\bar{\mu}_y(\left((\pi_2(y)\times (U_1\cap E_y))\times (\pi_2(y)\times (U_2\cap  E_y))\right)\cap W)\\
=&\bar{\mu}_y\times \bar{\mu}_y((\pi_2(y)\times U_1)\times (\pi_2(y)\times U_2))>0. \end{align*}
Particularly, $((\pi_2(y)\times (U_1\cap E_y))\times (\pi_2(y)\times (U_2\cap  E_y))\cap W\neq \emptyset$.
Hence there exist $(z^1_i)_{i\in \mathbb{Z}}\in U_1\cap  E_y$ and $(z^2_i)_{i\in \mathbb{Z}}\in U_2\cap  E_y$
such that $(((\omega_i(y),z_i^1))_{i\in\mathbb{Z}}, ((\omega_i(y),z_i^2))_{i\in \mathbb{Z}})\in  W$.

Let  $n,m\in \mathbb{N}$. Since $((\omega_i(y),z_i^1))_{i\in\mathbb{Z}}, ((\omega_i(y),z_i^2))_{i\in
\mathbb{Z}})\in  W$, we have  $$\lim_{\ell\rightarrow \infty} \frac{1}{\ell}\sum_{i=0}^{\ell-1}
1_{B_r}((\bar{\Phi}\times \bar{\Psi})^i(((\omega_i(y),z_i^1))_{i\in\mathbb{Z}}, ((\omega_i(y),z_i^2))_{i\in
\mathbb{Z}})=\bar{\mu}\times_Y \bar{\mu}(B_r)>0$$
Hence there exists $k\ge n$ such that $(\bar{\Phi}\times
\bar{\Phi})^k(((\omega_i(y),z_i^1))_{i\in\mathbb{Z}}, ((\omega_i(y),z_i^2))_{i\in \mathbb{Z}})\in B_r$. Note
that
$$ \bar{\Phi}^k(((\omega_i(y),z_i^j))_{i\in\mathbb{Z}}= ((\theta^k\omega_i(y),
\phi(k, {\omega_i(y)}, z_i^j)))_{i\in \mathbb{Z}}$$ for $j=1,2$. Thus,  we have  $$\left( ((\theta^k\omega_i(y),
\phi(k, {\omega_i(y)}, z_i^j)))_{i\in \mathbb{Z}},((\theta^k\omega_i(y), \phi(k, {\omega_i(y)}, z_i^j)))_{i\in
\mathbb{Z}}\right)\in B_r.$$ Thus $\rho((\phi(k, {\omega_i(y)}, z_i^j))_{i\in \mathbb{Z}},
(\phi(k, {\omega_i(y)}, z_i^j))_{i\in \mathbb{Z}})\ge \frac{1}{r}> 4\kappa$. Hence \begin{equation}\label{dny}
((z^1_i)_{i\in \mathbb{Z}},(z^2_i)_{i\in \mathbb{Z}})\in D_n(y). \end{equation}

Since $((\omega_i(y),z_i^1))_{i\in\mathbb{Z}}, ((\omega_i(y),z_i^2))_{i\in \mathbb{Z}})\in  W$, we have
$$\lim_{\ell\rightarrow \infty} \frac{1}{\ell}\sum_{i=0}^{\ell-1} 1_{A_m}((\bar{\Phi}\times
\bar{\Phi})^i(\bar{k}_1,\bar{k}_2))=\bar{\mu}\times_Y \bar{\mu}(A_m)>0.$$
Thus there exits $p\ge n$ such that
$(\bar{\Phi}\times \bar{\Phi})^{p}(((\omega_i(y),z_i^1))_{i\in\mathbb{Z}}, ((\omega_i(y),z_i^2))_{i\in
\mathbb{Z}})\in A_m$. Note that
$$ \bar{\Phi}^p(((\omega_i(y),z_i^j))_{i\in\mathbb{Z}}= ((\theta^p\omega_i(y),
\phi(p, {\omega_i(y)}, z_i^j)))_{i\in \mathbb{Z}}$$ for $j=1,2$.  We have $$\left( ((\theta^p\omega_i(y),
\phi(p, {\omega_i(y)}, z_i^j)))_{i\in \mathbb{Z}},((\theta^p\omega_i(y), \phi(p, {\omega_i(y)}, z_i^j)))_{i\in
\mathbb{Z}}\right)\in A_m.$$ Thus $\rho((\phi(p, {\omega_i(y)}, z_i^j))_{i\in \mathbb{Z}},
(\phi(p,{\omega_i(y)}, z_i^j))_{i\in \mathbb{Z}})<\frac{1}{m}$. Hence \begin{equation}\label{pnmy} ((z^1_i)_{i\in
\mathbb{Z}},(z^2_i)_{i\in \mathbb{Z}})\in P_{n,m}(y). \end{equation} Since \eqref{dny} and  \eqref{pnmy} are
true for any $n,m\in \mathbb{N}$, we have  $((z^1_i)_{i\in \mathbb{Z}},(z^2_i)_{i\in \mathbb{Z}})\in C(y)$.
Therefore $((z^1_i)_{i\in \mathbb{Z}},(z^2_i)_{i\in \mathbb{Z}})\in (U_1\times U_2)\cap C(y)$, a
contradiction with $(U_1\times U_2)\cap C(y)=\emptyset$. This shows that $C(y)$ is a dense  $G_\delta$ subset
of $E_{y}\times  E_y$.

\medskip Using Lemma  \ref{Myc} for  $(C(y), E_y)$, there  exists a dense Mycielski subset $D_y\subseteq E_y$
such that $D_y\times D_y\subseteq C\cup \Delta_{E_y}$, where $\Delta_{E_y}=\{ (\bar{x},\bar{x}):\bar{x}\in
E_y\}$. Clearly, $D_y\subseteq E_y\subseteq \bar{K}_y$  and if $((x^1_i)_{i\in  \mathbb{Z}},(x_i^2)_{i\in
\mathbb{Z}})$ is a pair of distinct points in  $D_y$, then $((x^1_i)_{i\in  \mathbb{Z}},(x_i^2)_{i\in
\mathbb{Z}})\in  C(y)$ and hence $((x^1_i)_{i\in  \mathbb{Z}},(x_i^2)_{i\in \mathbb{Z}})$ satisfies
\eqref{sat-LY}. This completes the proof of Claim  2. \end{proof}

\medskip \noindent{\it Step  4.} For $P$-a.e. $\omega\in \Omega$ there exists a Mycieski chaotic set
$S_\omega\subset K_\omega$ for $(\omega,\phi)$.

\medskip Note that $$\Pi_\Omega\circ \pi_2:(Y,\mathcal{D},\nu,S)\rightarrow
(\Omega,\mathcal{F},\mathcal{B}_P,\theta)$$ is a factor map between two Lebesgue systems. Since $\nu(Y_0)=1$,
there exists $\Omega_1\in  \mathcal{B}_P$ satisfying $P(\Omega_1)=1$ and $(\Pi_\Omega\circ
\pi_2)^{-1}(\omega)\cap Y_0\neq \emptyset$ for  each $\omega\in \Omega_1$.

Let $\omega\in \Omega_1$. Then there exists $y\in Y_0$ such that $\Pi_\Omega\circ \pi_2(y)=\omega$. Since
$\pi_2(y)=(\omega_i(y))_{i\in \mathbb{Z}}$, we have  $\omega=\omega_0(y)$. By Claim  2,  we can find a
Mycielski subset $D_y$ of  $\bar{K}_y$ such that for each pair $((x^1_i)_{i\in  \mathbb{Z}},(x_i^2)_{i\in
\mathbb{Z}})$ of distinct points in  $D_y$, we have
\begin{align}\label{cha-eq} \begin{aligned} &\limsup
\limits_{n\rightarrow +\infty}  \rho((\phi(n,{\omega_i(y)}, x_i^1))_{i\in \mathbb{Z}},
(\phi(n,{\omega_i(y)}, x_i^2))_{i\in  \mathbb{Z}})\ge 4\kappa \text{ and }\\ &\liminf \limits_{n\rightarrow
+\infty}  \rho((\phi(n,{\omega_i(y)}, x_i^1))_{i\in \mathbb{Z}}, (\phi(n,{\omega_i(y)}, x_i^2))_{i\in
\mathbb{Z}})=0.
\end{aligned} \end{align}

Let $\eta:\bar{K}_y\rightarrow K_{\omega}$ be the natural projection of coordinate with $\eta((x_i)_{i\in
\mathbb{Z}})=x_0$ for $(x_i)_{i\in \mathbb{Z}}\in  X^{\mathbb{Z}}$. Put $S_\omega=\eta(D_y)$. Then
$S_\omega\subseteq K_{\omega}$. In the following we show  that $S_\omega$ is a Mycieski chaotic set for
$(\omega,f)$. Firstly we claim the map $\eta: D_y\rightarrow S_\omega$ is injective. If this is not true,
then there  exist two distinct points $(x_i^1)_{i\in  \mathbb{Z}},(x_i^2)_{i\in \mathbb{Z}}$ in $D_y$ such
that $\eta((x_i^1)_{i\in \mathbb{Z}})=\eta((x_i^2)_{i\in \mathbb{Z}})$,  i.e., $x_0^1=x_0^2$. Since
$(x_i^1)_{i\in  \mathbb{Z}},(x_i^2)_{i\in \mathbb{Z}}\in \bar{K}_y$, we  have
\begin{align}\label{eq-n}
x_i^1=\phi(i, {\omega_0(y)}, x_0^1)=\phi(i, {\omega_0(y)}, x_0^2)=x_i^2 \end{align} for $i\in \mathbb{N}$.  Now for
$n\in \mathbb{N}$, we have $$(\phi(n,{\omega_i(y)}, x_i^1))_{i\in \mathbb{Z}}=(x^1_{i+n})_{i\in \mathbb{Z}}
\text{ and } (\phi(n,{\omega_i(y)}, x_i^2))_{i\in \mathbb{Z}}=(x^2_{i+n})_{i\in \mathbb{Z}}. $$ Thus
$$\lim_{n\rightarrow +\infty} \rho((\phi(n,{\omega_i(y)}, x_i^1))_{i\in \mathbb{Z}},
(\phi(n,{\omega_i(y)}, x_i^2))_{i\in \mathbb{Z}})=\lim_{n\rightarrow +\infty}\rho((x^1_{i+n})_{i\in
\mathbb{Z}},(x^2_{i+n})_{i\in \mathbb{Z}} )=0$$ where the last equality comes from \eqref{eq-n}, a
contradiction with $$\lim_{n\rightarrow +\infty} \rho((\phi(n,{\omega_i(y)}, x_i^1))_{i\in \mathbb{Z}},
(\phi(n,{\omega_i(y)}, x_i^2))_{i\in \mathbb{Z}})\ge \kappa$$ since $(x_i^1)_{i\in  \mathbb{Z}},(x_i^2)_{i\in
\mathbb{Z}}$ are distinct points  in $D_y$. This  shows that $\eta: D_y\rightarrow S_\omega$ is injective.

Since $D_y$ is a Mycielski set, $D_y=\bigcup_{j\in \mathbb{N}}C_j$  where each $C_j$ is cantor set. Since
$\eta:(C_j,\rho)\rightarrow (\eta(C_j),d)$ is one to one surjective continuous map and $C_j$ is compact
subset of $(X^{\mathbb{Z}},\rho)$, $\eta:C_j\rightarrow \eta(C_j)$ is homeomorphism. Thus $\eta(C_j)$ is a
cantor set. Hence $S_\omega=\bigcup_{j\in \mathbb{N}} \eta(C_j)$ is also a mycielski set of $K_\omega$.

Finally, we prove that $S_\omega$  is a chaotic set for $(\omega,\phi)$. Let $(x^1,x^2)$ is a pair of distinct
points in $S_\omega$. Then there exist $(x_i^1)_{i\in \mathbb{Z}}, (x_i^2)_{i\in \mathbb{Z}}\in D_y$ such
that  $x_0^1=x^1$ and  $x_0^2=x^2$. On the  one hand, by \eqref{cha-eq} \begin{align*} &\hskip0.5cm \liminf
\limits_{n\rightarrow +\infty}  d(\phi(n, {\omega}, x^1), \phi(n, {\omega}, x^2))=\liminf \limits_{n\rightarrow
+\infty}  d((\phi(n, {\omega_0(y)}, x_0^1), \phi(n, {\omega_0(y)}, x_0^2))\\ &\le \liminf \limits_{n\rightarrow
+\infty}  \rho((\phi(n, {\omega_i(y)}, x_i^1))_{i\in \mathbb{Z}}, (\phi(n, {\omega_i(y)}, x_i^2))_{i\in
\mathbb{Z}})=0. \end{align*} Thus $\liminf \limits_{n\rightarrow +\infty}  d(f_{\omega}^n(x^1),
f_{\omega}^n(x^2))=0$.

On the other hand,  we take $L\in \mathbb{N}$ such that $\sum_{i\in \mathbb{Z},|i|\ge L}
\frac{1}{2^i}<\frac{1}{2}\kappa$. By \eqref{cha-eq} $$\limsup \limits_{n\rightarrow +\infty}
\rho((\phi(n, {\omega_i(y)}, x_i^1))_{i\in \mathbb{Z}}, (\phi(n, {\omega_i(y)}, x_i^2))_{i\in  \mathbb{Z}})\ge
4\kappa.$$ Hence there exist natural numbers $n_j,j\in \mathbb{N}$ such that $L<n_1<n_2<\cdots$  and
$$\rho((\phi(n_j, {\omega_i(y)}, x_i^1))_{i\in \mathbb{Z}}, (\phi(n_j, {\omega_i(y)}, x_i^2))_{i\in
\mathbb{Z}})>\frac{7}{2}\kappa.$$

Since $(x_i^1)_{i\in \mathbb{Z}}, (x_i^2)_{i\in \mathbb{Z}}\in \bar{K}_y$, we have that
$\phi(\omega_i(y))x_i^1=x_{i+1}^1$ and $\phi(\omega_i(y))x_i^2=x_{i+1}^2$ for $i\in \mathbb{Z}$. Moreover for each
$j\in \mathbb{N}$, $(\phi(n_j,{\omega_i(y)}, x_i^1))_{i\in \mathbb{Z}}=(x_{i+n_j}^1)_{i\in \mathbb{Z}}$ and
$(\phi(n_j, {\omega_i(y)}, x_i^2))_{i\in \mathbb{Z}}=(x_{i+n_j}^2)_{i\in \mathbb{Z}}$. Thus
$\rho((x_{i+n_j})_{i\in \mathbb{Z}}, (x_{i+n_j}^2)_{i\in  \mathbb{Z}})>\frac{7}{2}\kappa$ for each $j\in
\mathbb{N}$.
 Then
$$\sum_{i,|i|\le L}
\frac{1}{2^|i|}\frac{d(x_{i+n_j}^1,x_{i+n_j}^2)}{1+d(x_{i+n_j}^1,x_{i+n_j}^2)}>\frac{7}{2}\kappa-\sum_{i\in
\mathbb{Z},|i|\ge L} \frac{1}{2^i} \ge 3\kappa$$ for $j\in \mathbb{N}$. Thus for each $j\in \mathbb{N}$,
there exists $i_j\in \mathbb{Z}$ with $|i_j|\le L$ such that
$$\frac{d(x_{i_j+n_j}^1,x_{i_j+n_j}^2)}{1+d(x_{i+n_j}^1,x_{i_j+n_j}^2)}\ge \kappa,$$ which implies
$d(x_{i_j+n_j}^1,x_{i_j+n_j}^2)\ge \kappa$. Let $m_j=i_j+n_j>0$ for $j\in \mathbb{N}$. Then
$d(x_{m_j}^1,x_{m_j}^2)\ge \kappa$ and $\lim \limits_{j\rightarrow +\infty} m_j=+\infty$. Since
$$x_{m_j}^1=\phi(m_j, {\omega_0(y)}, x_0^1)=\phi(m_j, {\omega}, x^1) \text{ and }
x_{m_j}^2=\phi(m_j, {\omega_0(y)}, x_0^2)=\phi(m_j, {\omega}, x^2) \text{ }\forall j\in \mathbb{N}$$ we have that
$d(\phi(, m_j, \omega, x^1),\phi(m_j, \omega, x^2))=d(x_{m_j}^1,x_{m_j}^2)\ge \kappa$, $j\in \mathbb{N}$ and hence
$$\limsup \limits_{n\rightarrow +\infty}d(\phi(n, \omega, x^1),\phi(n, \omega, x^2))\ge \limsup \limits_{j\rightarrow
+\infty} d(\phi(m_j, \omega, x^1),\phi(m_j, \omega, x^2))\ge
\kappa.$$ Summerizing the above , we have that $$\liminf
\limits_{n\rightarrow +\infty}  d(\phi(n, {\omega}, x^1), \phi(n,
{\omega}, x^2))=0 \text{ and }\limsup \limits_{n\rightarrow
+\infty}d(\phi(n, \omega, x^1),\phi(n, \omega, x^2))\ge \kappa$$ for
a pair $(x^1,x^2)$  of distinct points in $S_\omega$. This shows
that $S_\omega$  is a chaotic set for $(\omega,f)$ and  completes
the proof of the theorem.
 \hfill$\square$

\appendix

\section{Proof of Basic Lemmas and Propositions}
In this appendix, we give the proof  of Lemma \ref{na=zero}, and several basic facts and their proofs about the ergodic decomposition, which we used in previous sections.

\smallskip

\noindent {\bf Proof of Lemma \ref{na=zero}} For $n\in \mathbb{Z}$, let
$\Pi_{n,X}:\bar{X}\rightarrow X$ with $\Pi_{n,X}((x_i)_{i\in \mathbb{Z}})=x_n$ for $\bar{x}=(x_i)_{i\in
\mathbb{Z}}\in X^{\mathbb{Z}}$.
Put
$$\bar{\mathcal{B}}_n=\Pi_{n,X}^{-1}(\mathcal{B}_X)$$ and
$\mathcal{D}_{\bar{X}}=\bigcup_{i\in \mathbb{Z}}\bar{\mathcal{B}}_i$. Clearly, \begin{align}\label{increase}
\bar{\mathcal{B}}_i\supseteq \bar{\mathcal{B}}_{i+1} \end{align} for each $i\in \mathbb{Z}$. Then
$\mathcal{D}_{\bar{X}}$ is a sub-algebra of $\bar{\mathcal{B}}$ and the $\sigma$-algebra generated by
$\mathcal{D}_{\bar{X}}$ is equal to $\bar{\mathcal{B}}$ (mod $\bar{\mu}$). Thus for any $A\in
\bar{\mathcal{B}}$ and $\epsilon>0$, there exists $A_\epsilon\in \mathcal{D}_{\bar{X}}$ such that
$\bar{\mu}(A\Delta A_\epsilon)<\epsilon$, where $A\Delta A_\epsilon=(A\setminus A_\epsilon)\cup
(A_\epsilon\setminus A)$ (see \cite[Theorem 0.7]{W}).

Let $\alpha=\{A_1,A_2,\cdots, A_k\}\in \mathcal{P}_{\bar{X}}$ with $k\ge 2$. For $m\in \mathbb{N}$,
there exists $\delta=\delta(k,m)>0$ such that for any $\beta=\{B_1,B_2,\cdots,B_k\}\in
\mathcal{P}_{\bar{X}}$, if $$\bar{\mu}(\alpha\Delta \beta):=\sum_{i=1}^k \bar{\mu}(A_i\Delta B_k)<\delta$$
then $H_\mu(\alpha|\beta)<\frac{1}{m}$ (see for example \cite[Lemma 4.15]{W}).

For  $i=1,2,\cdots,k-1$, we take $A_i'\in \mathcal{D}_{\bar{X}}$ with $\bar{\mu}(A_i\Delta
A_i')<\frac{\delta}{k^3}$. Let $A_k'=X\setminus \bigcup_{j=1}^{k-1}A_j'$. Then $A_k'\in
\mathcal{D}_{\bar{X}}$ and $$\bar{\mu}(A_k\Delta A_k')\le \mu(\bigcup_{j=1}^{k-1}A_j\Delta A_j')\le
\sum_{j=1}^{k-1} \mu(A_j\Delta A_j')<\frac{\delta}{k^2}$$ where the first inequality comes from the fact
$$A_k\Delta A_k'=(X\setminus \bigcup_{j=1}^{k-1}A_j)\Delta (X\setminus \bigcup_{j=1}^{k-1}A_j')\subseteq
\bigcup_{j=1}^{k-1} (A_j\Delta A_j').$$

Then let $C_1=A_1'$ and $$C_i=A_i'\setminus \bigcup_{j=1}^{i-1}C_j=A_i'\setminus \bigcup_{j=1}^{i-1}A_j'$$
for $i=2,\cdots,k$.  Clearly,  $C_i\in \mathcal{D}_{\bar{X}}$ for each $i\in \{1,2,\cdots,k\}$ and
$\gamma:=\{C_1,C_2,\cdots,C_k\}\in \mathcal{P}_{\bar{X}}$.

Since $\mathcal{D}_{\bar{X}}=\bigcup_{i\in \mathbb{Z}}\bar{\mathcal{B}}_i$, by \eqref{increase} we have
$i_*\in \mathbb{N}$ such  that $C_i\in  \bar{\mathcal{B}}_{-i_*}$ for each $i\in \{1,2,\cdots,k\}$. Thus,
there  exists  $D_i\in \mathcal{B}$ such that $\Pi_{-i_*,X}^{-1}(D_i)=C_i$ for $i=1,2,\cdots,k$. Let $\tau=\{
D_1,D_2,\cdots,D_k\}$. Then  $\tau\in \mathcal{P}_X$ and
$$\gamma=\Pi_{-i_*,X}^{-1}\tau=\bar{T}^{-i_*}\Pi_{0,X}^{-1}\tau=\bar{T}^{-i_*}\Pi_X^{-1}\tau.$$

For $i\in  \{1,2,\cdots,k\}$, we  have
\begin{align*} A_i\Delta C_i&=(A_i\setminus C_i)\cup (C_i\setminus
A_i)\subseteq (A_i\setminus (A_i'\setminus  \bigcup_{j=1}^{i-1}A_j'))\cup (A_i'\setminus A_i)\\
&=(A_i\setminus  A_i')\cup (A_i\cap \bigcup_{j=1}^{i-1}A_j')  \cup  (A_i'\setminus A_i)=(A_i\Delta A_i') \cup
(\bigcup_{j=1}^{i-1}  A_i\cap  A_j')\\ &\subseteq (A_i\Delta A_i') \cup (\bigcup_{j=1}^{i-1}  (A_i\cap  A_j)
\cup (A_i\cap (A_j'\setminus A_j)) )=(A_i\Delta A_i') \cup (\bigcup_{j=1}^{i-1}  A_i\cap (A_j'\setminus A_j)
\\ &\subseteq \bigcup_{j=1}^i (A_j\Delta A_j').
\end{align*}
Moreover,  $\bar{\mu}(A_i\Delta C_i)\le
\sum_{j=1}^i\mu(A_j\Delta A_j')<\frac{\delta}{k}$. Thus $\bar{\mu}(\alpha \Delta \gamma)=\sum_{i=1}^k
\bar{\mu}(A_i\Delta C_i)<\delta$. By the choice of $\delta$, we know
$H_{\bar{\mu}}(\alpha|\gamma)<\frac{1}{m}$.

Now for  $n\in \mathbb{N}$, we have
\begin{align*} &\hskip0.5cm
H_{\bar{\mu}}(\bigvee_{i=0}^{n-1}{\bar{T}}^{-i}\alpha|\Pi_X^{-1}(\mathcal{B}))=
H_{\bar{\mu}}(\bigvee_{i=0}^{n-1}{\bar{T}}^{-i}\alpha|\bar{\mathcal{B}}_0)=
H_{\bar{\mu}}(\bigvee_{i=0}^{n-1}{\bar{T}}^{-(i+i_*)}\alpha|{\bar{T}}^{-i_*}\bar{\mathcal{B}}_0)\\ &\le
H_{\bar{\mu}}(\bigvee_{i=0}^{n-1}{\bar{T}}^{-(i+i_*)}\alpha|{\bar{T}}^{-i_*}\Pi_X^{-1}(\bigvee_{i=0}^{n-1}T^{-i}\tau))=
H_{\bar{\mu}}(\bigvee_{i=0}^{n-1}{\bar{T}}^{-(i+i_*)}
\alpha|{\bar{T}}^{-i_*}(\bigvee_{i=0}^{n-1}\bar{T}^{-i}\Pi_X^{-1}\tau))\\
&=H_{\bar{\mu}}(\bigvee_{i=0}^{n-1}{\bar{T}}^{-(i+i_*)} \alpha|\bigvee_{j=0}^{n-1}\bar{T}^{-j}\gamma)\le
H_{\bar{\mu}}(\bigvee_{i=0}^{n+i_*-1}{\bar{T}}^{-i} \alpha|\bigvee_{j=0}^{n-1}\bar{T}^{-j}\gamma)\\ &\le
\sum_{i=0}^{n+i_*-1}H_{\bar{\mu}}(\bar{T}^{-i} \alpha|\bigvee_{j=0}^{n-1}\bar{T}^{-i}\gamma)\le \left(
\sum_{i=0}^{n-1}H_{\bar{\mu}}(\bar{T}^{-i}\alpha|\bar{T}^{-i}\gamma)
+\sum_{i=n}^{n+i_*-1}H_{\bar{\mu}}(\bar{T}^{-i}\alpha)\right) \\ &=\left( nH_{\bar{\mu}}(\alpha|\gamma)
+i_*H_{\bar{\mu}}(\alpha)\right). \end{align*} Using the above inequality, we have \begin{align*}
h_{\bar{\mu}}(\bar{T},\alpha|\Pi_X^{-1}(\mathcal{B}))&=\lim_{n\rightarrow +\infty}
\frac{1}{n}H_{\bar{\mu}}(\bigvee_{i=0}^{n-1}{\bar{T}}^{-i}\alpha|\Pi_X^{-1}(\mathcal{B})) \le
\lim_{n\rightarrow +\infty} \frac{1}{n} \left( nH_{\bar{\mu}}(\alpha|\gamma)
+i_*H_{\bar{\mu}}(\alpha)\right)\\ &=H_{\bar{\mu}}(\alpha|\gamma)<\frac{1}{m}. \end{align*}
 Since $m$ is arbitrary, $h_{\bar{\mu}}(\bar{T},\alpha|\Pi_X^{-1}(\mathcal{B}))=0$. This implies
 $h_{\bar{\mu}}(\bar{T}|\Pi_X)=0$ since
$\alpha$ is arbitrary. The proof is complete.
\hfill$\square$

\smallskip
Next, we investigate the ergodic decomposition of
conditional entropy. Let  $(X,\mathcal{B},\mu,T)$ be an invertible Lebesgue  system. We consider the sub-$\sigma$-algebra
$$
I_\mu(T)=\{  A\in \mathcal{B}: \mu(T^{-1}B\Delta B)=0\}.
$$ It is well known that there exists a factor map
$\phi:(X,\mathcal{B},\mu,T)\rightarrow (E,\mathcal{E},\eta,id_E)$ between two invertible Lebesgue systems
such that $\phi^{-1}(\mathcal{E})=I_\mu(T)$  (mod $\mu$)  and $(X,\mathcal{B}_e,\mu_e,T)$ is an ergodic
invertible Lebesgue system for $\eta$-a.e $e\in E$,  where $id_E$ is  the identity map from $E$ to itself,
$\mu=\int_E \mu_e d \eta(e)$ is the disintegration of $\mu$ relative to the  factor
$(E,\mathcal{E},\eta,id_E)$, and $(X,\mathcal{B}_e,\mu_e)$ is the corresponding Lebesgue space for $e\in  E$
in this disintegration  (see \cite{R}, \cite[Theorem 3.42]{G}).
More precisely, conditional probability measures $\{\mu_e\}_{e\in E}$ with the following
properties:
\begin{itemize} \item $\mu_e$  is a Lebesgue  measure on $X$ with $\mu_e(\phi^{-1}(e))=1$ for all
$y\in Y$.

\item for each $f \in L^1(X,\mathcal{B},\mu)$,  one  has $f \in L^1(X,\mathcal{B}_e,\mu_e)$ for $\eta$-a.e. $e\in E$, the map $e \mapsto
    \int_X f\,d\mu_e$ is in $L^1(E,\eta)$ and $\int_E \left(\int_X f\,d\mu_e \right)\, d\eta(e)=\int_X f
    \,d\mu$. Particularly, for each $A\in \mathcal{B}$, one has $A\in \mathcal{B}_e$ for $\eta$-a.e. $e\in E$.
\end{itemize}
The disintegration $\mu=\int_E \mu_e d
\eta(e)$ is called the  ergodic decomposition of $\mu$. The following result is well known (see \cite[Theorem
8.11]{R}, \cite{J} or \cite[Theorem 15.12]{G}).

\begin{lem}\label{edep} (Ergodic decomposition of entropy  for partition) Let $(X,\mathcal{B},\mu,T)$ be an
invertible Lebesgue system and $\mu=\int_E \mu_e d \eta(e)$ be  the  ergodic decomposition of $\mu$. Then for
any $\alpha\in \mathcal{P}_X(\mathcal{B})$, $$h_\mu(T,\alpha)=\int_E h_{\mu_e}(T,\alpha)  d \eta(e).$$
\end{lem}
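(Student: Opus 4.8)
The plan is to compute $h_\mu(T,\alpha)$ by passing through the conditional entropy relative to the invariant $\sigma$-algebra $I_\mu(T)=\{A\in\mathcal{B}:\mu(T^{-1}A\Delta A)=0\}$, which is exactly the $\sigma$-algebra underlying the ergodic decomposition: the factor map satisfies $\phi^{-1}(\mathcal{E})=I_\mu(T)$ (mod $\mu$), by \eqref{meas3} one has $\mathbb{E}_\mu(1_B|I_\mu(T))(x)=\mu_{\phi(x)}(B)$, and since the factor system on $E$ is $(E,\mathcal{E},\eta,id_E)$ we get $T\mu_e=\mu_e$ for $\eta$-a.e.\ $e$. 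I would then prove the two identities
$$h_\mu(T,\alpha|I_\mu(T))=\int_E h_{\mu_e}(T,\alpha)\,d\eta(e)\qquad\text{and}\qquad h_\mu(T,\alpha|I_\mu(T))=h_\mu(T,\alpha),$$
and the lemma is obtained by combining them.

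For the first identity, I would first record that for an arbitrary finite partition $\beta$,
$H_\mu(\beta|I_\mu(T))=\int_E H_{\mu_e}(\beta)\,d\eta(e)$; this is immediate from the definition of conditional entropy, the formula $\mathbb{E}_\mu(1_B|I_\mu(T))=\mu_{\phi(\cdot)}(B)$ (which is constant on fibers of $\phi$), and the change of variables $\mu\circ\phi^{-1}=\eta$. Applying this with $\beta=\bigvee_{i=0}^{n-1}T^{-i}\alpha$, dividing by $n$ and letting $n\to\infty$, I may pass the limit through the integral by dominated convergence: $\tfrac1n H_{\mu_e}(\bigvee_{i=0}^{n-1}T^{-i}\alpha)\le H_{\mu_e}(\alpha)\le\log\#(\alpha)$ uniformly, and by subadditivity applied to the $T$-invariant measure $\mu_e$ one has $\tfrac1n H_{\mu_e}(\bigvee_{i=0}^{n-1}T^{-i}\alpha)\to h_{\mu_e}(T,\alpha)$ for $\eta$-a.e.\ $e$; on the left-hand side the same limit yields $h_\mu(T,\alpha|I_\mu(T))$ since $T^{-1}I_\mu(T)=I_\mu(T)$.

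For the second identity, the inequality $h_\mu(T,\alpha|I_\mu(T))\le h_\mu(T,\alpha)$ is just monotonicity of conditional entropy. For the reverse inequality the key point is that $I_\mu(T)$ carries no entropy: every $A\in I_\mu(T)$ satisfies $T^{-1}\{A,X\setminus A\}=\{A,X\setminus A\}$ (mod $\mu$), so $h_\mu(T,\beta)=0$ for any finite $\beta\subseteq I_\mu(T)$. Pick finite partitions $\beta_k\nearrow I_\mu(T)$. By the relative Pinsker formula (Lemma~\ref{RPF}) with trivial base $\sigma$-algebra, $h_\mu(T,\alpha\vee\beta_k)=h_\mu(T,\beta_k)+h_\mu(T,\alpha|\beta_k^T)=h_\mu(T,\alpha|\beta_k^T)$, hence $h_\mu(T,\alpha|\beta_k^T)\ge h_\mu(T,\alpha)$. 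Since $\beta_k$ increases and $I_\mu(T)$ is $T$-invariant, $\beta_k^T\nearrow I_\mu(T)$, and using $H_\mu(\gamma|\mathcal{C}_k)\searrow H_\mu(\gamma|\mathcal{C})$ when $\mathcal{C}_k\nearrow\mathcal{C}$ (which after taking $\inf_n\tfrac1n$ gives $h_\mu(T,\alpha|\beta_k^T)\searrow h_\mu(T,\alpha|I_\mu(T))$), letting $k\to\infty$ yields $h_\mu(T,\alpha|I_\mu(T))\ge h_\mu(T,\alpha)$.

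The routine parts (measurability of $e\mapsto H_{\mu_e}(\beta)$, the elementary interchange of $\inf_n$ with $\lim_k$, the martingale convergence of conditional entropies) are short. The genuine obstacle is the second identity, namely that conditioning on the invariant $\sigma$-algebra does not decrease $h_\mu(T,\alpha)$; this is where invertibility of $T$ and the relative Pinsker formula are really used. I note in passing that the ``$\ge$''-half of the lemma's conclusion, $h_\mu(T,\alpha)\ge\int_E h_{\mu_e}(T,\alpha)\,d\eta(e)$, is elementary: it follows from the first identity together with the concavity of $t\mapsto-t\log t$, so that only the reverse inequality requires the argument above.
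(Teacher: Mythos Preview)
The paper does not give its own proof of this lemma; it simply cites it as well known (Rohlin \cite[Theorem 8.11]{R}, Jacobs \cite{J}, Glasner \cite[Theorem 15.12]{G}). Your argument is correct and is essentially the standard one found in those references: reduce to the conditional entropy relative to the invariant $\sigma$-algebra $I_\mu(T)$ via the disintegration identity $H_\mu(\beta\mid I_\mu(T))=\int_E H_{\mu_e}(\beta)\,d\eta(e)$, and then show that conditioning on $I_\mu(T)$ does not change $h_\mu(T,\alpha)$ because $I_\mu(T)$ has zero entropy. Two small remarks: the existence of finite partitions $\beta_k\nearrow I_\mu(T)$ uses that $I_\mu(T)=\phi^{-1}(\mathcal{E})$ (mod $\mu$) with $(E,\mathcal{E},\eta)$ Lebesgue, hence countably generated, which the paper's setup provides; and your passing comment about the ``$\ge$'' half is slightly muddled---it follows either from concavity of $\mu\mapsto H_\mu(\beta)$ alone, or from your first identity plus the trivial inequality $h_\mu(T,\alpha)\ge h_\mu(T,\alpha\mid I_\mu(T))$, but not from ``the first identity together with concavity''.
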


\begin{lem}\label{relative-entropy} Let $(X,\mathcal{B},\mu, T)$ be a MDS.  Then for  any $\alpha,\beta\in
\mathcal{P}_X$, $$\inf_{n\ge  1}\frac{1}{n}
H_\mu(\bigvee_{i=0}^{n-1}T^{-i}\alpha|\bigvee_{i=0}^{n-1}T^{-i}\beta)=h_\mu(T,\alpha\vee
\beta)-h_\mu(T,\alpha).$$
\end{lem}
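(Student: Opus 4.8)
The plan is to express the conditional quantity as a difference of two ordinary entropy sequences via the chain rule, identify its limit, and then verify subadditivity so that the infimum coincides with that limit. Throughout set $\gamma_n=\bigvee_{i=0}^{n-1}T^{-i}\alpha$, $\delta_n=\bigvee_{i=0}^{n-1}T^{-i}\beta$, and $a_n=H_\mu(\gamma_n|\delta_n)$.

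First I would apply the chain rule $H_\mu(\gamma\vee\delta)=H_\mu(\delta)+H_\mu(\gamma|\delta)$ to $\gamma_n,\delta_n$. Since $T^{-i}(\alpha\vee\beta)=T^{-i}\alpha\vee T^{-i}\beta$, we have $\gamma_n\vee\delta_n=\bigvee_{i=0}^{n-1}T^{-i}(\alpha\vee\beta)$, so
$$a_n=H_\mu\Big(\bigvee_{i=0}^{n-1}T^{-i}(\alpha\vee\beta)\Big)-H_\mu\Big(\bigvee_{i=0}^{n-1}T^{-i}\beta\Big).$$
Dividing by $n$ and letting $n\to+\infty$, each term on the right converges to the corresponding partition entropy, whence $\lim_{n\to+\infty}\frac1n a_n=h_\mu(T,\alpha\vee\beta)-h_\mu(T,\beta)$.

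Next I would show $(a_n)$ is subadditive, which upgrades this limit to the infimum. Writing $\gamma_{n+m}=\gamma_n\vee T^{-n}\gamma_m$ and $\delta_{n+m}=\delta_n\vee T^{-n}\delta_m$, subadditivity of $H_\mu(\,\cdot\,|\delta_{n+m})$ together with the monotonicity of conditional entropy under refinement of the conditioning partition gives
$$a_{n+m}\le H_\mu(\gamma_n|\delta_{n+m})+H_\mu(T^{-n}\gamma_m|\delta_{n+m})\le H_\mu(\gamma_n|\delta_n)+H_\mu(T^{-n}\gamma_m|T^{-n}\delta_m).$$
By $T$-invariance of $\mu$ the last summand equals $H_\mu(\gamma_m|\delta_m)=a_m$, so $a_{n+m}\le a_n+a_m$; hence $\inf_{n\ge1}\frac1n a_n=\lim_{n\to+\infty}\frac1n a_n=h_\mu(T,\alpha\vee\beta)-h_\mu(T,\beta)$.

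I expect the subadditivity step to be the only substantive point: $a_n$ is the difference of the two subadditive sequences $H_\mu(\bigvee_{i=0}^{n-1}T^{-i}(\alpha\vee\beta))$ and $H_\mu(\bigvee_{i=0}^{n-1}T^{-i}\beta)$, and that structure by itself does not force $(a_n)$ to be subadditive; it is the refinement-monotonicity and invariance estimate above that supplies it. Finally I would point out that the identity thus obtained carries $h_\mu(T,\beta)$ on the right, matching the left side, which conditions $\bigvee T^{-i}\alpha$ on $\bigvee T^{-i}\beta$; the printed right-hand term $h_\mu(T,\alpha)$ is the one appropriate to the conditional entropy with $\alpha$ and $\beta$ interchanged, so the statement should read $h_\mu(T,\alpha\vee\beta)-h_\mu(T,\beta)$.
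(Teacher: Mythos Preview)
Your proof is correct and follows essentially the same approach as the paper: both establish subadditivity of $a_n=H_\mu(\bigvee_{i=0}^{n-1}T^{-i}\alpha\mid\bigvee_{i=0}^{n-1}T^{-i}\beta)$ via the same splitting and monotonicity/invariance estimate, invoke Fekete's lemma to equate the infimum with the limit, and then use the chain rule to identify the limit as $h_\mu(T,\alpha\vee\beta)-h_\mu(T,\beta)$. You are also right that the printed statement contains a typo: the paper's own proof ends with $h_\mu(T,\alpha\vee\beta)-h_\mu(T,\beta)$, not $h_\mu(T,\alpha\vee\beta)-h_\mu(T,\alpha)$.
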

\begin{proof} For  $\alpha,\beta\in \mathcal{P}_X$,  let
$a_n=H_\mu(\bigvee_{i=0}^{n-1}T^{-i}\alpha|\bigvee_{i=0}^{n-1}T^{-i}\beta)$ for $n\in \mathbb{N}$. Then, for
$n,m\in \mathbb{N}$, we have
\begin{align*}
a_{n+m}&=H_\mu(\bigvee_{i=0}^{n+m-1}T^{-i}\alpha|\bigvee_{i=0}^{n+m-1}T^{-i}\beta)\\ &\le
H_\mu(\bigvee_{i=0}^{n-1}T^{-i}\alpha|\bigvee_{i=0}^{n+m-1}T^{-i}\beta)+H_\mu(T^{-n}(\bigvee_{i=0}^{m-1}T^{-i}\alpha)|\bigvee_{i=0}^{n+m-1}T^{-i}\beta)\\
&\le a_n+H_\mu(T^{-n}(\bigvee_{i=0}^{m-1}T^{-i}\alpha)|T^{-n}(\bigvee_{i=0}^{m-1}T^{-i}\beta))\\ &=a_n+a_m.
\end{align*} By Theorem 4.9 in \cite{W}, $\inf_{n\ge 1}\frac{1}{n}a_n=\lim_{n\rightarrow
+\infty}\frac{1}{n}a_n$. Thus, \begin{align*} &\hskip0.5cm \inf_{n\ge  1}\frac{1}{n}
H_\mu(\bigvee_{i=0}^{n-1}T^{-i}\alpha|\bigvee_{i=0}^{n-1}T^{-i}\beta)\\ &=\lim_{n\rightarrow
+\infty}\frac{1}{n} H_\mu(\bigvee_{i=0}^{n-1}T^{-i}\alpha|\bigvee_{i=0}^{n-1}T^{-i}\beta)\\
&=\lim_{n\rightarrow +\infty}\left( \frac{1}{n} H_\mu(\bigvee_{i=0}^{n-1}T^{-i}(\alpha\vee
\beta))-H_\mu(\bigvee_{i=0}^{n-1}T^{-i}\beta)\right)\\ &=\lim_{n\rightarrow +\infty}\frac{1}{n}
H_\mu(\bigvee_{i=0}^{n-1}T^{-i}(\alpha\vee \beta))-\lim_{n\rightarrow
+\infty}H_\mu(\bigvee_{i=0}^{n-1}T^{-i}\beta)\\ &=h_\mu(T,\alpha\vee \beta)-h_\mu(T,\beta). \end{align*} This
completes the proof of  Lemma. \end{proof}

Let  $(X,\mathcal{B},\mu,T)$ be an invertible Lebesgue  system.  A sub-$\sigma$-algebra $\mathcal{C}$ of
$\mathcal{B}$ is called {\it countably generated} if there exists $\{ A_i\}_{i=1}^\infty \subset \mathcal{C}$
such that $\mathcal{C}$ is the $\sigma$-algebra generated by $\{  A_i\}_{i=1}^\infty$, i.e., $\mathcal{C}$ is
the smallest $\sigma$-algebra containing all $A_i, i\in \mathbb{N}$.

\begin{lem}\label{cond-ed} (Ergodic decomposition of conditional entropy  for partition) Let
$(X,\mathcal{B},\mu,T)$ be an  invertible Lebesgue system, $\mu=\int_E \mu_e d \eta(e)$  be the  ergodic
decomposition of $\mu$, and $\mathcal{C}$ be a countably generated  sub-$\sigma$-algebra of $\mathcal{B}$ with
$T^{-1}\mathcal{C}\subseteq \mathcal{C}$. Then, for each $\alpha\in \mathcal{P}_X(\mathcal{B})$,
$$h_\mu(T,\alpha|\mathcal{C})=\int_E h_{\mu_e}(T,\alpha|\mathcal{C}) d \eta(e).$$
\end{lem}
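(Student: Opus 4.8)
The plan is to reduce the statement to the unconditional ergodic decomposition (Lemma~\ref{edep}) together with the relative version of the Pinsker--type identity (Lemma~\ref{relative-entropy}). The starting observation is that, since $\mathcal{C}$ is countably generated and $T^{-1}\mathcal{C}\subseteq\mathcal{C}$, we may pick an increasing sequence of finite measurable partitions $\beta_k\nearrow\mathcal{C}$ and form the ``iterated'' partitions $\gamma_{k,N}=\bigvee_{i=0}^{N-1}T^{-i}\beta_k$. Because $H_\mu(\alpha\mid\mathcal{C})=\lim_{k\to\infty}H_\mu(\alpha\mid\beta_k)$ and likewise fibrewise (the conditional expectations converge by the martingale convergence theorem), I expect to approximate $h_\mu(T,\alpha\mid\mathcal{C})$ by quantities of the form $h_\mu(T,\alpha\mid\beta_k^{T})$, or more directly by $\inf_N\frac1N H_\mu(\bigvee_{i=0}^{N-1}T^{-i}\alpha\mid\gamma_{k,N})$, each of which is genuinely an unconditional entropy-difference by Lemma~\ref{relative-entropy}.

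Concretely, first I would record the identity, valid for any finite $\beta$ with $T^{-1}\beta^T=\beta^T$ (here $\beta^T=\bigvee_{i\in\mathbb{Z}}T^{-i}\beta$, but for the one-sided cocycle it suffices to use $\bigvee_{i\ge0}T^{-i}\beta$ since $T^{-1}\mathcal{C}\subseteq\mathcal{C}$),
\[
h_\mu(T,\alpha\mid\beta_k)=h_\mu(T,\alpha\vee\beta_k)-h_\mu(T,\beta_k),
\]
which is exactly the content of Lemma~\ref{relative-entropy} rewritten via $\inf_N\frac1N H_\mu(\bigvee T^{-i}\alpha\mid\bigvee T^{-i}\beta_k)$. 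Then apply Lemma~\ref{edep} to each of the two partitions $\alpha\vee\beta_k$ and $\beta_k$ to get
\[
h_\mu(T,\alpha\mid\beta_k)=\int_E\big(h_{\mu_e}(T,\alpha\vee\beta_k)-h_{\mu_e}(T,\beta_k)\big)\,d\eta(e)
=\int_E h_{\mu_e}(T,\alpha\mid\beta_k)\,d\eta(e),
\]
where in the last equality I use Lemma~\ref{relative-entropy} again, now for the ergodic system $(X,\mathcal{B}_e,\mu_e,T)$. This handles every finite approximant $\beta_k$ exactly.

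The final step is to let $k\to\infty$ and push the limit inside the integral. On the left, $h_\mu(T,\alpha\mid\beta_k)\searrow h_\mu(T,\alpha\mid\mathcal{C})$ since $\beta_k\nearrow\mathcal{C}$ (conditional entropy decreases in the conditioning $\sigma$-algebra, and one has continuity along increasing generating sequences). On the right, for each fixed $e$ the same monotonicity gives $h_{\mu_e}(T,\alpha\mid\beta_k)\searrow h_{\mu_e}(T,\alpha\mid\mathcal{C})$, and since $0\le h_{\mu_e}(T,\alpha\mid\beta_k)\le\log\#(\alpha)$ the dominated convergence theorem applies; thus $\int_E h_{\mu_e}(T,\alpha\mid\beta_k)\,d\eta(e)\to\int_E h_{\mu_e}(T,\alpha\mid\mathcal{C})\,d\eta(e)$. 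Combining the two limits gives the claim.

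The main obstacle I anticipate is the passage to the limit on the fibre side: one must be sure that ``$h_{\mu_e}(T,\alpha\mid\beta_k)$ converges to $h_{\mu_e}(T,\alpha\mid\mathcal{C})$ for $\eta$-a.e.\ $e$'', which requires that $\{\beta_k\}$ still generates $\mathcal{C}$ when viewed inside the fibre measure space $(X,\mathcal{B}_e,\mu_e)$ — i.e.\ that the disintegration respects the chosen generating sequence for $\eta$-a.e.\ $e$. This is a standard but slightly delicate measurability point (it follows from the second bullet in the ergodic-decomposition properties recalled before Lemma~\ref{edep}, applied to the countably many generators $A_i$ of $\mathcal{C}$ simultaneously), and it is also where the hypothesis that $\mathcal{C}$ be countably generated is essential. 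Everything else is bookkeeping with the sub-additivity/monotonicity facts already collected in the excerpt.
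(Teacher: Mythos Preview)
Your proposal is correct and matches the paper's own proof essentially line for line: approximate $\mathcal{C}$ by finite partitions $\beta_k$, use Lemma~\ref{relative-entropy} to rewrite the approximate conditional entropy as $h_\mu(T,\alpha\vee\beta_k)-h_\mu(T,\beta_k)$, apply Lemma~\ref{edep} to each term, reassemble fibrewise via Lemma~\ref{relative-entropy} again, and pass to the limit by monotone (equivalently, dominated) convergence. The delicate point you flag---that $\beta_k\nearrow\mathcal{C}$ continues to hold in $(X,\mathcal{B}_e,\mu_e)$ for $\eta$-a.e.\ $e$, which is where countable generation of $\mathcal{C}$ is used---is precisely the point the paper isolates at the start of its argument before carrying out the same computation.
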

\begin{proof} Let
$(X,\mathcal{B}_e,\mu_e,T)$ be the corresponding Lebesgue systems for $e\in  E$ in the
 ergodic decomposition of $\mu$. Since $\mathcal{C}$ is a countably  generated  sub-$\sigma$-algebra of
 $\mathcal{B}$,
 there exists $\{ A_i\}_{i=1}^\infty \subset \mathcal{C}$ such that $\mathcal{C}$ is the $\sigma$-algebra
 generated by
 $\{  A_i\}_{i=1}^\infty$. Let  $\alpha=\{B_1,B_2,\cdots,B_k\}\in \mathcal{P}_X(\mathcal{B})$.
 Since  $A\in \mathcal{B}_e$ for $\eta$-a.e.  $e\in E$ when $A\in  \mathcal{B}$,
 one  has that for
 $\eta$-a.e.  $e\in E$,
$\{B_1,B_2,\cdots,B_k\} \cup \{A_i:i\in \mathbb{N}\}\subseteq\mathcal{B}_e$. Thus for $\eta$-a.e.  $e\in E$,
$\mathcal{B}_e$ contains the $\sigma$-algebra generated by $\mathcal{C}\cup  (\bigcup_{i\in \mathbb{Z}}
T^{i}\{B_1,B_2,\cdots,B_k\} )$ since  $\mathcal{B}_e$ is $\sigma$-algebra and $T^{-1}\mathcal{B}_e=\mathcal{B}_e$.

Let $\beta_j=\bigvee_{i=1}^j \{  A_i,X\setminus  A_i\}$  for  $j\in \mathbb{N}$. Then for each $n\in
\mathbb{N}$, when $j\rightarrow +\infty$, we have $\bigvee_{i=0}^{n-1}T^{-i}\beta_j\nearrow \mathcal{C}$ (mod
$\mu$)  and $\bigvee_{i=0}^{n-1}T^{-i}\beta_j\nearrow \mathcal{C}$ (mod $\mu_e$) for $\eta$-a.e. $e\in E$.
Thus,
\begin{align} H_\mu(\bigvee_{i=1}^{n-1}T^{-i}\alpha|\mathcal{C})=\lim_{j\rightarrow
+\infty}H_\mu(\bigvee_{i=1}^{n-1}T^{-i}\alpha|\bigvee_{i=0}^{n-1}T^{-i}\beta_j)=\inf_{j\ge
1}H_\mu(\bigvee_{i=1}^{n-1}T^{-i}\alpha|\bigvee_{i=0}^{n-1}T^{-i}\beta_j).
\end{align}
Moveover,
\begin{align*} h_\mu(T,\alpha|\mathcal{C})&=\inf_{n\ge 1}
\frac{1}{n}H_\mu(\bigvee_{i=1}^{n-1}T^{-i}\alpha|\mathcal{C})=\inf_{n\ge 1}\left(\inf_{j\ge
1}\frac{1}{n}H_\mu(\bigvee_{i=0}^{n-1}T^{-i}\alpha|\bigvee_{i=0}^{n-1}T^{-i}\beta_j)\right)\\ &=\inf_{j\ge
1}\left(\inf_{n\ge
1}\frac{1}{n}H_\mu(\bigvee_{i=1}^{n-1}T^{-i}\alpha|\bigvee_{i=0}^{n-1}T^{-i}\beta_j)\right)\\ &=\inf_{j\ge
1}\left(h_\mu(T,\alpha\vee \beta_j)-h_\mu(T,\beta_j)\right).   \  \  \text{(by lemma \ref{relative-entropy})}
\end{align*}
That is
\begin{align}\label{limit-1}
\begin{aligned} h_\mu(T,\alpha|\mathcal{C})&=\inf_{j\ge
1}\left(\inf_{n\ge
1}\frac{1}{n}H_\mu(\bigvee_{i=1}^{n-1}T^{-i}\alpha|\bigvee_{i=0}^{n-1}T^{-i}\beta_j)\right)\\ &=\inf_{j\ge
1}\left(h_\mu(T,\alpha\vee \beta_j)-h_\mu(T,\beta_j)\right).
\end{aligned}
\end{align} Similarly,  for
$\eta$-a.e. $e\in E$, we have
\begin{align}\label{limit-2}
\begin{aligned}
h_{\mu_e}(T,\alpha|\mathcal{C})&=\inf_{j\ge 1}\left(\inf_{n\ge
1}\frac{1}{n}H_\mu(\bigvee_{i=1}^{n-1}T^{-i}\alpha|\bigvee_{i=0}^{n-1}T^{-i}\beta_j)\right)\\ &=\inf_{j\ge
1}\left(h_{\mu_e}(T,\alpha\vee \beta_j)-h_{\mu_e}(T,\beta_j)\right).
\end{aligned}
\end{align} Using
\eqref{limit-1}, Lemma \ref{edep}, Lemma \ref{relative-entropy},  and \eqref{limit-2}, we obtain
\begin{align*}
h_\mu(T,\alpha|\mathcal{C})&=\inf_{j\ge 1}\left(h_\mu(T,\alpha\vee \beta_j)-h_\mu(T,\beta_j)\right)\\
&=\inf_{j\ge 1}\int_E \left(h_{\mu_e}(T,\alpha\vee \beta_j)-h_{\mu_e}(T,\beta_j)\right)d \eta(e)\\
&=\inf_{j\ge 1}\int_E \inf_{n\ge
1}\frac{1}{n}H_{\mu_e}(\bigvee_{i=1}^{n-1}T^{-i}\alpha|\bigvee_{i=0}^{n-1}T^{-i}\beta_j) d \eta(e)\\
&=\lim_{j\rightarrow +\infty}\int_E \inf_{n\ge
1}\frac{1}{n}H_{\mu_e}(\bigvee_{i=1}^{n-1}T^{-i}\alpha|\bigvee_{i=0}^{n-1}T^{-i}\beta_j) d \eta(e)\\ &=\int_E
\lim_{j\rightarrow +\infty}\left(\inf_{n\ge
1}\frac{1}{n}H_{\mu_e}(\bigvee_{i=1}^{n-1}T^{-i}\alpha|\bigvee_{i=0}^{n-1}T^{-i}\beta_j)\right) d \eta(e) \\
&\hskip1cm \ \text{(by Monotone  convergence Theorem)}\\ &=\int_E \inf_{j\ge 1}\left(\inf_{n\ge
1}\frac{1}{n}H_{\mu_e}(\bigvee_{i=1}^{n-1}T^{-i}\alpha|\bigvee_{i=0}^{n-1}T^{-i}\beta_j) \right)d \eta(e)\\
&=\int_E h_{\mu_e}(T,\alpha|\mathcal{C}) \, d \eta(e).
\end{align*} This completes  the  proof of Lemma.
\end{proof}

Let $(X,\mathcal{B}_X,\mu,T)$ be a Polish system. Let $\mathcal{B}_\mu$ be the completion of the Borel
$\sigma$-algebra $\mathcal{B}_X$ with  respect to $\mu$.  Then $(X,\mathcal{B}_\mu,\mu,T)$ is a Lebesgue
system. Put $$\bar{X}=\{ \bar{x}=(x_i)_{i\in  \mathbb{Z}}\in  X^{\mathbb{Z}}: \, Tx_{i}=x_{i+1}, i\in
\mathbb{Z}\}$$ and  let $\Pi_X:(\bar{X},\bar{\mathcal{B}_\mu},\bar{\mu},\bar{T})\rightarrow
(X,\mathcal{B}_X,\mu,T)$ be the natural extension of $(X,\mathcal{B}_X,\mu,T)$.

Let $\bar{\mu}=\int_E \bar{\mu}_e d \eta(e)$ be the  ergodic decomposition of $\bar{\mu}$ and
$(\bar{X},(\bar{\mathcal{B}_\mu})_e,\bar{\mu}_e,\bar{T})$ be the corresponding Lebesgue systems for $e\in  E$
 in
the ergodic decomposition of $\bar{\mu}$. Since $\Pi_X^{-1}(\mathcal{B}_X)$ is a countably generated
sub-$\sigma$-algebra of $\mathcal{B}_{\bar{\mu}}$, one  knows that for $\eta$-a.e. $e\in E$,
$\Pi_X^{-1}(\mathcal{B}_X)\subset (\bar{\mathcal{B}_{\mu}})_e$.
Therefore, for $\eta$-a.e. $e\in E$, letting
$\mu_e(A)=\bar{\mu}_e(\Pi_X^{-1}A)$ for $A\in \mathcal{B}_X$, we have that  $(X,\mathcal{B}_X,\mu_e,T)$ is an ergodic
Polish system. It is clear that for any $f\in L^1(X,\mathcal{B}_X,\mu)$, one has
that $f\circ \Pi_X\in L^1(\bar{X},\Pi_X^{-1}(\mathcal{B}_X),\bar{\mu})$. Hence the map $e\in
E  \mapsto\int_X f(x)  d \mu_e(x)(:=\int_{\bar{X}}f\circ \Pi_X d \bar{\mu}_e)$  is  $\eta$-measurable  and
$$\int_E (\int_X f d  \mu_e) d \eta(e)=\int_E (\int_{\bar{X}}f\circ \Pi_X d \bar{\mu}_e)
d\eta(e)=\int_{\bar{X}} f\circ \Pi_X d \bar{\mu}=\int_X f d \mu.$$ That is, for any  $f\in
L^1(X,\mathcal{B}_X,\mu)$, one has \begin{equation}\label{eq-ed-key} \text{the map }e\in E\mapsto \int_X f(x)
d \mu_e(x)\text{ is $\eta$-measurable  and } \int_E (\int_X f d  \mu_e)  d \eta(e)=\int_X f d \mu.
\end{equation} In this case,  we say that $\mu=\int_E \mu_e d \eta(e)$ is {\it the ergodic decomposition  of
$\mu$}.

\begin{lem}\label{ergodic} Let $\pi:(X,\mathcal{B}_X,\mu,T)\rightarrow (Y,\mathcal{B}_Y,\nu,S)$ be a factor
map between two Polish systems and $(Y,\mathcal{B}_Y,\nu,S)$  be ergodic.  If $\mu=\int_E \mu_e d \eta(e)$ is
the ergodic decomposition  of $\mu$, then

\begin{enumerate} \item  for $\eta$-a.e. $e\in  E$, $\pi:(X,\mathcal{B}_X,\mu_e,T)\rightarrow
(Y,\mathcal{B}_Y,\nu,S)$ is
    a factor map between two Polish systems.

\item $h_\mu(T|\pi^{-1}(\mathcal{B}_Y))=\int_E h_{\mu_e}(T|\pi^{-1}(\mathcal{B}_Y)) d \eta(e)$. That is,
    $h_\mu(T|\pi)=\int_E h_{\mu_e}(T|\pi) d \eta(e)$. \end{enumerate} \end{lem}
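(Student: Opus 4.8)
The plan is to reduce both parts to statements about the invertible Lebesgue systems obtained by natural extension, where Lemma~\ref{cond-ed} (ergodic decomposition of the conditional entropy of a single partition) is available. Throughout, $\mu=\int_E\mu_e\,d\eta(e)$ denotes the ergodic decomposition of $\mu$ as set up just above, so $\mu_e(A)=\bar\mu_e(\Pi_X^{-1}A)$, where $\bar\mu=\int_E\bar\mu_e\,d\eta(e)$ is the ergodic decomposition of the natural extension $\Pi_X\colon(\bar X,\bar{\mathcal B_\mu},\bar\mu,\bar T)\to(X,\mathcal B_X,\mu,T)$ and each $(X,\mathcal B_X,\mu_e,T)$ is an ergodic Polish system. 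For part~(1): since $\pi T=S\pi$, each $\pi_*\mu_e$ is $S$-invariant, and it is $S$-ergodic for $\eta$-a.e.\ $e$ because $\pi$ is intertwining and $\mu_e$ is $T$-ergodic (if $S^{-1}B=B$ then $T^{-1}\pi^{-1}B=\pi^{-1}B$, so $\mu_e(\pi^{-1}B)\in\{0,1\}$). Applying \eqref{eq-ed-key} to $f=\mathbf 1_{\pi^{-1}B}$ gives $\int_E\pi_*\mu_e(B)\,d\eta(e)=\mu(\pi^{-1}B)=\nu(B)$, i.e.\ $\nu=\int_E\pi_*\mu_e\,d\eta(e)$. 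Thus the ergodic measure $\nu$ is written as an integral of ergodic measures; comparing Birkhoff averages of a countable family of bounded functions separating the Borel probability measures on $Y$ (such an average converges $\nu$-a.e.\ to its $\nu$-integral and $\pi_*\mu_e$-a.e.\ to its $\pi_*\mu_e$-integral, and a $\nu$-conull set is $\pi_*\mu_e$-conull for $\eta$-a.e.\ $e$) forces $\pi_*\mu_e=\nu$ for $\eta$-a.e.\ $e$. Hence $\pi\colon(X,\mathcal B_X,\mu_e,T)\to(Y,\mathcal B_Y,\nu,S)$ is a factor map of Polish systems for $\eta$-a.e.\ $e$, which is~(1).

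For part~(2), form the natural extension $\Pi_Y\colon(\bar Y,\bar{\mathcal B_\nu},\bar\nu,\bar S)\to(Y,\mathcal B_Y,\nu,S)$ and the induced factor map $\bar\pi\colon(\bar X,\bar{\mathcal B_\mu},\bar\mu,\bar T)\to(\bar Y,\bar{\mathcal B_\nu},\bar\nu,\bar S)$, $(x_i)_{i\in\mathbb Z}\mapsto(\pi(x_i))_{i\in\mathbb Z}$, so that $\pi\circ\Pi_X=\Pi_Y\circ\bar\pi$. Exactly as in the derivation of \eqref{na=en-1} in the proof of Theorem~\ref{MTH1}, Lemma~\ref{GAR} and Lemma~\ref{na=zero} give $h_\mu(T|\pi)=h_{\bar\mu}(\bar T|\bar\pi)$. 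Moreover, for $\eta$-a.e.\ $e$ the relation $(\Pi_{0,X})_*\bar\mu_e=\mu_e$ (which, by $\bar T$-invariance, forces $(\Pi_{n,X})_*\bar\mu_e=\mu_e$ for all $n$) identifies $(\bar X,\bar{\mathcal B_{\mu_e}},\bar\mu_e,\bar T)$ with the natural extension of $(X,\mathcal B_X,\mu_e,T)$, while $(\bar\pi)_*\bar\mu_e$ is $\bar S$-invariant and projects under $\Pi_{0,Y}$ to $\pi_*\mu_e=\nu$ (by part~(1)), hence equals $\bar\nu$ by uniqueness of the natural-extension measure; so $\bar\pi$ restricts to a factor map of the $e$-components. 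Running the same Abramov--Rohlin computation with $\mu_e$ in place of $\mu$ therefore yields $h_{\mu_e}(T|\pi)=h_{\bar\mu_e}(\bar T|\bar\pi)$ for $\eta$-a.e.\ $e$. Hence it suffices to prove the identity $h_{\bar\mu}(\bar T|\bar\pi)=\int_E h_{\bar\mu_e}(\bar T|\bar\pi)\,d\eta(e)$ inside the invertible Lebesgue system $(\bar X,\bar{\mathcal B_\mu},\bar\mu,\bar T)$ with its ergodic decomposition $\bar\mu=\int_E\bar\mu_e\,d\eta(e)$.

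Put $\mathcal C=\bar\pi^{-1}(\mathcal B_{\bar Y})$. This is countably generated ($\bar Y$ being Polish) and $\bar T$-invariant, and since $(\bar\pi)_*\bar\mu=\bar\nu$ and $(\bar\pi)_*\bar\mu_e=\bar\nu$ it coincides with $\bar\pi^{-1}(\bar{\mathcal B_\nu})$ up to $\bar\mu$-negligible (resp.\ $\bar\mu_e$-negligible) sets, so conditional entropies relative to $\bar\pi$ may be computed using $\mathcal C$. For each finite measurable partition $\alpha$ of $\bar X$, Lemma~\ref{cond-ed} gives $h_{\bar\mu}(\bar T,\alpha|\mathcal C)=\int_E h_{\bar\mu_e}(\bar T,\alpha|\mathcal C)\,d\eta(e)$. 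Taking an increasing sequence of finite partitions $\alpha_n\nearrow\mathcal B_{\bar X}$ (generated by a countable generating family), one has $h_{\bar\mu}(\bar T|\mathcal C)=\lim_n h_{\bar\mu}(\bar T,\alpha_n|\mathcal C)$ and, for $\eta$-a.e.\ $e$, $h_{\bar\mu_e}(\bar T|\mathcal C)=\lim_n h_{\bar\mu_e}(\bar T,\alpha_n|\mathcal C)$, the integrands being nondecreasing in $n$; the monotone convergence theorem then gives $h_{\bar\mu}(\bar T|\mathcal C)=\int_E h_{\bar\mu_e}(\bar T|\mathcal C)\,d\eta(e)$. Chaining this with the identities of the previous paragraph produces $h_\mu(T|\pi)=\int_E h_{\mu_e}(T|\pi)\,d\eta(e)$, which is~(2).

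The main obstacle is the compatibility of the two constructions --- natural extension and ergodic decomposition: one must verify carefully that $\bar\mu_e$ is (isomorphic to) the natural extension of $\mu_e$ and that $(\bar\pi)_*\bar\mu_e=\bar\nu$ for $\eta$-a.e.\ $e$, so that the Abramov--Rohlin reduction carried out for $\mu$ can be repeated verbatim for each ergodic component; everything else is bookkeeping around measurability of the $e$-dependence, the interchange of the supremum over partitions with the integral over $E$, and quoting Lemmas~\ref{GAR}, \ref{na=zero} and \ref{cond-ed}.
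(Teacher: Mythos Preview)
Your argument for part~(1) is essentially the paper's: push the $\mu_e$ forward under $\pi$, observe that each $\pi_*\mu_e$ is $S$-ergodic, integrate to get $\nu$, and use Birkhoff averages of a countable separating family to conclude $\pi_*\mu_e=\nu$ for $\eta$-a.e.\ $e$. The paper phrases this via a set $Y(\nu)$ of $\nu$-generic points, but the content is the same.

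For part~(2) your route differs from the paper's and is considerably heavier. The paper never introduces $\bar Y$ or $\bar\pi$ and never invokes Abramov--Rohlin. Instead it works inside $\bar X$ with the sub-$\sigma$-algebra $\mathcal C=\Pi_X^{-1}\bigl(\pi^{-1}\mathcal B_Y\bigr)$, which is countably generated and satisfies $\bar T^{-1}\mathcal C\subseteq\mathcal C$, and uses only the elementary identity
\[
h_\mu(T,\alpha_i\mid \pi^{-1}\mathcal B_Y)=h_{\bar\mu}\bigl(\bar T,\Pi_X^{-1}\alpha_i\,\big|\,\mathcal C\bigr),
\qquad
h_{\mu_e}(T,\alpha_i\mid \pi^{-1}\mathcal B_Y)=h_{\bar\mu_e}\bigl(\bar T,\Pi_X^{-1}\alpha_i\,\big|\,\mathcal C\bigr),
\]
which holds simply because pulling a partition and a $\sigma$-algebra back along a measure-preserving map preserves conditional entropy. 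One application of Lemma~\ref{cond-ed} to $\Pi_X^{-1}\alpha_i$ and $\mathcal C$, followed by monotone convergence along an increasing generating sequence $\alpha_i\nearrow\mathcal B_X$, finishes the proof. Your version instead lifts to $\bar\pi\colon\bar X\to\bar Y$, uses Lemmas~\ref{GAR} and~\ref{na=zero} to rewrite $h_\mu(T|\pi)$ as $h_{\bar\mu}(\bar T|\bar\pi)$, and then must also do this fiberwise, forcing you to verify that $\bar\mu_e$ is the natural extension of $\mu_e$ and that $(\bar\pi)_*\bar\mu_e=\bar\nu$. These verifications are correct (agreement on the generating algebra $\mathcal D_{\bar X}$ suffices), but they are exactly the ``main obstacle'' you flag, and the paper's argument bypasses them entirely. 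The trade-off: your approach reuses the commutative-diagram machinery already deployed in the proof of Theorem~\ref{MTH1}, whereas the paper's direct approach is shorter and avoids the compatibility check altogether.
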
 \begin{proof}Let
    $\mathcal{B}_\nu$ be the completion of the Borel $\sigma$-algebra $\mathcal{B}_Y$ with respect to
    $\nu$. Let $\mathcal{B}_\mu$ be the completion of the Borel $\sigma$-algebra $\mathcal{B}_X$ with
    respect to $\mu$. Then $\pi:(X,\mathcal{B}_\mu,\mu, T)\rightarrow (Y,  \mathcal{B}_\nu,\nu,S)$  is a
    factor map between  two Lebesgue  systems.

Put $$\bar{X}=\{ \bar{x}=(x_i)_{i\in  \mathbb{Z}}\in  X^{\mathbb{Z}}: \, Tx_{i}=x_{i+1}, i\in \mathbb{Z}\}$$
and  let $\Pi_X:(\bar{X},\bar{\mathcal{B}_\mu},\bar{\mu},\bar{T})\rightarrow (X,\mathcal{B}_X,\mu,T)$ be the
natural extension of $(X,\mathcal{B}_X,\mu,T)$. Let $\bar{\mu}=\int_E \bar{\mu}_e d \eta(e)$  the  ergodic
decomposition of $\bar{\mu}$ and $(\bar{X},(\bar{\mathcal{B}_\mu})_e,\bar{\mu}_e,\bar{T})$ be the corresponding
Lebesgue systems for $e\in  E$ in the ergodic decomposition of $\bar{\mu}$. Since $\Pi_X^{-1}(\mathcal{B}_X)$
is a countably generated sub-$\sigma$-algebra of $\mathcal{B}_{\bar{\mu}}$, one  knows that for $\eta$-a.e.
$e\in E$, $\Pi_X^{-1}(\mathcal{B}_X)\subset (\bar{\mathcal{B}_{\mu}})_e$. For $\eta$-a.e. $e\in E$, letting
$\mu_e(A)=\bar{\mu}_e(\Pi_X^{-1}A)$ for $A\in \mathcal{B}_X$, then $(X,\mathcal{B}_X,\mu_e,T)$ is an ergodic
Polish system and $\mu=\int_E \mu_e d \eta(e)$ is the ergodic decomposition  of
$\mu$.

For $\eta$-a.e. $e\in E$, we define $\nu_e(B)=\mu_e(\pi^{-1}(B))$ for $B\in \mathcal{B}_Y$. Then
$(Y,\mathcal{B}_Y,\nu_e,S)$ is an ergodic  Polish system and $\pi:(X,\mathcal{B}_X,\mu_e,T)\rightarrow
(Y,\mathcal{B}_Y,\nu_e,S)$ is a factor map between two Polish  systems. Thus, the property (1) in the lemma follows  from the following
claim.

\medskip \noindent{\bf Claim:} For  $\eta$-a.e. $e\in E$, $\nu_e=\nu$.

\begin{proof}[Proof of Claim] Since $Y$ is a Polish space, there are  finite Borel-measurable partitions
$\beta_i$, $i\in \mathbb{N}$ of $Y$ such that  $\beta_1\preceq  \beta_2\preceq\cdots$ and $\mathcal{B}_Y$ is
the smallest $\sigma$-algebra containing all $\beta_i$,  $i\in \mathbb{N}$. Let $\mathcal{D}$ be the algebra
generated  by $\{  A:A\in \beta_i \text{ for some }i\in \mathbb{N}\}$. Then  $\mathcal{D}$  is a countable
set and $\mathcal{D}$ generates the $\sigma$-algebra $\mathcal{B}_Y$. Define $$Y(\nu)=\{ y\in Y:
\lim_{n\rightarrow +\infty}  \frac{1}{n}\sum_{i=0}^{n-1}1_A(S^iy)=\nu(A) \text{ for any }A\in \mathcal{D}\}$$
where $1_A$ is the characterization  function of $A$. Since $\mathcal{D}$  is a countable set,  $Y(\nu)\in
\mathcal{B}_Y$  and $\nu(Y(\nu))=1$ by the Birkhoff  ergodic theorem. Since $\int_E \mu_e(\pi^{-1}(Y(\nu))d
\eta(e)=\mu(\pi^{-1}(Y(\nu))=\nu(Y(\nu))=1$, one  has $\mu_e(\pi^{-1}(Y(\nu)))=1$  for  $\eta$-a.e. $e\in E$.
That is, $\nu_e(Y(\nu))=\mu_e(\pi^{-1}(Y(\nu)))=1$  for  $\eta$-a.e. $e\in E$. Thus, to show $\nu_e=\nu$ for
$\eta$-a.e. $e\in E$, it is sufficient to  show that if $e\in E$ such that $(Y,\mathcal{B}_Y,\nu_e,S)$ is an
ergodic  Polish system  and $\nu_e(Y(\nu))=1$, then $\nu_e=\nu$.

Let $e\in E$ such that $(Y,\mathcal{B}_Y,\nu_e,S)$ is an ergodic  Polish system  and $\nu_e(Y(\nu))=1$. Set
$$\mathcal{F}_e=\{  B\in  \mathcal{B}_Y: \nu_e(B)=\nu(B)\}.$$ We want to show
$\mathcal{F}_e=\mathcal{B}_Y$.

By the Birkhoff ergodic theorem, there exists $y\in Y(\nu)$ such that
$$\lim_{n\rightarrow +\infty}
\frac{1}{n}\sum_{i=0}^{n-1}1_A(S^iy)=\nu_e(A) \text{ for any }A\in \mathcal{D}.
$$ By the definition of
$Y(\nu)$, $\lim_{n\rightarrow +\infty}  \frac{1}{n}\sum_{i=0}^{n-1}1_A(S^iy)=\nu(A)$ for  any $A\in
\mathcal{D}$. Hence,   $\nu_e(A)=\nu(A)$ for  any $A\in \mathcal{D}$, which  implies that
$\mathcal{D}\subseteq \mathcal{F}_e$. Note that  $\mathcal{F}_e$ is a monotone class. Thus,   $\mathcal{F}_e=\mathcal{B}_Y$  follows from that  the $\sigma$-algebra generated  by $\mathcal{D}$ is the
monotone class generated by $\mathcal{D}$. This  completes the proof of Claim. \end{proof}

Next,  we prove the property  (2).  Since $X$ is  Polish space, there are finite Borel-measurable partitions
$\alpha_i$ of $X$ such that  $\alpha_1\preceq\alpha_2\preceq\cdots$ and the small $\sigma$-algebra containing
all $\alpha_i$,  $i\in \mathbb{N}$ is  $\mathcal{B}_X$.  Thus for any a $T$-invariant Borel probability
measure $\lambda$ on $(X,\mathcal{B}_X)$ and any sub-$\sigma$-algebra $\mathcal{C}$ of $\mathcal{B}_X$ with
$T^{-1}\mathcal{C}\subseteq \mathcal{C}$, one has \begin{align}\label{p-limit} \lim_{i\rightarrow
+\infty}h_\lambda(T,\alpha_i|\mathcal{C})=h_\theta(T|\mathcal{C}). \end{align}

Since $\Pi_X^{-1}(\pi^{-1}\mathcal{B}_Y)$  is a countably generated
sub-$\sigma$-algebra of $\mathcal{B}_X$ and
$$\bar{T}^{-1}(\Pi_X^{-1}(\pi^{-1}\mathcal{B}_Y))=\Pi_X^{-1}(T^{-1}(\pi^{-1}\mathcal{B}_Y)=\Pi_X^{-1}(\pi^{-1}(S^{-1}\mathcal{B}_Y))\subseteq
\Pi_X^{-1}(\pi^{-1}\mathcal{B}_Y),
$$  one has for  $i\in \mathbb{N}$
\begin{equation}\label{eq-kkky}
h_{\bar{\mu}}(\bar{T},\Pi_X^{-1}(\alpha_i)|\Pi_X^{-1}(\pi^{-1}\mathcal{B}_Y))=\int_E
h_{\bar{\mu}_e}(\bar{T},\Pi_X^{-1}(\alpha_i)|\Pi_X^{-1}(\pi^{-1}\mathcal{B}_Y)d \eta(e) \end{equation} by
lemma \ref{cond-ed}. Note that
$$h_\mu(T,\alpha_i|\pi^{-1}\mathcal{B}_Y)=h_{\bar{\mu}}(\bar{T},\Pi_X^{-1}(\alpha_i)|\Pi_X^{-1}(\pi^{-1}\mathcal{B}_Y))$$
and
$$h_{\mu_e}(T,\alpha_i|\pi^{-1}\mathcal{B}_Y)=h_{\bar{\mu}_e}(\bar{T},\Pi_X^{-1}(\alpha_i)|\Pi_X^{-1}(\pi^{-1}\mathcal{B}_Y))$$
for $\eta$-a.e. $e\in E$. Combing  this with \eqref{eq-kkky}, one has \begin{equation}\label{i-eq-lim}
h_\mu(T,\alpha_i|\pi^{-1}\mathcal{B}_Y)=\int_E h_{\mu_e}(T,\alpha_i|\pi^{-1}\mathcal{B}_Y)d \eta(e).
\end{equation} Let $i\rightarrow +\infty$ in \eqref{i-eq-lim}, we have
$$h_\mu(T|\pi^{-1}\mathcal{B}_Y)=\int_E h_{\mu_e}(T|\pi^{-1}\mathcal{B}_Y)d \eta(e)$$ by \eqref{p-limit} and
the monotone convergence Theorem. The proof of this lemma is complete. \end{proof}


\begin{thebibliography}{100}
%

\bibitem{Ak} E. Akin, {\em Lectures on Cantor and Mycielski sets for dynamical systems,} Chapel Hill Ergodic Theory
    Workshops (Contemporary Mathematics, 356). American Mathematical Society, Providence, RI, 2004, 21-79.

\bibitem{AAG} E. Akin, J. Auslander and E. Glasner, {\em The topological dynamics of Ellis actions,} Mem. Amer.
    Math. Soc.  195(913) (2008).

\bibitem{Al} N. Alon, {\em On the density of sets of vectors,} Discrete Math. 46 (1983) 199-202.

\bibitem{A} L. Arnold, {\em Random Dynamical Systems},
 Springer Monographs in Mathematics. Springer-Verlag, Berlin, 1998. xvi+586 pp.

\bibitem{BLL} P. W.  Bates,  H. Lisei, and K. Lu,  {\em Attractors for stochastic lattice dynamical systems,} Stoch. Dyn. 6(2006), 1-21.

\bibitem{BLW} P. W.  Bates,  K. Lu, and B. Wang,  {\em Random attractors for stochastic reaction-diffusion equations on unbounded domains,}
J. Differential Equations 246(2009), 845-869.

 \bibitem{BenFla95} A. Bensoussan and F. Flandoli,
{\em Stochastic inertial manifold}, Stochastics Stochastics Rep., 53(1--2):13--39, 1995.

\bibitem{BGKM} F. Blanchard, E. Glasner, S. Kolyada and A. Maass, {\em On Li-Yorke pairs,} J. Reine Angew. Math.
    547(2002), 51-68.

\bibitem{BH} F. Blanchard and W. Huang, {\em Entropy sets,weakly mixing sets and entropy capacity,} Discrete and Continuous
Dynamical Systems 20(2008), 275-311.

\bibitem{B92} T. Bogensch\"utz, {\em Entropy, pressure, and a variational principle for random dynamical systems,} Random
Comput. Dynam. 1(1992), 99-116.

\bibitem{B} T. Bogensch\"utz, {\em Equilibrium states for random dynamical systems,} PhD Thesis, Institut fur
    Dynamische Systeme, Universitat Bremen, 1993.

\bibitem{BC} T. Bogenschutz and H. Crauel, {\em The Abromov-Rokhlin Formula,} Lecture Notes in Mathematics, 1514,
    Springer, 1992, 32-35.

\bibitem{C} H. Crauel, {\em Random Probability Measures on Polish Spaces,} Taylor and Francis, London, 2002.

\bibitem{CDF} H. Crauel, A. Debussche, and F. Flandoli, {\em Random
Attractors,} J. Dyn. Diff. Eq. 9(1997), 307-341.

\bibitem{cr-fl} H. Crauel and F. Flandoli,  {\em Attractors for random dynamical
systems,} Probab. Theory Relat. Fields  100(1994), 365-393.

\bibitem{DaPDeb96}
G. Da Prato and A. Debussche,
{\em Construction of stochastic inertial manifolds using backward
  integration}, Stochastics Stochastics Rep., 59(3--4):305--324, 1996.

\bibitem{DaPZab96} G. Da Prato and J. Zabczyk, {\em Ergodicity for Infinite Dimensional Systems}, Cambridge
Univ. Press, Cambridge, 1996.

\bibitem{DLS} J. Duan, K. Lu, and B. Schmalfuss, {\em Invariant manifolds
for stochastic partial differential equations,} { Ann. Probab.}
31 (2003), no. 4, 2109--2135.


\bibitem{EKMS} Weinan. E, K. Khanin, A. Mazel, and Ya. Sinai,
{\em Invariant measures for Burgers equation with stochastic forcing},
Ann. of Math. (2) 151 (2000), no. 3, 877?60.



\bibitem{fl-schm} F. Flandoli and B. Schmalfuss, {\em Random attractors for the 3D
stochastic Navier-Stokes equation with multiplicative noise,}
Stochastics and Stochastic Rep. 59(1996), 21-45.


\bibitem{GLS} M.  Garrido-Atienza,  K.  Lu and B.  Schmalfuss,
{\em Random dynamical systems for stochastic partial differential equations driven by a fractional Brownian motion,}
Discrete and Continuous Dynamical Systems-Series B,  14(2010), 473-493.



\bibitem{G} E. Glasner, {\em Ergodic theory via joinings,} Mathematical Surveys and Monographs, 101. American
    Mathematical Society, Providence, RI, 2003.

\bibitem{GTW} E. Glasner, J.P. Thouvenot and B. Weiss, {\em Entropy theory without a past,} Ergodic Theory Dynam.
    Systems 20 (2000), no. 5, 1355-1370.

\bibitem{HM} M. Hairer and J. Mattingly, {\em Ergodicity of the 2D Navier-Stokes equations
with degenerate stochastic forcing}, Ann. of Math. (2) 164 (2006), no. 3, 993?032.



\bibitem{HY} W. Huang and X. Ye, {\em A local variational relation and applications}, Israel Journal of Mathematics, 151(2006), 237-280.

\bibitem{HYZ} W. Huang, X. Ye and G. Zhang, {\em Relative topological Pinsker factors}, Relative U.P.E. and C.P.E. extension,
Israel Journal of Mathematics, 158(2007), 249-283.


\bibitem{imk-schm} P. Imkeller and B. Schmalfuss, {\em The conjugacy of stochastic
and random differential equations and the existence of global attractors,}
J. Dyn. Diff. Eq. 13(2001), 215-249.

\bibitem{J} K. Jacobs, {\em Ergodic decomposition of the Kolmogoroff-Sinai invariant,} 1963 Ergodic Theory (Proc.
    Internat. Sympos., Tulane Univ., New Orleans, La., 1961) pp. 173-190 Academic Press, New York.

\bibitem{KM} M. G. Karpovsky and V. D. Milman, {\em Coordinate density of sets of vectors},  Discrete Math. 24 (1978),
177¨C184.

\bibitem{K} A. Katok,  {\em Lyapunov exponents, entropy and periodic orbits for diffeomorphisms,} Inst. Hautes Etudes
Sci. Publ. Math. 51(1980), 137-173.

\bibitem{KL-2006} D. Kerr and H. Li, {\em Independence in topological and C$^*$-dynamics}, Math. Ann. 338(2007), 869-926.

\bibitem{K86} Y. Kifer, {\em Ergodic Theory of Random Transformations,} Birkh\"auser, Boston, 1986.

\bibitem{K01} Y. Kifer, {\em On the topological pressure for random bundle transformations,} Amer. Math. Soc.
    Transl. Ser. 2 202(2001), 197-214.

\bibitem{Kifer88}  Y. Kifer,  {\em Random perturbations of dynamical systems}, Progress
in Probability and Statistics, {\bf 16},  Birkhäuser Boston,
Inc., Boston, MA, 1988.

\bibitem{KL} Y. Kifer and P.D. Liu, {\em Random dynamics. Handbook of dynamical systems,} Vol. 1B,  379-499,
    Elsevier B. V., Amsterdam, 2006.

\bibitem{Kol}    A.N. Kolmogorov, {\em New Metric Invariant of Transitive Dynamical Systems and
Endomorphisms of Lebesgue Spaces,} Doklady of Russian Academy of Sciences,
(1958), 119, N5, 861-864.



\bibitem{LeY1} F. Ledrappier and L-S. Young, {\em Dimension formula for random transformations,}
Comm. Math. Phys. 117 (1988), 529--548.

\bibitem{LeY2} F. Ledrappier and L-S. Young, {\em Entropy form
ula for random transformations},  Probab. Theory Related Fields, 80 (1988),  217--240.

\bibitem{LY} T.Y. Li and J.A. Yorke, {\em Period three implies chaos,} Amer. Math. Monthly 82 (1975), 985-992.


\bibitem{LLu} Z. Lian and K. Lu, {\em Lyapunov exponents and invariant manifolds for random dynamical systems in
a Banach space},  Memoirs of AMS. 206(2010) no.967.

\bibitem{LianY1}  Z. Lian and L.-S. Young, {\em Lyapunov exponents, periodic orbits and horseshoes
for mappings of Hilbert Spacesm}, Annales Henri Poincar\'e to appear.

\bibitem{LianY2}  Z. Lian and L.-S. Young,  {\em Lyapunov Exponents, Periodic Orbits and Horseshoes
for Semiflows on Hilbert Spaces}, Journal of American Mathematical Society to appear.

\bibitem{L-p1} P.D. Liu, {\em Dynamics of random transformations: smooth ergodic theory,} Ergodic Theory Dynam.
    Systems 21(2001), 1279-1319.



\bibitem{LQ} P-D. Liu and M. Qian,  {\em Smooth ergodic theory of random
dynamical systems}, Lecture Notes in Mathematics,
1606. Springer-Verlag, Berlin, 1995.

\bibitem{LS} K. Lu and B.  Schmalfuss, {\em Invariant manifolds for stochastic
wave equations,} { J. Differential Equations}, 236
(2007), no. 2, 460--492.

\bibitem{Mane} R. Ma$\tilde n\acute e$, {\em  Lyapunov exponents and stable manifolds for compact
transformations,} {In J. Palis, editor, {\em Geometric Dynamics} pages
522-577, 1983. Springer Lecture Notes in Mathematics, Volume 1007.}




\bibitem{MoS} S-E. A. Mohammed and  M. Scheutzow,
{\em  The stable manifold theorem for stochastic differential
equations.}   Ann. Probab. 27 (1999), no. 2, 615--652.

\bibitem{MZZ} S-E. A. Mohammed,  T. Zhang,  and H.  Zhao.
{\em The stable manifold theorem for semilinear stochastic evolution
equations and stochastic partial differential equations,} {Memoirs of the American Mathematical Society,} 105 pages (to
appear).

\bibitem{P69} W. Parry, {\em Entropy and generators in ergodic theory},
W. A. Benjamin, Inc., New York-Amsterdam 1969.

\bibitem{P81} W. Parry, {\em Topics in ergodic theory,} Cambridge Tracks in Mathematics 75 (Cambridge University
    Press, New York, 1981).


\bibitem{R49} V.A. Rohlin, {\em On the fundamental ideas of measure theory}, Amer. Math. Soc. Translation 1952,
    (1952). no. 71, 55 pp.

\bibitem{R61} V.A. Rohlin, {\em Exact endomorphisms of a Lebesgue space}, Izv. Akad. Nauk SSSR Ser. Mat.  25
    (1961), 499-530.

\bibitem{R} V.A. Rohlin, {\em Lectures on the theory of entropy of transformations with invariant measures},
    Russian Math. Surveys 22(5) (1967), 3-56.


\bibitem{Ruelle1} D. Ruelle, {\em Characteristic exponents for a viscous
fluid subjected to time dependent forces}, Comm. Math. Phys. 93(1984), 285-300.

\bibitem{Ruelle2} D. Ruelle,
{\em  Characteristic exponents and invariant manifolds in Hilbert space},
{ Ann. of Math.} {115} (1982), no. 2, 243--290.


\bibitem{Sau} N. Sauer, {\em On the density of families of sets,} J. Combinatorial Theory Ser. A 13 (1972), 145-147.



\bibitem{SchFla} K-U. Schauml\"offel and F. Flandoli,  {\em A multiplicative ergodic
theorem with applications to a first order stochastic hyperbolic
equation in a bounded domain.} { Stochastics Stochastics
Rep.} 34 (1991), no. 3-4, 241--255.

\bibitem{Schm97c} B. Schmalfuss,
{\em The random attractor of the stochastic {L}orenz system},
ZAMP, 48:951--975, 1997.


\bibitem{Sh} S. Shelah, {\em A combinatorial problem; stability and order for models and theories in infinitary languages},
Pacific J. Math. 41 (1972), 247-261.

\bibitem{Sinai}Ya. G. Sinai, {\em On a weak isomorphism of transformations with invariant measure},
Mat. Sb. (N.S.), 63 (105)1964, 23¨C42, .

\bibitem{Sinai2}Ya. G. Sinai, {\em On the Notion of Entropy of a Dynamical System,} Doklady of
Russian Academy of Sciences, (1959), 124, 768-771.

\bibitem{Thieu} P. Thieullen,
{\em Fibres dynamiques asymptotiquement compacts-exposants de
Lyapunov. Entropie. Dimension,} { Ann. Inst. H. Poincar$\acute e$, Anal. Non lin$\acute
e$aire,} 4(1):49-97, 1987.

\bibitem{UvN} S.M. Ulam  and J. on Neumann,
{\em  Random ergodic theorems,}
{Bull. Amer. Math. Soc.} {51}(1945), 660.

\bibitem{W} P. Walters, {\em An Introduction to Ergodic Theory,} Springer, Berlin, 1982.




\bibitem{ZGH} G.H. Zhang, {\em  Relative entropy, asymptotic pairs and chaos,} J. London Math. Soc. (2) 73 (2006),
no. 1, 157-172.

\end{thebibliography}
\end{document}